\theoremstyle{plain}
\newtheorem{theorem}{Theorem}[section]
\newtheorem{corollary}[theorem]{Corollary}
\newtheorem{lemma}[theorem]{Lemma}
\newtheorem{observation}[theorem]{Observation}
\newcommand{\vast}{\bBigg@{4}}
\newcommand{\Vast}{\bBigg@{5}}
\theoremstyle{definition}
\newtheorem{remark}[theorem]{Remark}
\newtheorem*{fact*}{Fact}
\newtheorem*{question*}{Question}
\def\namedlabel#1#2{\begingroup
    #2%
    \def\@currentlabel{#2}%
    \phantomsection\label{#1}\endgroup
}
\newcommand{\N}{{\mathbb N}}
\newcommand{\Prob}{\mathbb{P}}
\newcommand{\eps}{\varepsilon}
\newcommand{\cA}{\mathcal A}
\newcommand{\cE}{\mathcal E}
\newcommand{\cF}{\mathcal F}
\newcommand{\cG}{\mathcal G}
\newcommand{\cX}{\mathcal X}
\newcommand{\szBsn}[1]{\mathfrak{\vtwo}_{1}(#1)}       
\newcommand{\vone}{A}       
\newcommand{\sone}{\mathfrak{\vone}}
\newcommand{\vtwo}{B}      
\newcommand{\stwo}{\mathfrak{\vtwo}} 
\newcommand{\vthree}{C}      
\newcommand{\sthree}{\mathfrak{C}}
\newcommand{\ssthree}[2]{\mathfrak{\vthree}_{#1}(#2)} 
\newcommand{\qc}{\Lambda} 
\newcommand{\onemp}{q} 
\newcommand{\indic}[2]{\mathds{1}_{#2\in #1}}                                
\newcommand{\basin}{B_{1}}
\title{\scshape Label propagation on binomial random graphs}
\author[1]{Marcos Kiwi\footnote{Marcos Kiwi has been partially supported by grant GrHyDy ANR-20-CE40-0002 and BASAL funds for centers of excellence from ANID-Chile (FB210005).}}
\author[2]{Lyuben Lichev\footnote{Lyuben Lichev has been supported by the Austrian Science Fund (FWF) grant 10.55776/ESP624.}}
\author[3]{Dieter Mitsche\footnote{Dieter Mitsche has been partially supported by grant GrHyDy ANR-20-CE40-0002 and by Fondecyt grant 1220174.}}
\author[4]{Pawe\l{} Pra\l{}at\footnote{Pawe\l{} Pra\l{}at has been partially supported by NSERC Discovery Grant. Part of this work was done while the author was visiting the Simons Institute for the Theory of Computing.}}
\affil[1]{Univ.~Chile, Santiago, Chile}
\affil[2]{TU Wien, Vienna, Austria}
\affil[3]{IMC, Pontif\'{i}cia Univ. Cat\'{o}lica, Santiago, Chile}
\affil[4]{Toronto Metropolitan University, Toronto, Canada}
\begin{document}

\maketitle

\begin{abstract}
We study the behavior of a label propagation algorithm (LPA) on the Erd\H{o}s-R\'enyi random graph $\mathcal{G}(n,p)$. Initially, given a network, each vertex starts with a random label in the interval $[0,1]$. Then, in each round of LPA, every vertex switches its label to the majority label in its neighborhood (including its own label). At the first round, ties are broken towards smaller labels, while at each of the next rounds, ties are broken uniformly at random. The algorithm terminates once all labels stay the same in two consecutive iterations. LPA is successfully used in practice for detecting communities in networks (corresponding to vertex sets with the same label after termination of the algorithm). 
Perhaps surprisingly, LPA's performance on dense random graphs is hard to analyze, and so far convergence to consensus was known only when $np\ge n^{3/4+\eps}$, where LPA converges in three rounds.
By defining an alternative label attribution procedure which converges to the label propagation algorithm after three rounds,
a careful multi-stage exposure of the edges allows us to break the $n^{3/4+\eps}$ barrier and show that, when $np \ge  n^{5/8+\eps}$, a.a.s.\ the algorithm terminates with a single label. 
Moreover, we show that, if $np\gg n^{2/3}$, a.a.s.\ this label is the smallest one, whereas if $n^{5/8+\eps}\le np\ll n^{2/3}$, the surviving label is a.a.s.\ not the smallest one. 
En passant, we show a presumably new monotonicity lemma for Binomial random variables that might be of independent interest.
\end{abstract}

\noindent
Keywords: label propagation algorithm, binomial random graph, majority rule, voter model, threshold
\noindent
MSC Class: 05C80, 60C05, 05D40

\section{Introduction}

In this paper, we analyze a version of a popular unsupervised learning algorithm for finding communities in complex networks called \emph{label propagation algorithm}. In the specific instance we consider, henceforth referred to as LPA, each vertex starts with a random label in the interval $[0,1]$. The algorithm is completely determined by the relative order of the labels. Thus, as long as they are all different from each other, the exact label values are not relevant. Since this assumption is satisfied with probability 1 for every finite graph, we may (and do) assume for convenience that the initial labels coincide with the indices of the vertices, that is, for all $i\in [n]=\{1,\ldots,n\}$, vertex $v_i \in V$ starts with label $i$. Then, in each round of the algorithm, every vertex switches its label to the majority label in its neighborhood (including its own label). Moreover, at the first round, ties are broken towards smaller labels, while at each of the next rounds, ties are broken uniformly at random. (Note that the first round has a special role since, in the beginning, every label is represented only once.) The algorithm ends once the process converges (that is, once no more changes are made at some round) or some predefined maximum number of iterations is reached.  Intuitively, the algorithm exploits the fact that a single label can quickly become dominant in a densely connected collection of vertices, but will not rapidly propagate through a sparsely connected region. Hence, labels will likely get trapped inside densely connected vertex classes. Vertices that end up with the same label when the algorithm stops are considered part of the same community. Among the advantages of LPA, compared to other algorithms, is the scant amount of a priori information it needs about the network structure (no parameter is required to be known beforehand), its efficient distributed implementation, simplicity, and success in practice.

Label propagation algorithms have often been used to detect communities~\cite{raghavan2007near, gregory2010finding}; for more background, see the surveys~\cite{yang2016comparative,harenberg2014community,bedi2016community} or any book on mining complex networks such as~\cite{kaminski2021mining} or~\cite{newman2018networks}. Despite their popularity and the fact that their theoretical analyses were identified as an important research question~\cite{leung2009towards,cordasco2012label,barber2009detecting}, there are only a few theoretical papers in this area published so far. As observed in~\cite{cruciani2018metastability}, a  mathematical analysis is challenging because of ``the lack of techniques for handling the interplay between the non-linearity of the local update rules and the topology of the graph''.

Mathematically, the class of label propagation algorithms has many parallels with models of opinion exchange dynamics. These models have been proposed in order to improve our understanding of different social, political and economical processes and found applications in the fields of distributed computing and network analysis. Typically, opinion exchange dynamics assume that individual agents learn by observing each other's actions (the clearest example being perhaps learning on financial markets). One interesting question within this framework is whether consensus (that is, agreement of all agents) is eventually reached.

Bayesian type models are among the most sophisticated opinion exchange models. There, the action of each individual is based on maximizing the expectation of some utility function depending on the information available at some point, see the nice survey~\cite{mossel2017opinion}. Amid the most famous and mathematically interesting models are the deGroot model (see~\cite{DeGroot} for more details), where the basic idea is that individuals either have opinion $0$ or $1$, and constantly update their opinion according to the (possibly weighted) average of their neighbors; in the voter model, individuals again have binary opinions, and at each step, everyone chooses one neighbor (according to possibly non-uniform rules) and  adopts the opinion of this neighbor (see~\cite{Clifford, Holley}); in majority dynamics, individuals have binary or non-binary opinions, and at each step, everyone adapts to the majority opinion of its neighbors (with different tie-breaking mechanisms), see for example~\cite{Goles}). Label propagation algorithms are a special case of non-binary majority dynamics. 

In~\cite{KPS13}, Kothapalli, Pemmaraju, and Sardeshmukh initiated the mathematically formal analysis of a specific variant of label propagation algorithms. More precisely, they proposed to study the performance of the procedure on the \emph{stochastic block model (SBM)}, a random graph model that,  in its simplest form, partitions the vertices of a graph into $k$ classes and connects vertices between and within different classes independently according to different probabilities -- typically with higher density within vertex classes. When $k=1$, the stochastic block model corresponds to  the \emph{binomial random graph} which, formally speaking, is the distribution $\mathcal G(n,p)$ over the class of graphs $G$ on $n$ vertices with vertex set $V$ in which every pair~$\binom{V}{2}$ appears independently as an edge in $G$ with probability~$p$. Note that most results for the binomial random graph are asymptotic in nature, and $p=p(n)$ may (and usually does) tend to zero as $n$ tends to infinity. We say that $\mathcal G(n,p)$ has some property \emph{asymptotically almost surely} or \emph{a.a.s.}\ if the probability that $\mathcal G(n,p)$ has this property tends to $1$ as $n$ goes to infinity. For a detailed treatment of this model, see for example~\cite{JLR00,KF16,Bol01}. As we shall see, for the range of the parameter $p$ we investigate in this paper, the binomial random graph has the property that LPA converges quickly a.a.s. Clearly, proving fast convergence of LPA on $\mathcal G(n,p)$ to a configuration with a single label for a wide range of values of $p$ would be a strong indication of the strength of the procedure.

The authors of~\cite{KPS13} considered the variant of label propagation algorithms where ties are always broken towards smaller labels. They gave a rigorous analysis of this variant and claimed that for arbitrarily small $\eps > 0$ and $np \ge n^{3/4+\eps}$, a.a.s.~after only two iterations, all vertices in $\mathcal G(n,p)$ receive label~1. (In fact, a careful checking of their proof shows that three iterations are required.) 
They also conjectured that there is a constant $c > 0$ such that for all $np \ge c \log n$, their version of the algorithm a.a.s.\ terminates on $\mathcal G(n,p)$, and when it does, all vertices carry the same label. This conjecture was then proved wrong in~\cite{KLPSS19} (see~\cite{PhDThesis} for slightly more details) where the authors showed that there exists $\eps > 0$ such that, for any $np \le n^{\eps}$, the procedure a.a.s.\ terminates on $\mathcal G(n,p)$ in a configuration where more than one label is present. Simulations reported in~\cite{PhDThesis,KLPSS19} suggest that the behavior of the process changes around $np = n^{1/5}$.

Coincidentally, several recent articles have studied the binary case of majority dynamics when the underlying graph is $\mathcal G(n,p)$ and initially every vertex chooses one of two labels with equal probability (in contrast to labels from an interval as in LPA). The question considered is whether all vertices converge to the same label and, if so, how many rounds it takes. Benjamini, Chan, O'Donnell, Tamuz and Tan~\cite{BCO2016} showed that if $p=\Omega(1/\sqrt{n})$, then $\mathcal G(n,p)$ is such that with probability at least $0.4$ (over the choice of the random graph and the initial choice of vertex labels), convergence to the most popular label happens after four rounds. In fact, they conjectured that this holds with high probability. The conjecture was recently positively resolved by Fountoulakis, Kang and Makai~\cite{FKM20} (see also~\cite{sah2021majority,Zeh20,TV20}).
Also, see~\cite{CN23} for recent work on majority dynamics in $\scalebox{}{}G(n,p)$ in the case where vertices are initially assigned one of $k=O(1)$ labels uniformly at random.

As it is already implicit in~\cite{KPS13}, the gist of the analysis of LPA on SBM is to understand the circumstances under which LPA identifies each block. We are not aware of any direct translation of results for LPA over $\mathcal G(n,p)$ (that is, LPA over SBM with one block) to results for LPA on SBM. However, we believe that most of the ideas and arguments we rely on carry over to the SBM setting, albeit at the cost of complicating notation, lengthier calculations, and negatively impacting clarity of exposition. Moreover, SBM has community structure and so algorithms typically have no problem with finding communities unless one is close to the detectability threshold. In contrast, our goal is to better understand communities that are formed by pure randomness and how LPA variants react to them. This, we believe, is more challenging and of interest in its own right. Thus, we restrict our study to the case where the underlying network is $\mathcal G(n,p)$ (equivalently SBM with $k=1$ block).

We enlarge the range of values of $p$ for which (a.a.s.) LPA converges to a configuration containing a single label.
Specifically, we show that a.a.s.\ LPA identifies $\mathcal G(n,p)$ as a single community. To achieve this, we need to overcome significant technical obstacles. Before discussing them, we first formally state our main contribution and provide an overview of its proof.

\subsection{Main results}

The following theorem formally states the main result of our paper. 

\begin{theorem}\label{thm 1}
Suppose that $\eps\in (0,1/24)$ and $n^{5/8+\eps}\le np \ll n$. Then, a.a.s.\ after five rounds of the process, all vertices carry the label that was most represented after the first round. Moreover,
\begin{itemize}
    \item if $n^{2/3}\ll np\ll n$, then a.a.s.\ this label is $1$,
    \item if $n^{5/8+\eps}\le np\ll n^{2/3}$, then a.a.s.\ this label is different from $1$.
\end{itemize}
\end{theorem}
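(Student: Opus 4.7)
The plan is to first identify, in each regime, the label most represented after round 1, and then to show that this label takes over within four further rounds.

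\textbf{Round 1 analysis.} For $i\in[n]$ let $X_i$ be the number of vertices bearing label $i$ after round 1; since ties are resolved toward the smaller label, $X_i$ counts the vertices whose smallest-indexed neighbor (including itself) is $v_i$. A direct computation gives $\E[X_i]=(1-p)^{i-1}(1+(n-i)p)$, and standard Chernoff bounds pin each $X_i$ within $O(\sqrt{np\log n})$ of its mean. For the first bullet, when $np\gg n^{2/3}$, the systematic gap $\E[X_1]-\E[X_2]\asymp np^2$ dominates the fluctuation scale $\sqrt{np}$, so a union bound delivers $X_1>X_i$ for every $i\ge 2$; hence the winner is label $1$. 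For the second bullet, when $n^{5/8+\eps}\le np\ll n^{2/3}$, one has $np^2\ll\sqrt{np\log n}$, and the winner must be selected from the $\Theta(1/(p\sqrt{np}))=n^{\Theta(1)}$ ``near-optimal'' labels $i$ satisfying $\E[X_i]\ge np-C\sqrt{np\log n}$; an extreme-value-type argument on the (approximately jointly independent) family $(X_i)_i$ then shows that the maximum is attained at some $i\ne 1$ a.a.s.

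\textbf{Rounds 2--5 via multi-stage exposure.} Let $\ell$ be the winning label from round 1. Choose $K=C\log n/p$ and reveal only the edges incident to $\{v_1,\ldots,v_K\}$ in a first stage: this alone determines each round-1 label $L_1(v_j)=\min\{k\le K:v_k\sim v_j \text{ or } k=j\}$, and hence every set $S_i=\{v:L_1(v)=i\}$. The edges inside $\{v_{K+1},\ldots,v_n\}$ remain i.i.d.\ Bernoulli$(p)$ and can be split into four independent batches to be exposed round by round. In round 2, the count of label $i$ in the neighborhood of a vertex $v\in\{v_{K+1},\ldots,v_n\}$ is, up to negligible boundary terms, a $\mathrm{Bin}(|S_i|,p)$ variable; since $|S_\ell|=\max_i|S_i|$, label $\ell$ has the strictly largest expected count. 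A two-sided stochastic comparison of Binomials---presumably where the ``presumably new monotonicity lemma'' announced in the abstract enters---should show that $\ell$ wins at $v$ with probability noticeably higher than any fixed rival, and summing over $v$ pushes $|S_\ell^{(2)}|$ strictly above the size of every other round-2 class. Once this strict lead is in hand, in each subsequent round the neighborhood count of $\ell$ is a Binomial whose mean exceeds that of every competitor by far more than its standard deviation, so the exceptional set contracts to a vanishing fraction by round 3 and to the empty set by round 5.

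\textbf{Main obstacle.} I expect the critical difficulty to lie in round 2 near $np\sim n^{5/8+\eps}$: as many as $n^{\Theta(1)}$ labels may have round-1 counts within $O(\sqrt{np})$ of one another, so the advantage of $\ell$ over its closest competitors is tiny compared with the Binomial noise in a single neighborhood. Handling this gap calls for two-sided rather than one-sided Binomial estimates, which is why it is natural to introduce an alternative label-attribution procedure (easier to analyze and coinciding with LPA by round 3) and to establish a new monotonicity lemma for Binomial random variables, as announced in the abstract.
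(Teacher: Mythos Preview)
Your high-level picture is close to the paper's, but the proposal contains one structural error and one substantial omission.

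\medskip

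\textbf{The ``four independent batches'' idea does not work.} LPA evolves deterministically on a \emph{fixed} graph: the edges used in round~3 are the same edges used in round~2. You cannot partition the edges inside $\{v_{K+1},\ldots,v_n\}$ into batches and spend one batch per round; once you expose an edge to decide a round-2 label, that same edge influences rounds~3--5 and is no longer random. The paper's solution is more delicate and is a one-shot trick, not an iterable one: it defines an auxiliary procedure (ALAP) in which the round-2 label of every vertex is determined \emph{only} from its edges to the basins $B=\bigcup_{\ell\le 2k}B_1(\ell)$, not from edges to other vertices in $C=V\setminus(A\cup B)$. This leaves the graph on $C$ completely unexposed going into round~3, and since $|B|=o(n)$ and the round-2 labels in ALAP a.a.s.\ agree with LPA for all but $O((\log n)/p)$ vertices, a single Chernoff step over the fresh $C$-edges shows label~$\ell_1$ wins throughout $C$ at round~3. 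Rounds~4--5 are then handled by a deterministic ``$0.9n$ vertices already agree'' argument that needs no further independence.

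\medskip

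\textbf{The round-2 step in the second regime needs more than a win probability.} You correctly identify the bottleneck: when $np\ll n^{2/3}$ there are $n^{\Theta(1)}$ basins within $O(\sqrt{np})$ of the maximum, so the per-vertex advantage of~$\ell_1$ is tiny. But to push label~$\ell_1$ through round~2 the paper needs a \emph{quantitative lower bound} on the gap $\mathfrak{B}^{(1)}-\mathfrak{B}^{(2)}$ between the largest and second-largest basin sizes, not merely the identity of the winner; this is obtained via a coupling with independent binomials and a careful order-statistics argument (the gap is shown to be at least $(np)^{1/2-o(1)}$). Moreover, vertices in $B$ are themselves problematic: a vertex $u\in B_1(\ell)$ already has one exposed edge to $v_\ell$, giving label~$\ell$ a one-unit head start in $u$'s round-2 count, and this bias is \emph{not} negligible at the relevant scale. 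The paper handles this by separately bounding, for each $\ell$, the number of vertices in $B_1(\ell)$ that retain label~$\ell$, and showing this surplus is dominated by the $\ell_1$-surplus created in $C$. Your sketch does not address either of these points, and the monotonicity lemma (which you correctly anticipate) is used precisely to compare win probabilities of~$\ell_1$ versus a fixed rival~$\ell$ \emph{after} conditioning on the maximum count among all other labels---a conditioning your proposal does not mention.
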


We note that the first point of the theorem (that is, when $n^{2/3}\ll np\ll n$) is valid also if ties are always broken towards the smaller label, as in~\cite{KPS13}. 

\begin{remark}\label{rem:intermediate}
In parallel with the proof of the first part of Theorem~\ref{thm 1}, we show that, when $np = \Theta(n^{2/3})$, the winning label is 1 with probability bounded away from 0 and 1 simultaneously.
\end{remark}

\smallskip

Interestingly, in the influential paper introducing label propagation as a procedure for community detection, Raghavan, Albert and Kumara~\cite{raghavan2007near} state that ``although one can observe [from simulations on real-world networks] the algorithm beginning to converge significantly after about five iterations, the mathematical convergence is hard to prove''. Our contribution rigorously establishes, for an expanded range of values of $p$, that the empirically determined at most five iterations observed by Raghavan, Albert and Kumara are sufficient for a specific variant of LPA to identify $\mathcal G(n,p)$ as a single community.\footnote{Note however that the family of underlying networks we consider, that is, $\mathcal G(n,p)$ for $n^{5/8+\epsilon}\ll np\ll n$, are not directly comparable to specific instances of real world networks as those considered in~\cite{raghavan2007near}. Moreover, in the latter, it is reported that simulations converge in $5$ rounds but detect many communities (whose size is empirically observed to follow a power law distribution). From the report on the simulations, it is impossible to ascertain what the intra-block edge-density among the detected communities is.} We believe that the insight gained by our analysis might be useful in the study of label propagation algorithms as well as opinion dynamic models.

\subsection{Outline of the proof} 

On a high level, the main technical contribution of our paper is an in-depth analysis of an exploration process done in several stages. We will explore only a subset of possible edges at each step, thereby leaving independence for subsequent steps, while at the same time taking the same decisions as the original LPA. More specifically, in both regimes of $p$ considered in Theorem~\ref{thm 1}, we first ensure that a.a.s.\ only at most $k = \lceil 15p^{-2}(n^{-1}\log n)^{1/2}\rceil$ labels are carried by more than one vertex after the second round. We partition the set of vertices into three \emph{levels}: $A$, consisting of the vertices $v_1, \ldots, v_{2k}$ that initially carry labels $1,\ldots,2k$, respectively, $B$, consisting of all neighbors of vertices in $A$ outside $A$, and $C$, consisting of all other vertices. Then, for every label $\ell\in [2k]$, we call \emph{basin} of $v_{\ell}$ the set of vertices $B_1(\ell)\subseteq B$ connecting to $v_{\ell}$ but not connecting to any of $v_1, \ldots, v_{\ell-1}$.

When $n^{2/3}\ll np\ll n$, we show that a.a.s.\ the basin of vertex $v_1$ is the largest one, and we estimate the difference between its size and the size of the $\ell$-th basin for all $\ell\in [2,2k]$. Then, at the second round, we design a vertex labeling procedure for the vertices in $B$ and in $C$ based only on the edges incident to $A\cup B$, which (thanks to the fact that essentially, a.a.s.\ only the labels in $[k]$ matter after two rounds) a.a.s.\ attributes the same labels as the algorithm. This procedure has the advantage of leaving all edges between vertices in $C$ unexposed, which is then used in the third round. We show that the difference between $|B_1(1)|$ and the remaining basin sizes is amplified in $C$ after the second round, that is, the difference between the number of vertices in $C$ with label 1 and those with any other given label is of larger order than $|B_1(1)|-\max_{\ell\in [2,2k]}|B_1(\ell)|$. In fact, we ensure that a.a.s.\ this difference becomes so large that after the third round, all vertices in $C$ carry label 1. Note that the conclusion of this last point is made possible by the (crucial) fact that edges between vertices in $C$ were not exposed before, and therefore the graph induced by $C$ remains distributed as $\mathcal G(|C|, p)$. Finally, since a.a.s.\ $|C| = (1-o(1))n$, it is easy to conclude that two more rounds are sufficient to end up in a configuration with all vertices carrying label 1. In the case when $np=\Theta(n^{2/3})$, a similar analysis (conducted in parallel with the proof for the regime $n^{2/3}\ll np\ll n$) shows that a.a.s.\ we end up in a configuration with all vertices carrying a label following some non-trivial distribution on the positive integers.

The regime $n^{5/8+\eps}\le np\ll n^{2/3}$ is more complicated to analyze. Although the global strategy remains the same, there are several additional technical difficulties. 

Firstly, the largest basin now is that of $v_{\ell_1}$ for some $\ell_1\in [2k]$ that is a.a.s.\ different from 1. To analyze the size of $B_1(\ell_1)$ and the difference with the sizes of the remaining basins, we do a careful stochastic approximation of all basin sizes with independent binomial random variables. This step additionally ensures that a.a.s.\ $\ell_1 = o(k)$.

Moreover, differences between basin sizes are typically smaller than before. As a result, the analysis of the vertex labeling procedure in $B$ similar to the one in the first regime is less direct. Roughly speaking, it is divided into two parts: for any fixed $\ell\in [2k]\setminus \{\ell_1\}$, we first count the number of vertices in $B\setminus (B_1(\ell)\cup B_1(\ell_1))$ that get a label among $\{\ell, \ell_1\}$ at the second round. We show that a.a.s.\ for every choice of $\ell$, the majority of these vertices get label $\ell_1$. Then, we prove that a.a.s.\ for every $\ell$ as above, the number of vertices in $B_1(\ell)$ that do not change their label at the second round is small. Thus, despite the fact that this allows for more vertices of label $\ell$ than those with label $\ell_1$ in $B$ after the second round, the surplus of vertices with label $\ell_1$ in $C$ after the second round remains of larger order, and therefore this allows the spread of label $\ell_1$ among all vertices in $C$ after the third round. The proof is then completed as before.

\subsection{Technical contributions}
%
As mentioned above, it has been recognized that the analysis of label propagation algorithms involves some non-trivial mathematical challenges. The first and foremost, technical complications arise from the deterministic evolution (except for the eventual tie breaking rules) of the process once the graph and the initial label assignment are fixed (the former being much more challenging to deal with than the latter). One way of bypassing these obstacles is to analyze a process in which the supporting graph is resampled anew at the start of each round (see for example~\cite{tamir22}). This significantly simplifies the analysis but is unrelated to our underlying motivation, which is to contribute to the rigorous understanding of when label propagation type algorithms succeed in correctly and efficiently identifying communities.

An important but technically demanding result which, we believe, could have further applications, is Lemma~\ref{lem:bis}.
It roughly says that, for two independent binomial random variables $X \in \mathrm{Bin}(m,p)$ and $Y \in \mathrm{Bin}(n,p)$ with $m > n$, the ratio of the probabilities that~$X>M$ and $Y>M$ for some threshold $M\in [-1,n]$ is an increasing function of~$M$ (the formal statement is a bit more complicated, as it needs to keep track of our tie-breaking conventions for the attribution of labels).
Furthermore, we propose several couplings that facilitate dealing with intrinsic dependencies inherent to the analysis of label propagation variants. For instance, in Lemma~\ref{lem approx}, the random variables~$(\stwo_1(\ell))_{\ell=1}^{2k}$ (that represent, roughly speaking, the number of vertices that get label~$\ell\in [2k]$ after the first round) are coupled with a sequence of independent binomial random variables~$\mathrm{Bin}(z_\ell,p)$ whose mean is a second order approximation of the expectation of $\stwo_1(\ell)$ (and a decreasing function of $\ell$). Again, in order to deal with dependencies, in Lemma~\ref{claim 4} we introduce a decoupling technique that conditions on whether a specific edge $uv$ is present or not in $\mathcal G(n,p)$ in order to derive (via the second moment method) a.a.s.~bounds for the difference between two particular random variables (both measurable with respect to the edges of $\mathcal G(n,p)$).

The determination (via coupling) of the number of vertices that get label $\ell\in [2k]$ after the first round leads to questions concerning the asymptotic distribution of the maximum of independent binomials whose mean has a negative drift. Unfortunately, we could not find, among prior results concerning order statistics of independent but not identically distributed random variables, a result useful to us. In contrast, the analogous question for i.i.d.\ random variables is significantly simpler and extensively studied but not adapted to our setting. To address these questions, we develop an approach that first determines the asymptotic behavior of the maximum (when $\ell$ varies over an interval of integers $I$) of the binomial random variables $\mathrm{Bin}(z_\ell,p)$ again with mean a decreasing function of $\ell$ (see Lemma~\ref{lem:max} and Remark~\ref{rem:generalization}). By comparing the obtained asymptotic distributions for different choices of the interval~$I$, we can identify specific intervals for~$\ell$ where the first and second maximum of the collection of binomials is attained (see Corollary~\ref{cor second max}), and estimate the gap between them (see Lemma~\ref{lem fst snd max}).

Finally, an arguably less significant technical contribution but still worth mentioning, is the derivation of several inequalities concerning the density and distribution function of the difference between two sums (over different number) of i.i.d.\ Bernoulli random variables (see Lemma~\ref{lem difference binomials}). The novelty here is the use of Berry-Esseen's and Slud's inequalities.

\section{Preliminaries and Notation}

\subsection{Notation}

We use mostly standard asymptotic notation. Apart from the classical $O$, $\Omega$, $\Theta$ and $o$, for any two functions $f:\mathbb N\to (0,\infty)$ and $g:\mathbb N\to (0,\infty)$, we write $f(n) \gg g(n)$ or $g(n)\ll f(n)$ if $g(n)=o(f(n))$ and $f(n) \sim g(n)$ if $f(n)=(1+o(1))g(n)$.

We use $\log n$ to denote the natural logarithm of $n$. We use the following extension of the notation $[n]=\{1,\ldots,n\}$: For given $a,b\in\N$, $a\leq b$, we let $[a,b]=\{a,\ldots,b\}$. For $a\in\mathbb R$ and $\epsilon>0$, we let $a\pm \epsilon$ denote the interval $[a-\epsilon,a+\epsilon]$. 

For a vertex $v \in V$, we write $N(v)$ for the set of neighbors of $v$ in $\mathcal G(n,p)$, and $N[v] = N(v)\cup \{v\}$ for the closed set of neighbors of $v$. For any $Z \subseteq V$, let also $N(Z) = \bigcup_{v \in Z}\, N(v)$ and $N[Z] = N(Z)\cup Z$. Finally, as typical in the field of random graphs, for expressions that clearly have to be integers, we round up or down without specifying when this choice does not affect the argument.

\subsection{Preliminaries}

The first lemma that we need is a specific instance of Chernoff's bound that we will often find useful (see e.g.\ Theorem~2.1 in~\cite{JLR00}). 

\begin{lemma}\label{lem chernoff}
Let $X \in \mathrm{Bin}(n,p)$ be a random variable with binomial distribution with parameters $n$ and~$p$
and $\varphi:[-1,\infty)\to\mathbb R$ be such that $\varphi(t)=(t+1)\log(t+1)-t$. Then, for all $t\geq 0$,
\begin{align*}
\Prob(X-\mathbb E X\ge t) &\le \exp\left(-\varphi\left(\frac{t}{\mathbb E X}\right)\cdot\mathbb E X\right) \le \exp\left(-\frac{t^2}{2(\mathbb E X + t/3)}\right), \\
\Prob(X-\mathbb E X\le -t) &\le \exp\left(-\varphi\left(-\frac{t}{\mathbb E X}\right)\cdot\mathbb E X\right) \le \exp\left(-\frac{t^2}{2 \mathbb E X}\right).
\end{align*}
\end{lemma}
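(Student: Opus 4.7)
The plan is to apply the standard Cram\'er–Chernoff method. First, for any $\lambda > 0$, Markov's inequality applied to $e^{\lambda X}$ yields
$$\Prob(X - \E X \geq t) \;\leq\; e^{-\lambda(\E X + t)} \, \E[e^{\lambda X}].$$
Writing $X = \sum_{i=1}^{n} X_i$ as a sum of i.i.d.\ Bernoulli$(p)$ variables, the moment generating function factorises as $\E[e^{\lambda X}] = (1-p+pe^{\lambda})^n$, which by the elementary inequality $1+x \leq e^x$ is bounded by $\exp\bigl(np(e^{\lambda}-1)\bigr)$. Substituting gives
$$\Prob(X - \E X \geq t) \;\leq\; \exp\bigl(np(e^{\lambda}-1) - \lambda(np+t)\bigr).$$
Choosing the optimal $\lambda = \log(1 + t/\E X)$, so that $np\,e^{\lambda} = \E X + t$, the exponent rearranges exactly to $-\E X \cdot \bigl((1+t/\E X)\log(1+t/\E X) - t/\E X\bigr) = -\varphi(t/\E X)\cdot\E X$, yielding the first displayed bound.

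For the lower tail, I would rerun the argument with $\lambda > 0$ applied to $e^{-\lambda X}$, using the same MGF computation. The analogous optimum $\lambda = -\log(1 - t/\E X)$ (valid for $t < \E X$; the case $t \geq \E X$ is trivial since the probability is zero) produces $\exp\bigl(-\varphi(-t/\E X)\cdot \E X\bigr)$, matching the left-hand inequality on the second line.

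To derive the Bernstein-type bounds on the right, I would separately verify the two elementary calculus inequalities
$$\varphi(u) \;\geq\; \frac{u^2}{2(1+u/3)} \text{ for } u \geq 0, \qquad \varphi(-u) \;\geq\; \frac{u^2}{2} \text{ for } 0 \leq u \leq 1.$$
Each reduces to showing that a single-variable smooth function and its first derivative vanish at $u=0$ while its second derivative has the appropriate sign; for the lower-tail inequality, for example, the difference $\varphi(-u) - u^2/2$ has second derivative $u/(1-u) \geq 0$ on $[0,1)$, and vanishes together with its first derivative at $u=0$. Substituting $u = t/\E X$ (and noting that the lower-tail bound is vacuous when $t > \E X$) recovers the right-hand inequalities.

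Since this is a classical bound stated verbatim in~\cite{JLR00}, the only real work is bookkeeping: the logarithmic optimisation over $\lambda$ must be carried out without sign errors, and the two auxiliary inequalities on $\varphi$ must be checked on their correct domains. No probabilistic subtlety is required beyond the basic Cram\'er–Chernoff scheme, so I would not expect any significant obstacle.
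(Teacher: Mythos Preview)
Your sketch is correct and is exactly the standard Cram\'er--Chernoff argument. The paper itself does not prove this lemma at all; it simply quotes it with a reference to Theorem~2.1 in~\cite{JLR00}, so there is nothing to compare against beyond noting that you have supplied the textbook derivation the paper omits. One minor quibble: for the upper-tail calculus inequality $\varphi(u)\ge u^2/(2(1+u/3))$, the difference and its first \emph{and} second derivatives all vanish at $u=0$, so ``second derivative has the appropriate sign'' is not quite enough as stated---you need one more differentiation (or an equivalent integral comparison)---but this is a routine bookkeeping point and does not affect the validity of the argument.
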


The following result is a partial converse of Chernoff's bound, stated in terms of the standard normal distribution. To this end, set
\begin{equation*}
\Phi(t) = \int_{-\infty}^{t} \frac{1}{\sqrt{2\pi}}\exp\left(-\frac{x^2}{2}\right) \mathrm{d}x
\qquad \text{ for all $t\in\mathbb R$.}
\end{equation*}
To avoid over-cluttering formulas, henceforth we adopt the typical  convention of denoting $1-p$ by $q$.
\begin{lemma}[see Lemma~2.1 in~\cite{Slu77}]\label{lem slud}
Let $X \in \mathrm{Bin}(n,p)$ be a random variable with binomial distribution with parameters $n$ and 
$p \le 1/4$. Then, for every $t\in [0, n - 2np]$,
$$\Prob(X\ge \mathbb E X + t)\ge 1 -  \Phi\left(\frac{t}{\sqrt{np\onemp}}\right).$$
\end{lemma}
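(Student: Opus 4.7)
The plan is to prove Slud's inequality $\Prob(X\geq np+t)\geq 1-\Phi(t/\sqrt{np\onemp})$ by reducing to integer thresholds, invoking the beta-integral representation of the binomial tail, and performing a pointwise comparison of the resulting integrand against the standard normal density. The assumption $p\leq 1/4$ enters through the sign (and magnitude) of the first-order skewness correction, which is what makes the comparison go in the right direction.

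First, since $\Prob(X\geq np+t)$ is a right-continuous step function in $t$ with jumps only at integer values of $np+t$, while $1-\Phi(t/\sqrt{np\onemp})$ is continuous and strictly decreasing, it suffices to establish the inequality at $t=k-np$ for each integer $k$ with $np\leq k\leq n-np$, reducing the statement to $\Prob(X\geq k)\geq 1-\Phi((k-np)/\sqrt{np\onemp})$. Next I would invoke the classical identity
\[
\Prob(X\geq k) \;=\; \frac{n!}{(k-1)!\,(n-k)!}\int_0^p u^{k-1}(1-u)^{n-k}\,du,
\]
and change variables via $u=p-v\sqrt{p\onemp/n}$ to align the integration with the standardized deviation from the mean. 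Applying Stirling's formula to the combinatorial prefactor rewrites the integrand as a standard-normal density times a correction factor encoding skewness and higher-order discrepancies. Writing $1-\Phi(\cdot)$ as the tail integral of $\phi$ over the same variable, the target inequality reduces to a pointwise comparison between two densities on the relevant interval, which can then be integrated.

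The technical heart --- and the main obstacle --- is to show that this correction factor is at least $1$ uniformly for every $k\in[np,n-np]$. For $|k-np|$ of order $\sqrt{np\onemp}$, an Edgeworth expansion with the first-order skewness correction $(\onemp-p)/(6\sqrt{np\onemp})$ already suffices, since this correction is non-negative exactly when $p\leq 1/2$. In the moderate- and large-deviation regime $|k-np|\gg\sqrt{np\onemp}$, the Edgeworth expansion is no longer quantitatively sharp; one must instead control the ratio $\Prob(X=k+1)/\Prob(X=k)=(n-k)p/((k+1)\onemp)$ against the corresponding ratio of normal densities, and the stronger hypothesis $p\leq 1/4$ is what provides enough slack to maintain the inequality all the way out to $t=n-2np$, which is essentially the extreme of the range on which the binomial tail remains positive.
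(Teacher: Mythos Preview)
The paper does not prove this lemma at all: it is stated with a direct citation to Slud~\cite{Slu77} and used as a black box. So there is no ``paper's own proof'' to compare your attempt against.

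As for your outline itself, the overall architecture---reduce to integer thresholds, invoke the beta-integral representation of the upper tail, standardize the variable, and compare the resulting integrand pointwise to the normal density---is a legitimate route to inequalities of this type, and is in the spirit of Slud's original argument. However, what you have written is a plan rather than a proof. You correctly identify the crux as showing that the correction factor (ratio of binomial-derived integrand to normal density) is uniformly at least $1$ across the whole range $k\in[np,\,n-np]$, and you correctly flag that the Edgeworth heuristic only handles the bulk. But the two hard pieces are left as assertions: (i) in the bulk, saying ``the first-order skewness correction is non-negative for $p\le 1/2$'' is not enough, because Edgeworth comes with an error term of the same order that must be explicitly bounded and shown not to overwhelm the favorable sign; and (ii) in the large-deviation regime, you only state that controlling the ratio $\Prob(X=k+1)/\Prob(X=k)$ against the normal-density ratio ``provides enough slack'' when $p\le 1/4$, without carrying out the comparison or explaining precisely where $1/4$ (rather than some other threshold) enters. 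Both of these require genuine calculations that are absent here. If you want a self-contained argument, you would need to fill in those estimates explicitly; otherwise, citing Slud as the paper does is the appropriate course.
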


The next lemma analyzes the difference of two independent binomial random variables with the same parameter $p$ but slightly different means.

\begin{lemma}\label{lem difference binomials}
Fix $a_1 = a_1(n), a_2 = a_2(n)\in \mathbb N$ and $p =p(n) = o(1)$ such that $1\ll a_2\le a_1$ and $\min\{a_2, a_1-a_2\}p\to \infty$ as $n\to \infty$. Let $X_1\in \mathrm{Bin}(a_1, p)$ and $X_2\in \mathrm{Bin}(a_2, p)$ be two independent random variables. Then, there exists a constant $\zeta\in (0,1/2)$ such that for any fixed constant $M \in \mathbb R$,
\begin{equation}\label{eq:lem 2.3.1}
\mathbb P(X_1-X_2\ge M) \ge \Phi\Big(\frac{(a_1-a_2)p-M}{\sqrt{(a_1+a_2)p\onemp}}\Big)-\frac{2\zeta}{\sqrt{a_2p}}.  
\end{equation}
In particular, for $n$ large enough,
\begin{equation}\label{eq:lem 2.3.2}
\Prob(X_1 - X_2 \ge M) \ge \frac{1}{2} + \frac{1}{5} \min\left\{1, \frac{(a_1-a_2)p}{\sqrt{(a_1+a_2) p}}\right\}.
\end{equation}
Moreover, for any fixed constant $m\in \mathbb Z$,
\begin{equation*}
\Prob(X_1 - X_2 = m) = o \Big( \Prob(X_1 - X_2 \ge m) - \Prob(X_1 - X_2 < m) \Big).
\end{equation*}
\end{lemma}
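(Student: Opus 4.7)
The plan is to derive \eqref{eq:lem 2.3.1} by a direct application of the Berry--Esseen theorem to the centred sum underlying $X_1-X_2$, to obtain \eqref{eq:lem 2.3.2} by splitting on whether the normalised drift $s:=(a_1-a_2)p/\sqrt{(a_1+a_2)p}$ is at least or less than $1$, and to conclude the final asymptotic claim by combining \eqref{eq:lem 2.3.2} with the classical mode bound on the binomial distribution.

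For \eqref{eq:lem 2.3.1}, I will write $X_1-X_2-(a_1-a_2)p$ as a sum of $a_1$ independent centred $\mathrm{Bernoulli}(p)$ and $a_2$ independent centred $-\mathrm{Bernoulli}(p)$ random variables. Its variance equals $(a_1+a_2)p\onemp$, and the sum of third absolute moments is at most $(a_1+a_2)p\onemp$. Berry--Esseen with constant $C_{BE}<1/2$ then yields
\[
\sup_{t\in\mathbb R}\Big|\Prob\big(X_1-X_2\le (a_1-a_2)p+t\sqrt{(a_1+a_2)p\onemp}\big)-\Phi(t)\Big|\le\frac{C_{BE}}{\sqrt{(a_1+a_2)p\onemp}}.
\]
Specialising $t=(M-(a_1-a_2)p)/\sqrt{(a_1+a_2)p\onemp}$ and using $(a_1+a_2)\onemp\ge a_2/2$ (valid for $n$ large since $p=o(1)$) yields \eqref{eq:lem 2.3.1} with any $\zeta$ slightly larger than $C_{BE}/\sqrt 2$, which lies in $(0,1/2)$.

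For \eqref{eq:lem 2.3.2}, note that since $M$ is fixed and $(a_1-a_2)p\to\infty$, the argument of $\Phi$ in \eqref{eq:lem 2.3.1} equals $s(1+o(1))$, while the remainder $2\zeta/\sqrt{a_2p}$ is $o(1)$. If $s\ge 1$, then $\Phi(s(1+o(1)))\ge \Phi(1)-o(1)>1/2+1/3$, which is more than enough. If $s<1$, I first observe that $s<1$ combined with $(a_1-a_2)p\to\infty$ forces $a_1\le 2a_2$ for $n$ large (otherwise $(a_1-a_2)p\ge a_1p/2$ and $(a_1-a_2)p<\sqrt{2a_1p}$ would jointly bound $a_1p$, contradicting $(a_1-a_2)p\to\infty$). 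Hence $\sqrt{(a_1+a_2)/a_2}\le\sqrt 3$, so the Berry--Esseen error is $O(1/\sqrt{(a_1+a_2)p})=O(s/((a_1-a_2)p))=o(s)$. Applying the elementary bound $\Phi(x)-1/2\ge x\phi(1)$ valid for $x\in[0,1]$, with $\phi(1)=e^{-1/2}/\sqrt{2\pi}>1/5$, then gives $\Prob(X_1-X_2\ge M)\ge 1/2+s/5$. This case, where the Berry--Esseen remainder and the main shift vanish at a comparable rate, is where I expect the argument to be the most delicate.

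For the moreover part, I plan to bound the point mass and the gap separately. The classical mode estimate for the binomial gives $\Prob(X_1=j)\le C/\sqrt{a_1p\onemp}$ for every integer $j\ge 0$, and therefore $\Prob(X_1-X_2=m)=\sum_k\Prob(X_2=k)\Prob(X_1=k+m)\le C/\sqrt{a_1p\onemp}=O(1/\sqrt{a_1p})$ by conditioning on $X_2$. Since $X_1-X_2$ is integer-valued, \eqref{eq:lem 2.3.2} applied with $M=m$ yields
\[
\Prob(X_1-X_2\ge m)-\Prob(X_1-X_2<m)=2\Prob(X_1-X_2\ge m)-1\ge \frac{2}{5}\min\!\left\{1,\frac{(a_1-a_2)p}{\sqrt{(a_1+a_2)p}}\right\}.
\]
The ratio of the upper and lower bounds is $O(\max\{1/\sqrt{a_1p},\,1/((a_1-a_2)p)\})$, which tends to $0$ by the assumptions $a_2p\to\infty$ and $(a_1-a_2)p\to\infty$, concluding the proof.
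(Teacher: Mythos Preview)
Your argument is correct and follows essentially the same outline as the paper's proof: Berry--Esseen for \eqref{eq:lem 2.3.1}, a case split on the normalised drift for \eqref{eq:lem 2.3.2}, and the mode bound combined with \eqref{eq:lem 2.3.2} for the final claim.

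The one noteworthy difference is in how Berry--Esseen is invoked. The paper applies it separately to $X_1$ and to $X_2$ (each a sum of i.i.d.\ Bernoullis, so the i.i.d.\ constant $<1/2$ is legitimate), replaces each by an independent standard normal, and then uses that a linear combination of independent normals is normal; this produces the error $\zeta/\sqrt{a_1p}+\zeta/\sqrt{a_2p}\le 2\zeta/\sqrt{a_2p}$. You instead apply Berry--Esseen once to the full sum of $a_1+a_2$ independent (but not identically distributed) centred terms. This is cleaner and gives the slightly sharper error $C_{BE}/\sqrt{(a_1+a_2)p\onemp}$, but note that the best known constant in the non-i.i.d.\ Berry--Esseen theorem is about $0.56$, not below $1/2$ as you wrote. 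This does not affect your conclusion, since you only need $\zeta\approx C_{BE}/\sqrt{2}<1/2$, which holds comfortably; just adjust the parenthetical claim about $C_{BE}$.

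Two further cosmetic points: in the case $s<1$ you apply $\Phi(x)-1/2\ge x\phi(1)$ at $x=s(1+o(1))$, which may slightly exceed $1$; the inequality continues to hold for $x$ in a neighbourhood of $1$ (indeed $(\Phi(1)-1/2)/1\approx 0.341>\phi(1)\approx 0.242$), so this is harmless. And in the case $s\ge 1$, remember to subtract the $o(1)$ Berry--Esseen remainder before comparing with $1/2+1/5$; you do this implicitly, but it is worth stating.
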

\begin{proof}
By the normal approximation of the binomial distribution (see Berry-Esseen's inequality~\cite{Esseen}), if $X\in\mathrm{Bin}(a,p)$, then for all $x\in\mathbb R$,
\[
\Big|\mathbb P\Big(\frac{X-ap}{\sqrt{ap\onemp}}\le x\Big) - \Phi(x)\Big| \leq \frac{\zeta(p^2+q^2)}{\sqrt{ap\onemp}}
\leq \frac{\zeta}{\sqrt{ap}},
\] 
where $0<\zeta<1/2$ is an explicit constant.
Let $Z_1$ and $Z_2$ be two independent and standard normally distributed random variables. Then, 
\[
\mathbb P(X_1-X_2\geq M) 
= \mathbb P\Big(\frac{X_1-a_1p}{\sqrt{a_1p\onemp}}\geq \frac{M+X_2-a_1p}{\sqrt{a_1p\onemp}}\Big)
\ge -\frac{\zeta}{\sqrt{a_1p}}+\mathbb P\Big(Z_1\geq \frac{M+X_2-a_1p}{\sqrt{a_1p\onemp}}\Big).
\]
Since $Z_1\geq \frac{M+X_2-a_1p}{\sqrt{a_1p\onemp}}$ if and only if $Z_1\sqrt{a_1p\onemp}-M+a_1p\ge X_2$, we get 
\begin{align*}
\mathbb P(X_1-X_2\geq M) 
& \ge 
-\frac{\zeta}{\sqrt{a_1p}}
+ \mathbb P\Big(\frac{Z_1\sqrt{a_1p\onemp}-M+(a_1-a_2)p}{\sqrt{a_2p\onemp}}\geq \frac{X_2-a_2p}{\sqrt{a_2p\onemp}}\Big)
\\ &
\ge -\frac{\zeta}{\sqrt{a_1p}}-\frac{\zeta}{\sqrt{a_2p}}
+ \mathbb P\Big(\frac{Z_1\sqrt{a_1p\onemp}-M+(a_1-a_2)p}{\sqrt{a_2p\onemp}}\ge Z_2\Big).
\end{align*} 
Using the fact that $a_1\ge a_2$ and that $Z_1\sqrt{a_1p\onemp}-Z_2\sqrt{a_2p\onemp}$ is a normally distributed random variable with mean $0$ and variance $(a_1+a_2)p\onemp$, we conclude that
\[
\mathbb P(X_1-X_2\ge M) \ge -\frac{2\zeta}{\sqrt{a_2p}}
+ 1-\Phi\Big(\frac{M-(a_1-a_2)p}{\sqrt{(a_1+a_2)p\onemp}}\Big) = \Phi\Big(\frac{(a_1-a_2)p-M}{\sqrt{(a_1+a_2)p\onemp}}\Big)-\frac{2\zeta}{\sqrt{a_2p}},
\]
and so inequality~(\ref{eq:lem 2.3.1}) holds. 

Let $\xi>0$ be a fixed constant. 
Then, if $\frac{M-(a_1-a_2)p}{\sqrt{(a_1+a_2)p\onemp}}\le -\xi$, recalling that $a_2p\to\infty$ as $n\to\infty$, we get that $\mathbb P(X_1-X_2\geq M)\geq 1-1.01\cdot\Phi(-\xi)$.
On the other hand, if $\frac{M-(a_1-a_2)p}{\sqrt{(a_1+a_2)p\onemp}}> -\xi$, recalling that $(a_1-a_2)p\to\infty$ as $n\to\infty$, we get 
\[
1-\Phi\Big(\frac{M-(a_1-a_2)p}{\sqrt{(a_1+a_2)p\onemp}}\Big)\geq \frac12+\frac{e^{-\xi^2/2}}{\sqrt{2\pi}}\frac{|M-(a_1-a_2)p|}{\sqrt{(a_1+a_2)p\onemp}}
\ge \frac12+0.99\cdot\frac{e^{-\xi^2/2}}{\sqrt{2\pi}}\frac{(a_1-a_2)p}{\sqrt{(a_1+a_2)p}}.
\]
Observing that 
\[
\left.\frac{1}{\sqrt{a_2p}}\,\right/\!\frac{(a_1-a_2)p}{\sqrt{(a_1+a_2)p}} = \frac{1}{\sqrt{(a_1-a_2)p}}\sqrt{\frac{2}{(a_1-a_2)p}+\frac{1}{a_2p}} \to 0\quad\text{when $n\to\infty$,}
\]
we obtain that
\[
\mathbb P(X_1-X_2\ge M) \ge \frac12+0.98\cdot\frac{e^{-\xi^2/2}}{\sqrt{2\pi}}\frac{(a_1-a_2)p}{\sqrt{(a_1+a_2)p}}.
\]
Summarizing,
\[
\mathbb P(X_1-X_2\geq M) \geq \min\Big\{1-1.01\cdot\Phi(-\xi),\tfrac12+0.98\cdot\frac{e^{-\xi^2/2}}{\sqrt{2\pi}}\frac{(a_1-a_2)p}{\sqrt{(a_1+a_2)p}}\Big\}.
\]
Taking $\xi=1$, observing that $0.98\cdot e^{-\xi^2/2}/\sqrt{2\pi}\approx 0.2371>\tfrac{1}{5}$ and verifying in a table of values for the cdf of the standard normal distribution that $\Phi(-1)\approx 0.1587$ (hence, $1-1.01\cdot\Phi(-1)\approx 0.8397>\frac12+\tfrac{1}{5}$) establishes the second part of the lemma.

Now, for the last part of the lemma, note that from the first part we get that
\[
\mathbb P(X_1-X_2\geq m)-\mathbb P(X_1-X_2<m)
= 2\mathbb P(X_1-X_2\geq m)-1
\ge \frac{2}{5}\min\Big\{1,\frac{(a_1-a_2)p}{\sqrt{(a_1+a_2)p}}\Big\}.
\]
On the other hand, since the mode $m_*$ of $X_1$ is either $\lfloor (a_1+1)p\rfloor$ or $\lceil(a_1+1)p\rceil-1$, from Stirling's approximation one can deduce that
\[
\mathbb P(X_1-X_2=m)\leq \mathbb P(X_1=m_*)
= (1+o(1))\frac{1}{\sqrt{2\pi a_1p\onemp}}
= O\Big(\frac{1}{\sqrt{(a_1+a_2)p}}\Big).
\]
Since by hypothesis $(a_1-a_2)p\to\infty$ as $n\to\infty$, the last two displayed inequalities yield the last part of the lemma.
\end{proof}

\begin{remark}\label{rem binomials'}
Set $t = a_1 - a_2$. Assuming that $tp=O(1)$ and $a_2p \to \infty$ in Lemma~\ref{lem difference binomials},
it is still possible to establish that
\[
\Prob(X_1>X_2) + \frac12 \cdot \Prob(X_1=X_2)=\frac12+\Omega\Big(\frac{tp}{\sqrt{a_2p}}\Big).
\]
Indeed, let $X'\in\mathrm{Bin}(a_2,p)$ and $Y\in\mathrm{Bin}(t,p)$ be independent random variables.
Since $X'+Y\in\mathrm{Bin}(a_1,p)$ and $X_1 \in\mathrm{Bin}(a_1,p)$, we have 
\[
\Prob(X_1>X_2) + \frac12 \cdot \Prob(X_1=X_2)
\ge \Prob(X'>X_2)+\frac12 \cdot \Prob(X'=X_2)+\frac12 \cdot \Prob(X'=X_2-1)\Prob(Y\ge 1).
\]
The first two terms on the right-hand side above sum up to $\frac12$.
Next, observe that since $tp=O(1)$, we have  $\Prob(Y\ge 1)=1-q^t=\Theta(tp)$.
To conclude, recall that for $m\in a_2 p\pm \sqrt{a_2 pq}$ one has that $\Prob(X'=m)$ and $\Prob(X_2=m+1)$ are $\Omega(1/\sqrt{a_2 p})$, so
\[
\Prob(X'=X_2-1)\geq \sum_{m\in a_2 p\pm\sqrt{a_2 pq}}
\Prob(X'=m)\Prob(X_2=m+1)
= \Omega\Big(\frac{1}{\sqrt{a_2p}}\Big).
\]
\end{remark}

\begin{remark}\label{rem binomials''}
The proof of Lemma~\ref{lem difference binomials} also implies that for every $\eps > 0$ there is a positive integer $N = N(\eps)$ such that as long as $\min\{a_2,a_1-a_2\}p\ge N$ (the other assumptions therein remain), for every integer $M$,~\eqref{eq:lem 2.3.1} and~\eqref{eq:lem 2.3.2} are still satisfied, and moreover
\[\mathbb P(X_1 - X_2 = M)\le \eps|\mathbb P(X_1 - X_2 \ge M) - \mathbb P(X_1 - X_2 < M)|.\]
\end{remark}

We finish this preliminary section with the already mentioned important technical lemma, which may be of independent interest thanks to its general formulation.

\begin{lemma}\label{lem:bis}
Fix integers $n > n'\ge 0$ and a real number $\rho\ge 0$. 
Consider two independent random variables $X\in \mathrm{Bin}(n,p)$ and $Y\in \mathrm{Bin}(n',p)$.
For every integer $M\ge -1$, let
\begin{equation*}
\begin{split}
p_X(M,\rho) & = \mathbb P(X > M, X > Y) + \tfrac{1}{2} \mathbb P(X = Y > M) \\ 
& \qquad +\tfrac{1}{\rho+1} \mathbb P(X = M > Y) + \tfrac{1}{\rho+2} \mathbb P(X = Y = M),
\end{split}
\end{equation*}
and define $p_Y(M,\rho)$ similarly by exchanging $X$ and $Y$.
Then, for every integer $M\in [0,n]$,
\[\frac{p_X(M,\rho)}{p_Y(M,\rho)}\ge \frac{p_X(-1,\rho)}{p_Y(-1,\rho)} = \frac{\mathbb P(X > Y) + \tfrac{1}{2} \mathbb P(X = Y)}{\mathbb P(Y > X) + \tfrac{1}{2} \mathbb P(Y = X)}.\]
\end{lemma}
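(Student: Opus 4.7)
My plan is to establish the stronger monotonicity statement that $M \mapsto r_M := p_X(M,\rho)/p_Y(M,\rho)$ is non-decreasing in $M \ge -1$ (where defined); the lemma then follows by taking $M = -1$. I would proceed by induction on $M$, with the main ingredient being an explicit expansion of the one-step difference. Direct computation from the definition yields
\begin{equation*}
\begin{split}
D_X(M) &:= p_X(M{-}1,\rho) - p_X(M,\rho) \\
&= P(M)\bigl[\tfrac{\rho}{\rho+1}G_Y(M) + \tfrac{\rho}{2(\rho+2)}Q(M)\bigr] + P(M{-}1)\bigl[\tfrac{1}{\rho+1}G_Y(M{-}1) + \tfrac{1}{\rho+2}Q(M{-}1)\bigr],
\end{split}
\end{equation*}
where $P, Q$ denote the PMFs of $X$ and $Y$ and $G_Y(k) := \mathbb{P}(Y<k)$, with an analogous formula for $D_Y(M)$ obtained by swapping $(P,G_X) \leftrightarrow (Q,G_Y)$.

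For the base case $M = 0$, the formula collapses (since $G_Y(-1) = G_Y(0) = P(-1) = 0$) to $D_X(0) = \tfrac{\rho}{2(\rho+2)}P(0)Q(0) = D_Y(0)$. Combined with $p_X(-1,\rho) \ge p_Y(-1,\rho)$ (a direct consequence of $X \succeq_{\mathrm{st}} Y$, which itself follows from the binomial MLR structure), this yields $r_0 \ge r_{-1}$ immediately. For $M \ge 1$, the inductive step $r_M \ge r_{M-1}$ rearranges to the equivalent inequality $D_X(M) \cdot p_Y(M{-}1,\rho) \le D_Y(M) \cdot p_X(M{-}1,\rho)$, which I would prove by expanding both products as sums over the supports of $X$ and $Y$ and pairing cross-terms using the binomial monotone likelihood ratio inequality $P(k)Q(\ell) \ge P(\ell)Q(k)$ valid whenever $k \ge \ell$ (a consequence of $P(k)/Q(k) = \binom{n}{k}/\binom{n'}{k}\cdot(1-p)^{n-n'}$ being non-decreasing in $k$).

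The hard part will be the bookkeeping: each of the four $\rho$-weighted contributions in $D_X(M)$ interacts with all corresponding pieces of $p_Y(M{-}1,\rho)$, producing many cross-terms that must be paired carefully to permit MLR comparisons, with the corner indices $k \in \{M{-}1, M\}$ requiring special treatment; degenerate regimes (e.g.\ $M > n'+1$, where both $D_Y(M)$ and $p_Y(M{-}1,\rho)$ vanish) are handled trivially. An alternative (and perhaps cleaner) route would be to first establish the $\rho = 0$ case via a direct proof that the tilted distribution of $X$ conditional on $X$ winning the 2-way game stochastically dominates the analogous tilted distribution of $Y$, and then extend to arbitrary $\rho \ge 0$ through the representation
\[
p_X(M,\rho) = p_X(M{+}1,0) + \tfrac{1}{\rho+1}\mathbb{P}(X{=}M, Y{<}M) + \tfrac{1}{\rho+2}\mathbb{P}(X{=}Y{=}M),
\]
which exhibits $p_X(M,\rho)$ as an affine interpolation in $(1/(\rho{+}1), 1/(\rho{+}2))$ between the $\rho=0$ values at thresholds $M$ and $M{+}1$.
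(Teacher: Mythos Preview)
Your high-level strategy coincides with the paper's: both establish that $r_M := p_X(M,\rho)/p_Y(M,\rho)$ is non-decreasing in $M$ and deduce the lemma from $M=-1$. Your formula for $D_X(M)$ and the base case $M=0$ are correct. The differences lie in how the one-step inequality is executed.

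The paper makes two simplifications you do not. First, it reduces at the outset to $n'=n-1$ via the trivial induction $\tfrac{p_X(M,\rho)}{p_X(-1,\rho)}\ge \tfrac{p_Y(M,\rho)}{p_Y(-1,\rho)}$, so that the likelihood ratio $P(k)/Q(k)$ has the explicit form $\tfrac{n}{n-k}(1-p)$; this is what later permits clean coefficient identities. Second, instead of comparing $D_X(M)$ against the full tail $p_X(M{-}1,\rho)$ as you propose, the paper proves the \emph{second-order} statement that $D_X(N)/D_Y(N)$ is itself non-decreasing in $N$ (its inequality~(3.3)), and then telescopes. This is a genuine gain: each side of the target inequality now involves only two consecutive PMF values rather than a whole tail sum, and after clearing denominators the problem becomes showing that certain explicit polynomials in $p/(1-p)$ have nonnegative coefficients. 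The paper then splits this into five sub-inequalities (labelled \textbf{I1}--\textbf{I5}) and verifies each by direct coefficient comparison, using symmetrization over index pairs $(i,j)$ with $i+j$ fixed.

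Your proposed route---expand $D_X(M)\,p_Y(M{-}1,\rho)$ and $D_Y(M)\,p_X(M{-}1,\rho)$ and pair terms via the MLR inequality $P(k)Q(\ell)\ge P(\ell)Q(k)$---is the right intuition (the paper's coefficient identities are, at bottom, MLR in disguise), but without the two reductions above the number of cross-terms grows and the pairing is not obviously tractable; the paper's five-inequality decomposition suggests the bookkeeping is substantial even after simplification. Your alternative $\rho=0$-first route is appealing, and the interpolation identity you wrote is correct, but note that $p_X(M,\rho)$ is \emph{not} a convex combination of $p_X(M,0)$ and $p_X(M{+}1,0)$ (the weights on the two ``corner'' pieces differ), so the reduction to $\rho=0$ still requires an argument beyond monotonicity at $\rho=0$.
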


Roughly speaking, in the following sections, we apply Lemma~\ref{lem:bis} with $X \in \mathrm{Bin}(n_1,p)$ (where $n_1$ is the size of the largest basin after one round, say $B_1(1)$), $Y \in \mathrm{Bin}(n_2,p)$ (where $n_2 < n_1$ is the size of some of the remaining basins, say $B_1(2)$), $M$ being the maximum number of edges between $u$ and $B_1(i)$ over $i \in [\ell] \setminus \{1,2\}$, and $\rho$ equal to the number of times this maximum is attained. Then, the ratio of the probability that~$u$ obtains label $1$ and the probability that $u$ obtains label $2$ is minimal when $\ell=2$, or equivalently when $M=0$.

The proof of Lemma~\ref{lem:bis} requires a significant technical effort and it will given in Section~\ref{sec:suppl}.

\section{\texorpdfstring{Proof of Theorem~\ref{thm 1}}{}}

In this section, we fix $\eps\in (0, 1/24)$ and assume that $n^{5/8+\eps}\le np\ll n$. To start with, recall from the proof outline that we partition the vertex set~$V$ into \emph{levels}. In more detail, Level 1 consists of the vertices in the set $\vone = \{v_1, \ldots, v_{2k}\}$, where
\begin{equation}\label{eq:defk}
k = k(n) = \left\lceil 15 p^{-2}(n^{-1} \log n)^{1/2} \right\rceil,
\end{equation}
Level 2 consists of their neighbors (that is, $\vtwo = N(\vone) \setminus \vone$), and Level 3 consists of all remaining vertices (that is, $\vthree = V \setminus (\vone \cup \vtwo)$). 

Now, we adopt an important notational convention.
Whenever considering a set of vertices $S\subseteq V$, the subset of vertices of $S$ that have label $\ell$ after round $t$ will be denoted
by $S_{t}(\ell)$. Also, the sizes of the sets $\vone$, $\vtwo$, $\vthree$ and $V$ will be denoted $\sone$, $\stwo$, $\sthree$ and $\mathfrak{V}$, respectively. Furthermore, the number of elements in $\vone_t(\ell)$, $\vtwo_t(\ell)$ and $\vthree_t(\ell)$ will be denoted 
$\sone_t(\ell)$, $\stwo_t(\ell)$ and $\sthree_t(\ell)$, respectively.
For a set of labels $W\subseteq [n]$ and a subset of vertices of $S\subseteq V$, we let $S_t(W)$ be the subset of vertices of $S$ that after round $t$ have a label that belongs to $W$, that is, $S_t(W)=\bigcup_{\ell\in W} S_t(\ell)$. In particular, for a set of labels $W\subseteq [n]$, we will use $V(W)=V_0(W)$ for the set of vertices that initially have a label from $W$. We adopt the same aforementioned convention when referring to sizes of such sets; for instance, $\stwo_{1}([k])$ equals the number of vertices in Level 2 that after round $1$ have a label that belongs to set $[k]$.  

\subsection{First Two Rounds}\label{sec first two rounds}

In this section, we study what happens during the first two rounds of the label propagation algorithm. 
Our goal is to show that after round two, a.a.s.\ almost all vertices carry a label in $[k]$.

To begin with, note that, after the first round, every vertex keeps its own label or switches to a smaller label. 
So, in particular, all vertices in Level 1 get a label from $[2k]$ after the first round. 
More importantly, after the first round, every vertex in Level 2 also gets a label from $[2k]$: indeed, while initially every vertex $v\in \vtwo$ is assigned a label in $[2k+1,n]$, $v$ has a neighbor in Level 1 with label in $[2k]$.  
Recall that the set of vertices in Level~2 that get label $\ell\in [2k]$ after the first round (that is, $\vtwo_1(\ell)$) is referred to as the \emph{basin} of vertex $v_\ell$. 
Observe that a vertex $v\in\vtwo$ belongs to the basin of $v_\ell$ if and only if $v$ is a neighbor of $v_\ell$
and is not a neighbor of vertices $v_1,\ldots,v_{\ell-1}$.
Thus,
\[
\vtwo_1(\ell) = N(v_\ell)\setminus \big(\vone\cup \bigcup_{i=1}^{\ell-1} N(v_i)\big).
\]
We now formally state the main result of this section.

\begin{lemma}\label{lemma 2}
Fix $\eps > 0$ and suppose that $np\ge n^{1/2+\eps}$. Fix $K = K(n) = \lceil 2(\log n)/p\rceil$.
Then, a.a.s., after the second round, each vertex $v_j$ with $j\in [n]\setminus [k+1,K-1]$ carries a label in~$[k]$. Moreover, a.a.s.\ every label $j$ between $k+1$ and $K$ may be carried only by the vertex~$v_j$ after the second round. 
\end{lemma}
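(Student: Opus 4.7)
My plan is to reduce the lemma to two claims. Claim~(a): a.a.s.\ no vertex has round-$1$ label $\geq K$; since any round-$2$ label must appear on some vertex in the relevant neighborhood at round $1$, this propagates to round $2$. Claim~(b): a.a.s., for every $\ell \in [k+1, K-1]$ and every $v \in V \setminus \{v_\ell\}$, the count of label-$1$ vertices in $N[v]$ strictly exceeds the count of label-$\ell$ vertices, forcing $v$'s round-$2$ label to differ from $\ell$. Combining (a) and (b): labels $\geq K$ are absent at round $2$, and labels in $[k+1, K-1]$ can be carried only at the corresponding $v_\ell$; hence $v_j$ with $j \in [n] \setminus [k+1, K-1]$ has round-$2$ label in $[K-1] \setminus [k+1, K-1] = [k]$.

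For claim (a), a vertex $u$ has round-$1$ label $\ell$ iff $\ell = \min\{i : v_i \in N[u]\}$, so $\Prob(u \in V_1(\ell)) = q^{\ell-1}$ if $u = v_\ell$ and $pq^{\ell-1}$ if $u$ has index $>\ell$. Summing,
\begin{equation*}
\E\,\Big|\bigcup_{\ell \geq K} V_1(\ell)\Big|
\;\leq\; \sum_{\ell \geq K} \bigl(1 + (n-\ell)p\bigr)\,q^{\ell-1}
\;\leq\; 2n\,q^{K-1}
\;\leq\; 2n\,e^{-(K-1)p} \;=\; o(1),
\end{equation*}
using $Kp \geq 2\log n$, and Markov's inequality finishes claim~(a).

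For claim~(b), fix $\ell \in [k+1,K-1]$ and $v \neq v_\ell$, and set $X_1 = |N[v] \cap V_1(1)|$, $X_\ell = |N[v] \cap V_1(\ell)|$. The key structural observation is that membership in $V_1(\ell)$ depends only on the edges from the vertex to $\{v_1, \ldots, v_\ell\}$. Thus, conditioning on the $\sigma$-algebra $\mathcal F$ generated by the edges incident to $\{v_1, \ldots, v_\ell\}$ determines both $V_1(1)$ and $V_1(\ell)$. Assuming first $v \notin \{v_1, \ldots, v_\ell\}$, the edges $\{vu\}_{u \notin \{v_1, \ldots, v_\ell, v\}}$ are i.i.d.\ $\mathrm{Ber}(p)$ and independent of $\mathcal F$. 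By Lemma~\ref{lem chernoff}, a.a.s.\ and uniformly in $\ell$, the disjoint sets $W_1 := V_1(1) \setminus \{v_1, \ldots, v_\ell, v\}$ and $W_\ell := V_1(\ell) \setminus \{v_1, \ldots, v_\ell, v\}$ satisfy $|W_1| - |W_\ell| \geq \tfrac{1}{3}\,nkp^2$ (using $|V_1(1)| = (1+o(1))np$, $|V_1(\ell)| \leq (1+o(1))npq^{\ell-1} + O(\log n)$, and $1-q^k \geq \tfrac{1}{2}kp$). Decomposing $X_1 - X_\ell = (X_1^F - X_\ell^F) + (Y_1 - Y_\ell)$, where $X_1^F - X_\ell^F$ is the $\mathcal F$-measurable boundary contribution (bounded by $O(\log n)$ a.a.s.\ via Chernoff on $\mathrm{Bin}(\ell, p)$) and $Y_1 \sim \mathrm{Bin}(|W_1|, p)$, $Y_\ell \sim \mathrm{Bin}(|W_\ell|, p)$ are conditionally independent (thanks to the disjointness of $W_1$ and $W_\ell$), applying Lemma~\ref{lem chernoff} to $Y_1 - Y_\ell$ (mean $\geq \tfrac{1}{3}nkp^3 = \Omega(p\sqrt{n\log n}) \gg \log n$, variance $\leq 2np^2$) yields
\begin{equation*}
\Prob(X_1 \leq X_\ell \mid \mathcal F)
\;\leq\; \exp\bigl(-\Omega(nk^2 p^4)\bigr) \;=\; n^{-\Omega(1)},
\end{equation*}
using $nk^2 p^4 = 225 \log n$ from the definition of $k$. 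A union bound over the $\leq nK$ pairs $(v, \ell)$ closes this case.

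The main technical obstacle is handling the at most $K$ remaining vertices $v \in \{v_1, \ldots, v_{\ell-1}\} \setminus \{v_\ell\}$, for which $v$'s edges lie in $\mathcal F$ and the conditional argument as stated leaves no randomness. For $v = v_j$ with $j \leq 2k$, the basin argument applies directly: all $|B_1(j)| \approx npq^{j-1}$ vertices in $v_j$'s basin carry label $j$ at round $1$ and lie in $N(v_j)$, whereas any other label $\ell'$ contributes only $O(np^2)$ vertices to $N[v_j]$, so $v_j$'s round-$2$ label is $j$ and in particular not $\ell$. For $v = v_j$ with $j \in [2k+1, \ell-1]$, conditioning instead on the edges incident to $\{v_1, \ldots, v_\ell\} \setminus \{v\}$ restores the randomness of $v$'s edges to $V \setminus \{v_1, \ldots, v_\ell\}$, and the same Chernoff argument applies after minor adjustments to the boundary estimates.
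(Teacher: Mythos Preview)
Your overall strategy is sound and differs from the paper's in its technical framing. The paper fixes $u$ and $j>k$ and directly compares two (unconditional) binomials: $|N(u)\cap N(v_1)|\sim\mathrm{Bin}(n-2,p^2)$ versus an upper bound $\mathrm{Bin}(n-2,q^kp^2)+2$ for $|N[u]\cap V_1(j)|$, applying Chernoff to each tail separately; it then treats $j\ge K$ by a second, analogous comparison. You instead first kill all labels $\ge K$ at round~$1$ by a first-moment argument (Claim~(a), which is a clean alternative to the paper's separate Chernoff step for $j\ge K$), and for Claim~(b) you condition on the edges incident to $\{v_1,\dots,v_\ell\}$ so that $V_1(1)$ and $V_1(\ell)$ become deterministic before exposing the fresh edges from $v$. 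Both routes land on the same exponent $\Theta(nk^2p^4)=\Theta(\log n)$, which with the constant $225$ in the definition of $k$ comfortably beats the $nK=O(n^{3/2-\eps}\log n)$ union bound.

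There is, however, a genuine gap in your handling of $v=v_j$ with $j\in[2k+1,\ell-1]$. Conditioning on the edges incident to $\{v_1,\dots,v_\ell\}\setminus\{v_j\}$ does \emph{not} determine $V_1(\ell)$: for any $u$ of index $>\ell$, membership in $V_1(\ell)$ requires $u\nsim v_j$, and that edge is precisely one you have left unconditioned. So the ``same Chernoff argument after minor adjustments'' does not go through as written. Fortunately the case is in fact trivial: if $j<\ell$, then every $u\in V_1(\ell)$ (including $v_\ell$ itself, when $v_\ell\in V_1(\ell)$) must satisfy $u\nsim v_j$, hence $N[v_j]\cap V_1(\ell)=\emptyset$ deterministically and $X_\ell=0$. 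It then suffices that $X_1\ge 1$, which holds with probability $1-o(n^{-2})$ since $|N(v_j)\cap N(v_1)|\sim\mathrm{Bin}(n-2,p^2)$ and $np^2\ge n^{2\eps}\to\infty$. This same observation would also dispose of the sub-case $j\le 2k$ without the basin argument, though your basin argument there is correct as stated.
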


Before we move to the proof of the above lemma, note that, on the one hand, $k=1$ if~$p \ge \sqrt{15} (\log n / n)^{1/4}$. Also, observe that, when $p = o(1)$, one may easily show that a.a.s.\ the vertex $v_2$ does not change its label during the first two rounds (indeed, in this case,~$v_2$ a.a.s.\ is not adjacent to $v_1$, and the total number of common neighbors of $v_1$ and~$v_2$ is a.a.s.\ of smaller order than the total number of neighbors of~$v_2$). We will then use Lemma~\ref{lemma 2} to correct the main result from~\cite{KPS13}. On the other hand, for sparser graphs, $k=k(n)$ tends to infinity as $n \to \infty$, and in this case we obtain improved results at the price of several further technical steps.

\begin{proof}[Proof of Lemma~\ref{lemma 2}]
Note that all vertices in $V([k]) \subseteq A$, as well as their neighbors, get a label from $[k]$ after the first round. So, even if some vertices in $V([k])$ switch their labels during the second round, these labels remain in $[k]$. Hence, the desired property holds for all vertices in $V([k])$. It is important to notice that we do not need to expose \emph{any} edges of $\mathcal G(n,p)$ to conclude this.

Next, we show that for every two vertices $u, v_j \in V \setminus V([k])$ (so, in particular,~$j > k$), with probability $1 - o(n^{-2})$, the number of common neighbors of $v_1$ and $u$ is larger than the number of vertices in $N[u]\cap N[v_j]\setminus \basin([k])$. 
This implies that $u$ obtains label~$j$ after the second round with probability $o(n^{-2})$. On the one hand, the number of common neighbors of~$u$ and $v_1$  is a random variable $Y_1$ with distribution $\mathrm{Bin}(n-2, p^2)$. On the other hand, the number of 
vertices in $N[u]\cap N[v_j]\setminus \basin([k])$ is dominated by~$Y_j+2$ where $Y_j$ is a binomial random variable with distribution $\mathrm{Bin}(n-2, \onemp^kp^2)$:
indeed, except for vertices~$u$ and $v_j$ that could carry label $j$ after the first round, the remaining relevant vertices must be neighbors of $u$ (to influence its label), neighbors of $v_j$ (to obtain label $j$ after the first round) but not adjacent to any of $v_1,\ldots,v_k$, which happens with probability~$q^kp^2$.

Denote $r = \mathbb E[Y_1-Y_j] = p^2 (n-2) (1-\onemp^k) = (1-o(1))np^3k$, where for the last equality we used that 
$1-\onemp^k = 1-(1-p)^k = (1-o(1))pk$.
Clearly, 
\begin{equation}\label{eq:ineq Y}
\Prob(Y_1\le Y_j+2)
\le \;
\Prob(Y_1-\mathbb E Y_1\le -r/2+1) + \Prob(Y_j-\mathbb E Y_j\ge r/2-1).    
\end{equation}
Thus, by Chernoff's bound, the probabilities on the right-hand side above are bounded from above by
$\exp\left(-\frac{(r-2)^2/4}{2(\mathbb E Y_1+(r-2)/6)}\right)$ and 
$\exp\left(-\frac{(r-2)^2/4}{2(\mathbb EY_j+(r-2)/6)}\right)$, respectively. 
As a result,
\[
\Prob(Y_1\le Y_j+2)\le
2 \exp\Big(-\frac{(r-2)^2/4}{2(\mathbb E Y_1+(r-2)/6)}\Big)
=
2\exp\Big(-(1+o(1))\frac{n^2p^6k^2}{8np^2}\Big). 
\]
By using that $np^4k^2\ge 225\log n$, we conclude by a union bound that, for all pairs $(u, v_j)$ with $j > k$ and $u\neq v_j$, $u$ does not get label $j$ after round 2.

Finally, fix $j \ge K$. It follows that $|N(v_1)\cap N(v_j)|$ dominates a random variable $Z_j$ with distribution $\mathrm{Bin}(n-2, p^2)$, while $|N[v_j]\setminus N[V([j-1])]|$ is dominated by a random variable $W_j+1$ where $W_j$ is distributed according to $\mathrm{Bin}(n-j, pq^{j-1})$. Hence, using that for $j>K$,
\[\mathbb E Z_j = (n-2)p^2\gg npq^{j-1}+1 \geq \mathbb E[W_j + 1],\]
an argument similar to~\eqref{eq:ineq Y} with $\tfrac{1}{3} \mathbb E Z_j$ instead of $r$ shows that
\[\Prob(Z_j\le W_j+1)\le 2 \exp\left(-\frac{(\tfrac{1}{3} \mathbb E Z_j)^2}{2(\mathbb E Z_j+\tfrac{1}{9}\mathbb E Z_j)}\right) = \exp(-\Omega(np^2)).\]
Thus, a union bound implies that, for every $j\in [K, n]$, $v_j$ does not carry label $j$ after the second round, as desired.
\end{proof}

At this point, we are able to recover fast convergence of the label propagation algorithm when $k=1$. We need the following more general lemma.

\begin{lemma}\label{claim 5}
Fix $\eps > 0$ and suppose that $np\ge n^{1/2+\eps}$. Also, suppose that at some round $R$ of LPA, at least $0.9 n$ of the vertices in $\cG(n,p)$ have the same label $\ell$. Then, after two more rounds, a.a.s.\ every vertex carries label $\ell$.
\end{lemma}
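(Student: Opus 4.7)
The plan is to show the slightly stronger statement that one extra round (namely round $R+1$) is already sufficient, and that round $R+2$ then merely preserves the all-$\ell$ configuration. To this end, denote by $W := V_R(\ell)$ the set of vertices carrying label $\ell$ at round $R$, so that by hypothesis $|W| \ge 0.9 n$ and $|V \setminus W| \le 0.1n$. Since every vertex with a label different from $\ell$ lies in $V \setminus W$, the multiplicity in $N[v]$ of any label $\ell' \ne \ell$ is bounded above by $|N[v] \cap (V \setminus W)|$. It will therefore suffice to establish that a.a.s.
\[|N(v) \cap W| \;>\; |N[v] \cap (V \setminus W)| \qquad \text{for every } v \in V,\]
which would force $\ell$ to be the unique strict majority in $N[v]$, so $v$ adopts label $\ell$ at round $R+1$ with no tie-breaking required.

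The core estimate will be Chernoff's inequality (Lemma~\ref{lem chernoff}): for any fixed vertex $v$ and any fixed $W^* \subseteq V \setminus \{v\}$ with $|W^*| \ge 0.9n$, one has $|N(v) \cap W^*| \sim \mathrm{Bin}(|W^*|, p)$ with mean at least $0.9\,np$, so
\[\mathbb{P}\bigl(|N(v) \cap W^*| < 0.89\,np\bigr) \le \exp\bigl(-\Omega(np)\bigr) = \exp\bigl(-\Omega(n^{1/2+\eps})\bigr),\]
and symmetrically $\mathbb P(|N(v) \cap T^*| > 0.11\,np) \le \exp(-\Omega(np))$ for any fixed $T^* \subseteq V$ with $|T^*| \le 0.1n$. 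A union bound over the $n$ choices of $v$ would keep the failure probability at $o(1)$. Combined, these estimates give $|N(v) \cap W| \ge 0.89\,np > 0.11\,np + 1 \ge |N[v] \cap (V \setminus W)|$ for every $v$ once $n$ is large enough, closing the argument.

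The subtle point, which is the main technical obstacle, is that the sets $W$ and $V \setminus W$ are not pre-specified but rather produced by the LPA dynamics, and so depend on the random graph itself. A brute-force union bound over all $2^{\Omega(n)}$ possible subsets of size at least $0.9n$ would overwhelm the $\exp(-\Omega(n^{1/2+\eps}))$ Chernoff tail. I would circumvent this by appealing to the staged-exposure philosophy used throughout the paper: reveal first the edges of $\mathcal{G}(n,p)$ that determine the round-$R$ labeling (thereby pinning down the set $W$), and only then use the remaining unexposed edges incident to each $v$ as fresh Bernoulli$(p)$ randomness against which to apply the Chernoff concentration above. This reduces the analysis to the fixed-set Chernoff bounds of the previous paragraph and finishes the proof, with round $R+2$ then preserving the all-$\ell$ configuration trivially.
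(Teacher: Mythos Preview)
Your proposal has a genuine gap in the staged-exposure step. You write that one can ``reveal first the edges of $\mathcal{G}(n,p)$ that determine the round-$R$ labeling'' and then use ``the remaining unexposed edges incident to each $v$ as fresh Bernoulli$(p)$ randomness''. But the round-$R$ labeling is, in general, a function of the \emph{entire} graph: the label of $v$ at round $R$ depends on the labels of its neighbors at round $R-1$, which in turn depend on their neighbors' labels at round $R-2$, and so on. Already after two rounds every edge of $G$ has been touched, and the lemma is invoked in the paper with $R=3$. There is therefore no reservoir of unexposed edges to appeal to, and conditioning on $W$ destroys the independence needed for your Chernoff bounds. This is exactly the obstacle you correctly flagged a paragraph earlier, and it is not resolved by the sentence that follows.

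The paper's proof handles the dependency differently. It does take a union bound, but not over all $2^{\Omega(n)}$ candidate majority sets alone; rather, it fixes a pair $(S,T)$ with $|T|=0.9n$ and $|S|=300p^{-1}$, and shows that with probability $1-o\bigl(\tbinom{n}{s}^{-1}\tbinom{n}{t}^{-1}\bigr)$ at least one vertex of $S$ has more neighbors in $T$ than outside. The point is that the relevant Chernoff tail is $\exp(-\Theta(snp))=\exp(-\Theta(n))$, which beats the $\le 2^{2n}$ pairs $(S,T)$. After the union bound, no set of $300p^{-1}$ vertices can simultaneously avoid label $\ell$, so at most $300p^{-1}$ vertices survive round $R+1$ with a different label; since a.a.s.\ every vertex has degree $\Omega(np)\gg p^{-1}$, one further round forces unanimity. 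Thus the paper genuinely needs both rounds, and the union bound is engineered to succeed by making the ``survivor'' set $S$ small rather than by staging edges.
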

\begin{proof}
Let $S, T\subseteq V$ be not necessarily disjoint sets such that $s=|S| = 300p^{-1}$ and $t=|T| = 0.9n$ (one can think of $T$ as vertices having label $\ell$ at round $R$, and of $S$ as vertices ``trying to avoid'' label $\ell$ at round $R+1$). 
We will show that, with probability $1-o(\tbinom{n}{s}^{-1}\tbinom{n}{t}^{-1})$, there is at least one vertex in $S$ which has strictly more neighbors in $T$ than in $V\setminus T$.

On the one hand, observe that the number of potential edges between $S$ and $V\setminus T$ is
at most $|S|\cdot|V\setminus T|\leq 0.1sn$, and that the number of potential edges between $S$ and $T$ is at least $s(t-s)\geq 0.8sn$. Thus, the number of edges in $\mathcal G(n,p)$ between $S$ and $V\setminus T$ is dominated by $X\in\mathrm{Bin}(0.1sn,p)$ and the number of edges between $S$ and $T$ dominates $Y\in\mathrm{Bin}(0.8sn,p)$.
Hence, the probability that, for every vertex $u\in S$, $|N[u]\cap S|\ge |N(u)\cap T|$ (which is necessary for the vertex $u$ to obtain label different from $\ell$ at round $R+1$), is at most 
\begin{align*}
\mathbb{P}(2X+s\geq Y)
&\leq \big(\mathbb{P}(X\geq 0.2snp)+\mathbb{P}(Y\leq 0.5snp)\big)\\
&\leq 2\exp\Big(-\frac{snp}{100}\Big) = o\bigg(\binom{n}{s}^{-1}\binom{n}{t}^{-1}\bigg).
\end{align*}
Hence, a union bound over the (at most) $\binom{n}{s}\binom{n}{t}$ pairs of sets $S,T$ with $|S| = s$ and $|T| = t$, allows us to deduce that, for every set $S$ as above, at least one vertex in $S$ obtains label $\ell$ at the next round.
In particular, a.a.s.\ all but at most $300p^{-1}$ vertices carry label $\ell$ at the next round.
Finally, since a.a.s.~every vertex has degree $\Omega(np)\gg p^{-1}$, most of the neighbors of every vertex carry label $\ell$ after round $R+1$, which implies that a.a.s.\ all vertices get the same label at round $R+2$, as desired.
\end{proof}


The next corollary is a direct consequence of the fact that $k=1$ in the range $np\ge \sqrt{15}(\log n)^{1/4}n^{3/4}$, Lemmas~\ref{lemma 2} 
and~\ref{claim 5}, and the fact that $n - \tfrac{2}{p} \log n \ge 0.9 n$ for all sufficiently large $n$.

\begin{corollary}\label{cor:recover}
Suppose that $np\ge \sqrt{15}(\log n)^{1/4}n^{3/4}$. Then, a.a.s.\ the label propagation algorithm attributes label $1$ to all vertices after three rounds.
\end{corollary}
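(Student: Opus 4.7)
The plan is to apply Lemma~\ref{lemma 2} to reduce, after round~$2$, to a configuration in which label~$1$ dominates so overwhelmingly that one additional round suffices for unanimity. The hypothesis $np \ge \sqrt{15}(\log n)^{1/4}n^{3/4}$ rewrites as $p^2 \ge 15\sqrt{\log n / n}$, and plugging this into the definition of $k$ in~\eqref{eq:defk} immediately gives $k=1$, so $[k]=\{1\}$.

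With $k=1$ and $K=\lceil 2\log n/p\rceil$, Lemma~\ref{lemma 2} implies that a.a.s.\ after round~$2$ every vertex $v_j$ with $j\in\{1\}\cup[K,n]$ carries label~$1$, and for each $j\in[2,K]$, label~$j$ is carried only by $v_j$ (if at all). Consequently, at most $K-2=O(\log n/p)$ ``abnormal'' vertices carry a label different from~$1$ after round~$2$, and, crucially, each such vertex carries a label unique to itself.

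For round~$3$ I would argue by a direct majority count. A Chernoff bound combined with a union bound shows that a.a.s.\ every vertex~$v$ has degree $d(v)=(1+o(1))np$, and the hypothesis gives $np^2 \ge 15\sqrt{n\log n} \gg \log n$, hence $d(v)\gg K$. For any such~$v$, the closed neighborhood $N[v]$ contains at most $K-2$ abnormal vertices, so label~$1$ appears in $N[v]$ at least $d(v)+1-(K-2)$ times, while every other label appears at most once (by the distinctness of the abnormal labels). Label~$1$ is therefore the strict majority in $N[v]$, tie-breaking plays no role, and $v$ adopts label~$1$ at round~$3$.

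I do not expect serious obstacles beyond the work already done in Lemma~\ref{lemma 2}. The main subtlety is that naively combining Lemmas~\ref{lemma 2} and~\ref{claim 5} would yield convergence only at round~$4$, since Lemma~\ref{claim 5} guarantees the collapse in two rounds starting from a $0.9n$ majority. The improvement to three rounds hinges on the stronger conclusion of Lemma~\ref{lemma 2}: after round~$2$ the non-label-$1$ vertices are not merely few but also carry pairwise distinct labels, which lets us bypass the ``intermediate cleanup'' round in the proof of Lemma~\ref{claim 5} and conclude unanimity in a single additional round.
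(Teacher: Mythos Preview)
Your proof is correct and follows the same high-level route as the paper: observe $k=1$, apply Lemma~\ref{lemma 2} to control the configuration after round~$2$, and then finish in one more round. The paper's one-line proof simply cites Lemmas~\ref{lemma 2} and~\ref{claim 5} together with $n-\tfrac{2}{p}\log n\ge 0.9n$; you are right that a black-box application of Lemma~\ref{claim 5} would only yield round~$4$, so what the paper really uses is the second half of that lemma's proof (once fewer than $O(p^{-1}\log n)\ll np$ vertices are abnormal, one round suffices). Your argument makes this step explicit and in fact cleaner, by exploiting the ``moreover'' clause of Lemma~\ref{lemma 2}: since the non-$1$ labels after round~$2$ are pairwise distinct, each competes with label~$1$ only with multiplicity one in any closed neighbourhood, so strict majority for label~$1$ is immediate without even needing the full strength of $np\gg K$.
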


Thus, in the sequel, we assume that $np\le \sqrt{15}(\log n)^{1/4}n^{3/4}$.


\subsection{\texorpdfstring{The regime $\Omega(n^{2/3}) = np\le \sqrt{15}(\log n)^{1/4}n^{3/4}$}{}}

In this section, we specify a suitable range of $p$ in the statement of each result since these ranges may sometimes differ. Note that while some results may be shown in more generality, the range is often restricted to the one we need in the sequel.

From Section~\ref{sec first two rounds} we know that after round~2, a.a.s.\ all but $O((\log n)/p)$ vertices receive a label in $[k]$.
In this section, we establish that after round~5, a.a.s.\ every vertex has label~$1$. Most of our effort concentrates on showing that after round 3, a.a.s.\ every vertex in Level~3 has label~$1$. This and the observation that a.a.s.\ the number of vertices in Levels 1 and 2 is $o(n)$ suggests that soon after round 3, all vertices should get label $1$. 

\subsubsection{Properties of the basins}\label{sec:3.2.1}
To begin with, we establish Lemma~\ref{claim E} (showing that for suitably chosen $p$, the first basin is close to its mean and the gap between the first and the other basins is sufficiently big), and the stronger Lemma~\ref{claim E'} (showing that for slightly larger values of $p$, the sizes of all basins are close to their means). For this, recall that $\szBsn{\ell}$ denotes the size of the basin of $v_\ell$, that is, $\szBsn{\ell}=|\basin(\ell)|$. Observe that $\szBsn{\ell}\in \mathrm{Bin}(n-2k, \onemp^{\ell-1}p)$, so for $\ell\in [2k]$, since $\ell p \le kp = o(1)$ and $k = o(n)$, we have 
\[
\mathbb E \szBsn{\ell}=(n-2k)\onemp^{\ell-1}p \sim np. 
\]
Now, fix
\[
\omega=\omega(n)=((np)^{1/2}\cdot np^2)^{1/2}=n^{3/4}p^{5/4}.
\]
In particular, this choice guarantees that $(np)^{1/2} \ll \omega \ll np^2$ as long as $np\gg n^{2/3}$.

\begin{lemma}\label{claim E}
Suppose that $n^{2/3} \ll np \le \sqrt{15} n^{3/4} (\log n)^{1/4}$. Then, the event
\[\cE = \{\text{for every $\ell\in [2,2k]$, $\szBsn{1} - \szBsn{\ell}\ge \tfrac{1}{1.4} (\ell-1) n p^2$}\}\cap \{|\szBsn{1} - \mathbb E\szBsn{1}|\le \omega\}\] holds a.a.s.
\end{lemma}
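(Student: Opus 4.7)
The plan is to prove the two conjuncts of $\cE$ separately, applying Lemma~\ref{lem chernoff} to $\szBsn{1}$ for the concentration statement and Chebyshev's inequality to each of the $O(k)$ differences $\szBsn{1}-\szBsn{\ell}$ for the gap statement, then closing with a union bound over $\ell$. The starting observation is that $\szBsn{1}\in\mathrm{Bin}(n-2k,p)$ has mean $(1+o(1))np$ and that $\omega=n^{3/4}p^{5/4}\ll np$, so
\[
\Prob(|\szBsn{1}-\E\szBsn{1}|>\omega) \le 2\exp\!\Bigl(-\tfrac{\omega^2}{3np}\Bigr) = 2\exp\!\Bigl(-\tfrac{1}{3}(np^3)^{1/2}\Bigr) = o(1),
\]
using that the hypothesis $np\gg n^{2/3}$ is equivalent to $np^3=(np)^3/n^2\to\infty$.

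For the gap statement, I would fix $\ell\in[2,2k]$ and write
\[
\szBsn{1}-\szBsn{\ell} = \sum_{v\in V\setminus\vone}\bigl(\indic{\basin(1)}{v}-\indic{\basin(\ell)}{v}\bigr),
\]
which is a sum of $n-2k$ iid random variables supported in $\{-1,0,1\}$, because the summand for $v$ depends only on the edges between $v$ and $\vone$, and these edge sets are disjoint across distinct vertices $v$. Since $\basin(1)\cap\basin(\ell)=\emptyset$, each summand has second moment $p(1+q^{\ell-1})\le 2p$, so $\mathrm{Var}(\szBsn{1}-\szBsn{\ell})\le 2np$. Using $(\ell-1)p\le 2kp=o(1)$ to expand $1-q^{\ell-1}=(1+o(1))(\ell-1)p$, the mean is $\E[\szBsn{1}-\szBsn{\ell}]=(1+o(1))(\ell-1)np^2$. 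Therefore the bad event $\{\szBsn{1}-\szBsn{\ell}<\tfrac{1}{1.4}(\ell-1)np^2\}$ requires a downward deviation of magnitude at least $(1-\tfrac{1}{1.4}-o(1))(\ell-1)np^2\ge 0.28(\ell-1)np^2$, which by Chebyshev's inequality has probability $O\bigl(1/((\ell-1)^2np^3)\bigr)$. Summing over $\ell\in[2,2k]$ and invoking $\sum_{j\ge 1}j^{-2}<\pi^2/6$, the total failure probability for the gap statement is $O(1/np^3)=o(1)$.

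The delicate point is that for small $\ell$ — most critically $\ell=2$ — the expected gap $np^2$ exceeds the standard deviation $\sqrt{2np}$ of $\szBsn{1}-\szBsn{\ell}$ only by the factor $(np^3/2)^{1/2}$, which can be as small as $\omega(1)$ under the mere hypothesis $np\gg n^{2/3}$ (e.g.\ when $np^3=O(\log n)$). Any Chernoff-style per-$\ell$ tail would then yield failure probability roughly $\exp(-\Omega((\ell-1)^2np^3))$, not uniformly $o(1/k)$ across $\ell\in[2,2k]$, so a naive union bound of exponential estimates would fail. The saving feature is the summability of $\sum(\ell-1)^{-2}$, which lets the weaker quadratic Chebyshev tail assemble into an $o(1)$ union bound without requiring per-$\ell$ exponential decay.
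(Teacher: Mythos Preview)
Your proof is correct. The concentration of $\szBsn{1}$ around its mean is handled exactly as in the paper. For the gap statement you take a genuinely different route: you express $\szBsn{1}-\szBsn{\ell}$ as a sum of $n-2k$ i.i.d.\ $\{-1,0,1\}$-valued variables and apply Chebyshev's inequality, obtaining a per-$\ell$ failure probability of $O\bigl(1/((\ell-1)^2np^3)\bigr)$, which is summable over $\ell$ thanks to $\sum_{j\ge 1}j^{-2}<\infty$. The paper instead applies Chernoff separately to $\szBsn{1}$ and $\szBsn{\ell}$ about their respective means, bounding each bad event by $2\exp\bigl(-(\ell-1)^2np^3/300\bigr)$ and summing. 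Your treatment of the difference as a single random variable is arguably cleaner; the paper's route gives sharper per-$\ell$ tails at no extra cost.

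Your final paragraph, however, rests on a misconception. You assert that the Chernoff-type bound $\exp(-\Omega((\ell-1)^2np^3))$ would fail a union bound because it is ``not uniformly $o(1/k)$ across $\ell\in[2,2k]$''. That criterion is sufficient but not necessary: a union bound needs only the \emph{sum} to be $o(1)$. Since $np^3\to\infty$, one has
\[
\sum_{j\ge 1}\exp\bigl(-cj^2np^3\bigr)=O\bigl(\exp(-c\,np^3)\bigr)=o(1)
\]
directly (the terms decay super-geometrically in $j$), so the exponential route works without issue---and is precisely what the paper does. Your Chebyshev argument is therefore a legitimate alternative, not a required detour.
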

\begin{proof}
Recall that $\szBsn{1}$ and $\szBsn{\ell}$ are binomial random variables with means $(n-2k)p$ and $(n-2k)pq^{\ell-1}$, respectively. Hence, since $\ell p\le 2kp = o(1)$ and $k = o(n)$,
\[\mathbb E \szBsn{1} - \mathbb E \szBsn{\ell} = (n-2k)p(1-(1-p)^{\ell-1}) = (1+o(1)) (\ell-1) np^2.\]

\noindent
Moreover, using that $1 - \tfrac{1}{1.4} > \frac15$, by Chernoff's bound,
\begin{align*}
&\Prob(\szBsn{1} - \szBsn{\ell}\le \tfrac{1}{1.4} (\ell-1) n p^2)\\
\le\; 
&\Prob(\szBsn{1} - \mathbb E \szBsn{1} \le - 
\tfrac{1}{10} (\ell-1) n p^2) + \Prob(\szBsn{\ell} - \mathbb E \szBsn{\ell}\ge \tfrac{1}{10} (\ell-1) n p^2)\\
\le\; 
&\exp\left(-\frac{((\ell-1) n p^2/10)^2}{2\mathbb E \szBsn{1}}\right) + \exp\left(-\frac{((\ell-1) n p^2/10)^2}{2(\mathbb E \szBsn{\ell} + (\ell-1) n p^2/30)}\right)\\
\le\; 
&2\exp\left(-\frac{((\ell-1) n p^2)^2}{300np}\right) \\
=\; &
2\exp\left(-\frac{(\ell-1)^2}{300} np^3\right).
\end{align*}
Since $np^3\gg 1$, summing over the range $\ell\in [2,2k]$ shows that the first event in the intersection that determines $\cE$ holds a.a.s. For the second event therein, using Chernoff's bound and the fact that $\omega^2 = n^{3/2} p^{5/2}\gg \mathbb E \szBsn{1}$ shows that it also holds a.a.s.\ and finishes the proof.
\end{proof}

\begin{remark}\label{rem E:IR}
In fact, the proof of Lemma~\ref{claim E} also implies the following result. Suppose that $np = c n^{2/3}$ for some constant $c > 0$ (in particular, $np^3=c^3$ and $\omega^2=\Theta(\mathbb E \szBsn{1})$). Then, for every $\eps\in (0,1)$ and every sufficiently large positive integer $L = L(\eps, c)$, the event
\begin{align*}
\cE_L = 
&\{\text{for every $\ell\in [L+1,2k]$, $\szBsn{1} - \szBsn{\ell}\ge \tfrac{1}{1.4} (\ell-1) n p^2$}\}\\
&\cap \{|\szBsn{1} - \mathbb E\szBsn{1}|\le L n^{1/3}\}  
\end{align*}
holds with probability at least $1-\eps$.
\end{remark}

Although Remark~\ref{rem E:IR} extends Lemma~\ref{claim E} in the case $np = \Theta(n^{2/3})$, it fails to provide any information for the few largest basins. To fill this gap, Lemma~\ref{lem:separate} shows that their sizes are sufficiently far from each other with probability close to 1. Before going to the proof itself, we show a technical lemma that may itself be of independent interest.

\begin{lemma}\label{lem:bin thinning}
Fix $L\in \mathbb N$, a set of $L+1$ colors, and suppose that $\widehat p = \widehat p(n)$ is a real number satisfying $n\widehat p = cn^{2/3}$ for some constant $c \in (0,\infty)$. Color the elements in $[n]$ independently so that for every $j\in [n]$, $j$  obtains color $i\in [L]$ with probability $\widehat p$, and color $L+1$ with probability $1 - L \widehat p$. Denote by $X_i$ the number of vertices in color $i$, and set $Y_i = \tfrac{X_i - \widehat p n}{\sqrt{\widehat p (1-\widehat p) n}}$. Then,
\begin{equation*}
(Y_1,\ldots,Y_L)\xrightarrow[n\to \infty]{d}(N_1,\ldots,N_L),
\end{equation*}
where $(N_i)_{i=1}^L$ are i.i.d.\ normal random variables with mean $0$ and variance $1$.
\end{lemma}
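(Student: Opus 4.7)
The plan is to apply the Cramér--Wold device and reduce the multivariate convergence to a one-dimensional CLT for every linear combination. Fix $t=(t_1,\ldots,t_L)\in\mathbb R^L\setminus\{0\}$ and, for each $j\in[n]$, set $Z^{(j)}=\sum_{i=1}^L t_i\mathbf{1}_{\{j\text{ gets color }i\}}$. Since the $L$ color indicators are mutually exclusive at each fixed $j$, the variables $Z^{(1)},\ldots,Z^{(n)}$ are i.i.d., uniformly bounded by $\max_i|t_i|$, and a direct computation gives $\mathbb E Z^{(j)}=\widehat p\sum_i t_i$ together with $\mathrm{Var}(Z^{(j)})=\widehat p\sum_i t_i^2-\widehat p^2\bigl(\sum_i t_i\bigr)^2$. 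By construction, $\sum_{i=1}^L t_iY_i=\bigl(\sum_j Z^{(j)}-n\widehat p\sum_i t_i\bigr)/\sqrt{n\widehat p(1-\widehat p)}$, so its variance equals $\bigl(\sum_i t_i^2-\widehat p(\sum_i t_i)^2\bigr)/(1-\widehat p)$, which converges to $\sum_i t_i^2$ as $\widehat p=cn^{-1/3}\to 0$.

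Next, I will invoke the Lindeberg--Feller CLT for the triangular array $(Z^{(j)}-\mathbb E Z^{(j)})_{1\le j\le n,\,n\ge 1}$. The total variance $s_n^2=n\,\mathrm{Var}(Z^{(1)})\sim n\widehat p\sum_i t_i^2=\Theta(n^{2/3})$ tends to infinity, while the centered summands are uniformly bounded by $2\max_i|t_i|$. Hence, for every $\eps>0$ and all sufficiently large $n$, the event $\{|Z^{(j)}-\mathbb E Z^{(j)}|>\eps s_n\}$ is empty, so Lindeberg's condition holds trivially. Consequently $\sum_i t_iY_i$ converges in distribution to a centered Gaussian with variance $\sum_i t_i^2$, which is precisely the law of $\sum_i t_iN_i$. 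The Cramér--Wold device then yields the claimed joint convergence.

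The only subtlety, and what I view as the main (albeit mild) obstacle, is that the underlying multinomial distribution changes with $n$ because $\widehat p\to 0$, which prevents a direct appeal to a fixed-parameter multinomial CLT. What rescues the argument is twofold: on the one hand, the uniform boundedness of the indicator vectors makes Lindeberg's condition automatic; on the other hand, one checks that the pairwise covariance $\mathrm{Cov}(Y_i,Y_j)=-\widehat p/(1-\widehat p)\to 0$ for $i\ne j$, so the dependence induced by the multinomial constraint becomes negligible in the limit and accounts for the asymptotic independence of the limiting coordinates $N_i$.
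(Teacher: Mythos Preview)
Your proof is correct. The Cram\'er--Wold reduction and the Lindeberg--Feller CLT apply exactly as you describe: the summands $Z^{(j)}-\mathbb E Z^{(j)}$ are uniformly bounded while $s_n^2\sim n\widehat p\sum_i t_i^2\to\infty$, so the Lindeberg condition holds trivially, and the ratio $s_n^2/(n\widehat p(1-\widehat p))\to\sum_i t_i^2$ gives the right limiting variance after an application of Slutsky's theorem.

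Your route differs substantially from the paper's. Rather than invoking Cram\'er--Wold, the paper constructs an explicit coupling: it introduces $L$ \emph{independent} random subsets $\widehat{\mathcal X}_1,\ldots,\widehat{\mathcal X}_L$ of $[n]$, each containing a given element with probability $\widehat q=1-(1-L\widehat p)^{1/L}$, applies the univariate CLT to the independent binomials $|\widehat{\mathcal X}_i|$, and then recovers the multinomial counts $X_i$ by randomly assigning each element lying in several $\widehat{\mathcal X}_i$'s to one of them uniformly. The paper then checks that the resulting corrections (overlaps of size two contribute $\tfrac12(L-1)c^2 n^{1/3}+o(n^{1/3})$, overlaps of size three or more contribute $o(n^{1/3})$) are deterministic to leading order and cancel against the shift $\widehat q-\widehat p$, so $(Y_i)$ and $(\widehat Y_i)$ differ by $o(1)$ a.a.s. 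Your argument is shorter and more direct; the paper's coupling is more hands-on and yields, as a byproduct, an explicit a.a.s.\ approximation of the dependent counts by independent ones, which is in the spirit of other couplings used later in the paper (e.g.\ Lemma~\ref{lem approx}).
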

\begin{proof}
Define $(\widehat \cX_i)_{i=1}^L$ as $L$ independent subsets of $[n]$ where every number $j\in [n]$ belongs to $\widehat \cX_i$ with probability 
\[\widehat q = 1 - (1-L\widehat p)^{1/L} = \widehat p + \tfrac12(L-1) \widehat p^2 + O(\widehat p^3),\]
where we used that for every $\alpha > 0$, $(1+x)^{\alpha} = 1 + \alpha x + \tfrac12\alpha (\alpha-1) x^2 + O(x^3)$ as $x\to 0$. 
Set $\widehat X_i = |\widehat \cX_i|$ and $\widehat Y_i = \tfrac{\widehat X_i - \widehat q n}{\sqrt{\widehat q (1-\widehat q) n}}$. Then, by the central limit theorem for independent binomial random variables,
\begin{equation}\label{eq:conv Ys}
(\widehat Y_1,\ldots,\widehat Y_L)\xrightarrow[n\to \infty]{d}(\widehat N_1,\ldots,\widehat N_L),
\end{equation}
where $(\widehat N_i)_{i=1}^L$ are i.i.d.\ normal random variables with mean 0 and variance 1. 

We construct the random variables $(X_i)_{i=1}^L$ from the sets $(\widehat \cX_i)_{i=1}^L$. For every integer~$j$ belonging to at least one of the sets $(\widehat \cX_i)_{i=1}^L$, associate a random variable $U_j$ that is uniformly distributed over $\{i: \text{ $j\in \widehat \cX_i$}\}$ so that $(U_j)_{j=1}^n$ are independent.  Then, for every $i\in [L]$, denote $\cX_i=\{j\in [n] \colon \text{$U_j$ exists and $U_j=i$}\}$ and $X_i = |\cX_i|$. Note that the probability to belong to $\cX_i$ is the same for all $i\in [L]$, and it is exactly $\tfrac{1}{L}(1 - (1c\widehat q)^L) = \widehat p$, so $(X_i)_{i=1}^L$ have the desired distribution.

Now, on the one hand, an element $j\in [n]$ belongs to at least three sets among $(\widehat \cX_i)_{i=1}^L$ with probability~$O(n^{-1})$. Thus, Markov's inequality shows that a.a.s.\ the number of these elements is no more than $n^{1/6} = o(n^{1/3})$. On the other hand, for every pair of distinct $i_1, i_2\in [L]$, Chernoff's bound implies that a.a.s.\ the number of elements $j$ belonging to $\widehat \cX_{i_1}$ and $\widehat \cX_{i_2}$ and to no other set among $(\widehat \cX_i)_{i\in [L]\setminus \{i_1, i_2\}}$, and satisfying that $U_j = i_1$, is equal to $\tfrac{1}{2} \widehat q^2 n + o(\widehat q^2 n) = \tfrac12 c^2 n^{1/3} + o(n^{1/3})$. We conclude that a.a.s.\ for every $i\in [L]$, 
\[\widehat X_i - \widehat q n = \big(X_i - \widehat p n - \tfrac12(L-1)\widehat p^2 n + O(\widehat p^3 n)\big) + \tfrac12(L-1)c^2 n^{1/3} + o(n^{1/3}) = (X_i - \widehat p n)+o(n^{1/3}).\]
Combining this with~\eqref{eq:conv Ys} and the fact that $\sqrt{\widehat q(1-\widehat q)n} = (1+o(1))\sqrt{\widehat p(1-\widehat p)n}$ finishes the proof.
\end{proof}

\begin{lemma}\label{lem:separate}
Suppose that $np = c n^{2/3}$ for some constant $c > 0$. For every $\eps\in (0,1)$ and every positive integer $L$, there is $\delta = \delta(\eps, L, c) > 0$ such that the event 
\[\cG_{L, \eps} = \Big\{\min_{i,j\in [L]: i\neq j} |\szBsn{i} - \szBsn{j}|\ge \delta np^2\Big\}\]
holds with probability at least $1-\eps$.
\end{lemma}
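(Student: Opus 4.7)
The plan is to derive an anti-concentration bound for each pairwise difference $\szBsn{i}-\szBsn{j}$ with $i<j$ in $[L]$, and then take a union bound over the $\binom{L}{2}$ such pairs. Since the basins $\basin(1),\ldots,\basin(L)$ are pairwise disjoint, for any fixed $i<j$ in $[L]$ the difference $\szBsn{i}-\szBsn{j}$ can be written as a sum of $n-2k$ i.i.d.\ random variables indexed by the vertices $v\in V\setminus\vone$: for each such $v$, its contribution is $+1$ with probability $p_i=q^{i-1}p$ (if $v\in\basin(i)$), $-1$ with probability $p_j=q^{j-1}p$ (if $v\in\basin(j)$), and $0$ otherwise. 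Independence across $v$ follows from the fact that these contributions depend on disjoint sets of edges (those incident to $v$ and to $v_1,\ldots,v_j$).

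A direct computation yields
\[
\E[\szBsn{i}-\szBsn{j}]=(1+o(1))(j-i)np^2
\qquad\text{and}\qquad
\mathrm{Var}(\szBsn{i}-\szBsn{j})=(1+o(1))\cdot 2np,
\]
while the third absolute moment of each summand is $O(p)$. Since $np=cn^{2/3}\to\infty$, the Berry-Esseen inequality implies that the distribution of $\szBsn{i}-\szBsn{j}$ is uniformly within $O(1/\sqrt{np})=O(n^{-1/3})=o(1)$ of a normal random variable with matching mean and variance. Crucially, in the regime $np=cn^{2/3}$, the gap $np^2=c^2n^{1/3}$ and the standard deviation $\sqrt{2np}=\sqrt{2c}\,n^{1/3}$ are of the same order, so $\delta\,np^2/\sqrt{2np}=\delta c^{3/2}/\sqrt{2}$ is a bounded constant depending only on $\delta$ and $c$.

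Using the standard normal density bound $1/\sqrt{2\pi}$, the above normal approximation gives
\[
\Prob\bigl(|\szBsn{i}-\szBsn{j}|\le\delta\,np^2\bigr)\le\frac{\delta c^{3/2}}{\sqrt{\pi}}+o(1).
\]
Choosing $\delta=\delta(\eps,L,c)>0$ small enough that $\binom{L}{2}\cdot\delta c^{3/2}/\sqrt{\pi}<\eps/2$, and taking $n$ large enough that the cumulative $o(1)$ error over the $\binom{L}{2}$ pairs is at most $\eps/2$, a union bound over the pairs $(i,j)$ with $i<j$ in $[L]$ yields $\Prob(\cG_{L,\eps})\ge 1-\eps$, as required.

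The delicate point is that in the critical scaling $np=\Theta(n^{2/3})$, both $\sqrt{np}$ and $np^2$ are of order $n^{1/3}$, so the standard deviation and the mean gap of $\szBsn{i}-\szBsn{j}$ are of the same order of magnitude. Chernoff-type concentration is therefore insufficient to separate the basin sizes, and the argument must rely on a sharp anti-concentration estimate, which is what motivates the use of Berry-Esseen rather than Chernoff. The rest of the argument is a routine combination of the CLT, a density bound, and a union bound.
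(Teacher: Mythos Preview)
Your proof is correct and takes a genuinely different route from the paper. The paper first establishes a full joint convergence in distribution: after decomposing each $\szBsn{i}$ as $\sum_{S\subseteq [L]:\min S=i} X_S$ (where $X_S$ counts vertices adjacent exactly to $\{v_s:s\in S\}$ among $v_1,\ldots,v_L$), it uses an auxiliary ``bin thinning'' lemma (Lemma~\ref{lem:bin thinning}) to show that the normalized singleton counts $(X_i)_{i=1}^L$ converge jointly to i.i.d.\ standard normals, and that the higher-order $X_S$ concentrate around their means. This yields that the normalized basin sizes converge jointly to $(N_i+(L-i)c^{3/2})_{i=1}^L$, and the separation follows from the absolute continuity of this limit.

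Your argument bypasses the joint analysis entirely: by observing that for disjoint basins the difference $\szBsn{i}-\szBsn{j}$ is itself a sum of i.i.d.\ $\{-1,0,+1\}$-valued variables, you apply Berry--Esseen directly to each pairwise difference and union-bound. This is shorter and avoids the coupling lemma. The paper's approach, on the other hand, delivers the explicit joint limit, which is more informative (for instance, it immediately identifies the limiting law of the index of the largest basin, as alluded to in the concluding remarks).
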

\begin{proof}
For every set $S\subseteq [L]$, denote by $X_S$ the number of vertices in $V\setminus \vone$ that connect to all vertices in $S$ and do not connect to the vertices in $[L]\setminus S$. If $S = \{i\}$ or $S = \{i,j\}$ for some $i,j\in [L]$, for simplicity of notation we denote $X_i$ and $X_{i,j}$ instead of $X_{\{i\}}$ and $X_{\{i,j\}}$, respectively.

Similarly to the proof of Lemma~\ref{lem:bin thinning}, we have that a.a.s.
\begin{equation}\label{eq:ge 3}
\max_{S\subseteq [L]: |S|\ge 3} X_S = O(n^{1/6}),   
\end{equation} 
and
\begin{equation}\label{eq:Xij}
X_{i,j} = (1+o(1))\mathbb E X_{i,j} = (1+o(1))c^2n^{1/3}.
\end{equation}
Moreover, for every $i\in [L]$, denote 
\[Y_i = \frac{X_i - (n-2k)pq^{L-1}}{\sqrt{(n-2k)pq^{L-1}(1-pq^{L-1})}}.\]

Thus, by applying Lemma~\ref{lem:bin thinning} with $\widehat p = pq^{L-1}$ and $n-2k$ instead of $n$,
\begin{equation}\label{eq:convY}
(Y_1,\ldots,Y_L)\xrightarrow[n\to \infty]{d}(N_1,\ldots,N_L),
\end{equation}
where $(N_i)_{i=1}^L$ are i.i.d.\ normal random variables with mean $0$ and variance 1.

Now, it remains to notice that for every $i\in [L]$, $\szBsn{i} = \sum_{S\subseteq [L]: \min S = i} X_S$.
For every $i\in [L]$, denote 
\[Z_i = \frac{\szBsn{i} - (n-2k)pq^{L-1}}{\sqrt{(n-2k)pq^{L-1}(1-pq^{L-1})}} = Y_i + \sum_{S\subseteq [L]: \min S = i, |S|\ge 2} \frac{X_S}{\sqrt{(n-2k)pq^{L-1}(1-pq^{L-1})}}.\]
Then, combining~\eqref{eq:ge 3},~\eqref{eq:Xij},~\eqref{eq:convY} and the fact that $\sqrt{(n-2k)pq^{L-1}(1-pq^{L-1})} = (1+o(1)) \sqrt{c}n^{1/3}$ implies that 
\begin{equation}\label{eq:converge Z}
(Z_1,\ldots,Z_L)\xrightarrow[n\to \infty]{d}(N_i+(L-i)c^{3/2})_{i=1}^L.   
\end{equation}
In particular, there is a $\delta=\delta(\eps,L,c) > 0$ such that 
\[\min_{i,j\in [L]: i\neq j} |N_i-N_j+(j-i) c^{3/2}|\ge 2\delta c^{3/2}\]
with probability at least $1-\tfrac{\eps}{2}$, which combined with~\eqref{eq:converge Z}  implies that for all sufficiently large $n$, 
\[\min_{i,j\in [L]: i\neq j} |Z_i-Z_j|\ge \delta c^{3/2}\]
holds  with probability at least $1-\eps$. (Note that the factor $2$ disappeared to take into account the error coming from the convergence in distribution.) Coming back to $(\szBsn{i})_{i=1}^L$ finishes the proof.
\end{proof}

We will also need the following  lemma for larger values of $p$. 

\begin{lemma}\label{claim E'}
Suppose that $np\in [n^{3/4} (\log n)^{-1/2}, \sqrt{15} n^{3/4} (\log n)^{1/4}]$. Then, the event 
\[\cE' = \{\text{for each $\ell\in [2k]$, $\szBsn{\ell}\in \mathbb E \szBsn{\ell}\pm \ell\omega$}\}\] 
holds a.a.s.
\end{lemma}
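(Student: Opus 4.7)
The plan is to apply Chernoff's bound (Lemma~\ref{lem chernoff}) to each basin size $\szBsn{\ell}$ individually and then take a union bound over $\ell\in[2k]$. Recall that $\szBsn{\ell}\in\mathrm{Bin}(n-2k,\onemp^{\ell-1}p)$ with $\mathbb E\szBsn{\ell}=(1+o(1))np$, since $\ell p\le 2kp = o(1)$ and $k = o(n)$. Also, in the regime considered, $p\ge n^{-1/4}(\log n)^{-1/2}$, which yields $k\le 15(\log n)^{3/2}$, so the union bound will only cost a polylogarithmic factor.

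First, I would check that the deviation $\ell\omega$ is comfortably smaller than the mean. Indeed, $\ell\omega\le 2k\cdot n^{3/4}p^{5/4}$ and $\mathbb E\szBsn{\ell}\sim np$, so the ratio is of order $kn^{-1/4}p^{1/4}$, which is at most polylogarithmic in $n$ divided by a positive power of $n$; hence $\ell\omega = o(\mathbb E\szBsn{\ell})$. Therefore the ``small deviation'' form of Chernoff's bound applies and gives
\[
\Prob(|\szBsn{\ell}-\mathbb E\szBsn{\ell}|\ge \ell\omega)
\le 2\exp\!\Big(-\tfrac{(\ell\omega)^2}{3\,\mathbb E\szBsn{\ell}}\Big)
\le 2\exp\!\Big(-\tfrac{\ell^2\omega^2}{4np}\Big).
\]

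Next, I would estimate the exponent. Since $\omega^2 = n^{3/2}p^{5/2}$, we have
\[
\frac{\ell^2\omega^2}{4np} \;\ge\; \frac{\omega^2}{4np} \;=\; \frac{n^{1/2}p^{3/2}}{4}.
\]
Using the lower bound $p\ge n^{-1/4}(\log n)^{-1/2}$ of the hypothesis, this exponent is at least $\tfrac{1}{4}n^{1/8}(\log n)^{-3/4}$, which tends to infinity much faster than $\log k = O(\log\log n)$.

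Finally, a union bound over $\ell\in[2k]$ yields
\[
\Prob(\overline{\cE'})
\le \sum_{\ell=1}^{2k}2\exp\!\Big(-\tfrac{\ell^2\omega^2}{4np}\Big)
\le 4k\exp\!\Big(-\tfrac{n^{1/8}}{4(\log n)^{3/4}}\Big)
= o(1),
\]
which proves the lemma. There is no genuine obstacle here: the statement is a routine two-sided concentration check, and the only slightly delicate points are verifying that $\ell\omega$ is in the ``Gaussian regime'' of Chernoff and that the choice $\omega = n^{3/4}p^{5/4}$ interacts correctly with the lower bound $np\ge n^{3/4}(\log n)^{-1/2}$ so that the exponent dominates $\log k$.
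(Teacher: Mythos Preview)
Your proof is correct and follows essentially the same approach as the paper: verify $\ell\omega=o(\mathbb E\szBsn{\ell})$, apply Chernoff's bound to each $\szBsn{\ell}$ to get a bound of the form $\exp(-c\,\ell^2\omega^2/(np))=\exp(-c\,\ell^2(np^3)^{1/2})$, and union bound over $\ell\in[2k]$. The paper sums the resulting series a bit more carefully (keeping the $\ell^2$ in the exponent), whereas you bound all terms by the $\ell=1$ term and pay a harmless factor of $2k=O((\log n)^{3/2})$; both are fine since $(np^3)^{1/2}\ge n^{1/8}(\log n)^{-3/4}\to\infty$ in this regime.
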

\begin{proof}
Since $p\gg n^{-1/3}$,
\[
(np^{-1})^{1/4} k^{-1}  
= \Omega((n^3p^7)^{1/4}(\log n)^{-1/2})
= \Omega(n^{1/6} (\log n)^{-1/2}) \gg 1.
\]
Hence, $\mathbb E \szBsn{\ell} \sim np = (np^{-1})^{1/4} \omega \gg k\omega\ge \ell\omega$. 
Recalling that $\omega/(np)^{1/2}=(np^3)^{1/4}$ and applying Chernoff's bound we obtain, for $n$ large enough and uniformly in $\ell \in [2k]$,
\begin{equation*}
\Prob(\szBsn{\ell}\notin \mathbb E \szBsn{\ell}\pm\ell\omega)\le 2\exp\left(-\frac{(\ell \omega)^2}{2(\mathbb E \szBsn{\ell} + \ell \omega)}\right) \le \exp\left(-\frac13 \ell^2 (np^3)^{1/2}\right).
\end{equation*}
In particular, a union bound yields
\begin{align*}
\Prob(\exists \ell\in [2k], \szBsn{\ell}\not\in \mathbb E \szBsn{\ell}\pm \ell\omega)
&\le \sum_{\ell=1}^{2k} \exp\left(-\frac13 \ell^2 (np^3)^{1/2}\right)\\
&\le \exp\left(-\frac14(np^3)^{1/2}\right) =o(1),
\end{align*}
which proves the lemma.
\end{proof}

\begin{remark}\label{rem E'} 
When $np\in [n^{3/4} (\log n)^{-1/2}, \sqrt{15} n^{3/4} (\log n)^{1/4}]$, on the event $\cE'$ we have, for~$n$ large and uniformly in $\ell \in [2k-1]$,
\begin{equation}\label{eq difference}
\begin{split}
\szBsn{\ell} - \szBsn{\ell+1}
&\ge\; (\onemp^{\ell-1}p(n-2k)-\omega\ell) - (\onemp^{\ell}p(n-2k)+\omega(\ell+1))\\
&=\; \onemp^{\ell-1} p^2 (n-2k) - \omega(2\ell+1)\ge \tfrac{1}{1.4} np^2.
\end{split}
\end{equation}
In particular, the conclusion of Lemma~\ref{claim E} still holds. 
\end{remark}

\begin{lemma}\label{claim F}
Suppose that $(n \log n)^{1/2} \ll np \le \sqrt{15} n^{3/4} (\log n)^{1/4}$. The event $$\cF=\left\{\text{$\frac 43 \, knp \le \stwo \le \frac 83 \, knp$}\right\}$$ holds a.a.s.
\end{lemma}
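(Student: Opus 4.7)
The plan is to identify $\stwo$ as a binomial random variable and then apply a straightforward two-sided Chernoff bound. The key observation is that $\vtwo = N(\vone)\setminus \vone$ and, for each vertex $v \in V\setminus \vone$, the event $\{v\in \vtwo\}$ depends only on the $2k$ potential edges joining $v$ to $\vone$. Since these edge-sets are disjoint across different choices of $v$, the indicators $\{\mathbf 1[v\in \vtwo]\}_{v \in V\setminus \vone}$ are mutually independent, each with success probability $1-\onemp^{2k}$. Consequently, $\stwo \sim \mathrm{Bin}(n-2k,\,1-\onemp^{2k})$.

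I would then estimate the mean. From the definition of $k$ in~\eqref{eq:defk} one has $kp \sim 15 p^{-1}(n^{-1}\log n)^{1/2}$, and the hypothesis $np \gg (n\log n)^{1/2}$ gives $p^{-1} = o\bigl((n/\log n)^{1/2}\bigr)$, so $kp = o(1)$. The Taylor expansion $1-\onemp^{2k} = 2kp + O((kp)^2) = (2+o(1))kp$, combined with $k = o(n)$, therefore yields $\mathbb E\stwo = (2+o(1))knp$. In particular, for $n$ large enough, $\mathbb E \stwo$ lies in $[\tfrac{7}{4}knp,\, \tfrac{9}{4}knp]$, so a deviation of at most $\tfrac{1}{6}\mathbb E\stwo$ in either direction suffices to place $\stwo$ in $[\tfrac{4}{3}knp,\, \tfrac{8}{3}knp]$.

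Finally, I would invoke Chernoff's bound (Lemma~\ref{lem chernoff}) with $t = \tfrac{1}{6}\mathbb E\stwo$ in each direction; this gives a tail probability of $\exp\bigl(-\Omega(\mathbb E\stwo)\bigr) = \exp\bigl(-\Omega(knp)\bigr)$. To see that this is $o(1)$, using the definition of $k$ I would write $knp \sim 15 p^{-1} n^{1/2}(\log n)^{1/2}$, and the upper-bound hypothesis $np \le \sqrt{15}\, n^{3/4}(\log n)^{1/4}$ implies $p^{-1} \ge \tfrac{1}{\sqrt{15}}\, n^{1/4}(\log n)^{-1/4}$, so $knp \ge \sqrt{15}\, n^{3/4}(\log n)^{1/4} \to \infty$. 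The union of the two Chernoff tail events is therefore negligible, so $\cF$ holds a.a.s. There is no real obstacle here; once the crucial independence observation has been made in the first step, the concentration is routine.
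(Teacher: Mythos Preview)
Your proof is correct and follows essentially the same approach as the paper's: identify $\stwo\in\mathrm{Bin}(n-2k,1-\onemp^{2k})$, use $kp=o(1)$ to get $\mathbb E\stwo=(2+o(1))knp$, and finish with Chernoff. You have merely made explicit the independence argument and the verification that $knp\to\infty$, both of which the paper leaves implicit.
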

\begin{proof}
Since $(\log n/n)^{1/2}\ll p \le \sqrt{15} (\log n/n)^{1/4}$, we get that $kp = \Theta( p^{-1}(\log n/n)^{1/2} ) = o(1)$. As a result, since $\stwo \in \mathrm{Bin}(n-2k, 1-\onemp^{2k})$ and $k = o(n)$, 
we have $\mathbb E\stwo=(n-2k)(1-q^{2k})=(2-o(1))knp$ and the lemma follows directly from Chernoff's bound.
\end{proof}

Note that Lemma~\ref{claim F} holds for a wider range of $np$ than needed in this section, and it will be used in the proof of both the first and the second point of Theorem~\ref{thm 1}.

\subsubsection{Consequences of the basin sizes: the second round}

In this section, we mostly concentrate on the regime $n^{2/3} \ll np \le \sqrt{15} n^{3/4} (\log n)^{1/4}$. Modifications for the regime $np = \Theta(n^{2/3})$ are minor and mostly consist in the fact that the event $\cE_L$ (defined as in Remark~\ref{rem E:IR}) does not concern the basins of $v_2, \ldots, v_L$. We discuss these modifications in remarks after the corresponding lemmas for the first regime.

Let us denote 
\begin{equation*}\label{eq q}
\qc = \qc(n, p) = \frac{1}{2} + \frac{1}{5}\min\Big\{1, \frac{\sqrt{np^4}}{2}\Big\}.
\end{equation*}

Next, we show that conditionally on $\cE\cap\cF$, a.a.s.\ after the second round, the number of vertices of label~$\ell$ in Level 3 decreases as a function of~$\ell$. Here, Lemma~\ref{lemma 2} is crucial since it establishes that the label of almost all vertices~$v$ in Level 3 can a.a.s.\ be attributed based only on the edges between $v$ and vertices in Level~2. In fact, the vertices for which the above does not hold are so few that it will be convenient to think of them as very rare exceptions that, as we shall see, do not have any significant influence on LPA when $n$ is large.
In order to formalize this, we now define an alternative label propagation procedure.

\paragraph{The Alternative Label Attribution Procedure (ALAP).}  At round 1, for every $i\in [2k]$, expose consecutively the edges from $v_i$ to $V\setminus (N(V([i-1]))\cup A)$. Also, expose all edges between vertices in $A$. On the basis of these edges, attribute labels to the vertices in $A\cup B$ only (which coincide with the labels these vertices receive after round 1 in LPA).

At round 2, given a vertex $u\in B\cup C$, expose the edges from $u$ to $B$ and denote by $U\subseteq [2k]$ the set of labels such that $u$ has the same number of neighbors in $B_1(i)$ for every $i \in U$, and strictly less in $B_1(i)$ for every $i \in [2k] \setminus U$. Then, we pick one label from $U$ uniformly at random and attribute it to $u$.

Finally, at round 3, we expose the edges between vertices in $C$ and let ALAP evolve according to the same rules as LPA on the set of labels present after round 2.

\vspace{1em}

Two important remarks are due at this point. 
First, note that by Lemma~\ref{lemma 2} LPA and ALAP on $G(n,p)$ can be coupled so that a.a.s.\ only the vertices in $V([K]\setminus [2k])$ may receive different labels at the second round. We call such incorrect label attributions (from the point of view of LPA) \emph{mistakes} of ALAP and show that, essentially, their presence does not affect the re-partition of the remaining labels. Second, note that the edges exposed at the first round of ALAP allow to define the basins $(B_1(i))_{i=1}^{2k}$ (which coincide with the basins for LPA). In particular, all events defined in Section~\ref{sec:3.2.1} are measurable with respect to the edges exposed at round 1 of ALAP and the results from that section still hold. Thus, from this point on, by abuse we use the notation and terminology introduced for LPA for ALAP instead.

\begin{lemma}\label{claim 3}
Suppose that $n^{2/3} \ll np \le \sqrt{15} n^{3/4} (\log n)^{1/4}$. The event ``after the second round of ALAP, there are at least $\tfrac12(2\qc-1)n$ vertices in Level $3$ with label $1$, and the number of vertices with label $\ell\in [2,2k]$ is at least by $\frac12 \ssthree{2}{1} \big(1-\big(\tfrac{1-\qc}{\qc}\big)^{\ell-1}\big)$ smaller than the number of vertices with label $1$'', that is, 
\[
\Big\{\ssthree{2}{1}\geq \tfrac12(2\qc-1)n \text{ and for every } \ell\in [2,2k], \ssthree{2}{1} - \ssthree{2}{\ell}\geq \tfrac12\big(1-\big(\tfrac{1-\qc}{\qc}\big)^{\ell-1}\big)\ssthree{2}{1}\Big\},
\]
holds a.a.s.
\end{lemma}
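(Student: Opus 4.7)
The plan is to condition throughout on the a.a.s.\ event $\cE\cap\cF$ given by Lemmas~\ref{claim E} and~\ref{claim F}, so that $\szBsn{1}\in np\pm\omega$, $\szBsn{1}-\szBsn{\ell}\ge (\ell-1)np^2/1.4$ for every $\ell\in[2,2k]$, and $|B|=\Theta(knp)$. By Lemma~\ref{lemma 2}, it is enough to analyze ALAP instead of LPA, because at most $O((\log n)/p)=o(n)$ vertices (those of $V([K]\setminus[2k])$) can receive different labels under the two procedures. The decisive structural feature of ALAP is that at the end of round~$1$ no edge incident to $B\cup C$ has been exposed; hence, conditionally on the basin sizes, the random variables $\xi_\ell(u):=|N(u)\cap \basin(\ell)|$ for $u\in C$ and $\ell\in[2k]$ are mutually independent, with $\xi_\ell(u)\sim \mathrm{Bin}(\szBsn{\ell},p)$. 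In particular, the indicators $\mathbf{1}(u\in C_2(\ell))$ are independent across $u\in C$, which will unlock Chernoff-type concentration.

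For the upper bound on $\ssthree{2}{\ell}$ (the second assertion), I apply Lemma~\ref{lem:bis} for each $u\in C$ and each $\ell\in[2,2k]$ with $X=\xi_1(u)$, $Y=\xi_\ell(u)$, $M=\max_{j\in[2k]\setminus\{1,\ell\}}\xi_j(u)$, and $\rho$ equal to the number of indices achieving this maximum. Averaging over $(M,\rho)$ yields
\[
\frac{\Prob(u\in C_2(1))}{\Prob(u\in C_2(\ell))}\;\ge\;\frac{\Prob(\xi_1>\xi_\ell)+\tfrac12\Prob(\xi_1=\xi_\ell)}{\Prob(\xi_\ell>\xi_1)+\tfrac12\Prob(\xi_\ell=\xi_1)}\;\ge\;\frac{\qc_\ell^*}{1-\qc_\ell^*},
\]
where Lemma~\ref{lem difference binomials} with $a_1=\szBsn{1}$, $a_2=\szBsn{\ell}$ gives $\qc_\ell^*\ge \tfrac12+\tfrac15\min\{1,(\ell-1)\sqrt{np^4}/(1.4\sqrt{2})\}\ge \qc$ for every $\ell\in[2,2k]$. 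A short case analysis then verifies the elementary inequality $(1-\qc_\ell^*)/\qc_\ell^*\le \tfrac12+\tfrac12\bigl((1-\qc)/\qc\bigr)^{\ell-1}$, which is trivial once $\qc_\ell^*\ge 2/3$ and a one-line Taylor expansion on the remaining range. Taking expectations, summing over $u\in C$, and combining Chernoff's inequality with a union bound over $\ell\in[2,2k]$ converts the expectation inequality into the claimed a.a.s.\ bound (the gap between $\qc_\ell^*/(1-\qc_\ell^*)$ and $\tfrac12+\tfrac12((1-\qc)/\qc)^{\ell-1}$ absorbing the $(1+o(1))$ concentration error).

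The step I expect to be the main obstacle is the first inequality $\ssthree{2}{1}\ge \tfrac12(2\qc-1)n$, because the pairwise bounds above combined with $\sum_\ell \Prob(u\in C_2(\ell))=1-o(1/n)$ only give $\mathbb E\ssthree{2}{1}\gtrsim n/k$, which misses the target by a $\sqrt{\log n}$ factor whenever $\sqrt{np^4}/2<1$. To recover this factor I would run a Gaussian approximation legitimized by Berry--Esseen in the same spirit as Lemma~\ref{lem difference binomials}. Setting $\mu_\ell=\szBsn{\ell}p$, $\sigma=\sqrt{\szBsn{1}p\onemp}$, and $\delta_\ell=(\mu_1-\mu_\ell)/\sigma$ (so that $\delta_\ell\ge (\ell-1)\sqrt{np^4}/(1.4\sqrt{2})$ on $\cE$), one has, up to a $1+o(1)$ factor,
\[
\Prob(u\in C_2(1))\;\ge\;\mathbb E_{Z\sim\mathcal N(0,1)}\prod_{\ell=2}^{2k}\Phi(Z+\delta_\ell).
\]
I would split the product at the threshold $\delta_\ell=1$: the $L\asymp 1/\sqrt{np^4}$ close indices contribute at least $\Phi(Z)^L$, whose Gaussian integral equals $1/(L+1)$ and is concentrated near $z^*\sim\sqrt{2\log L}$ by Laplace's method, while for such $Z$ the $2k-L$ far indices contribute $1-o(1)$ because $k\sqrt{np^4}=\Theta(\sqrt{\log n})$ forces $(2k-L)\log\Phi(Z+1)=o(1)$. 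This delivers $\Prob(u\in C_2(1))\ge c/L\ge c(2\qc-1)$ for some explicit constant $c>\tfrac12$, hence $\mathbb E\ssthree{2}{1}\ge c(2\qc-1)n(1-o(1))$, and Chernoff's inequality (using $(2\qc-1)n\gg 1$ in the range under consideration) turns this into the desired a.a.s.\ lower bound.
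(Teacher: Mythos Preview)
Your overall strategy---condition on $\cE\cap\cF$, apply Lemma~\ref{lem:bis} to compare $p_1=\Prob(u\in C_2(1))$ and $p_\ell=\Prob(u\in C_2(\ell))$ for each $u\in C$, invoke Lemma~\ref{lem difference binomials}, and concentrate via Chernoff---is exactly the paper's. The divergence is in the \emph{strength} of the pointwise inequality you extract. You only argue that $p_\ell/p_1\le (1-\qc_\ell^*)/\qc_\ell^*\le \tfrac12+\tfrac12\bigl((1-\qc)/\qc\bigr)^{\ell-1}$, which suffices for the gap assertion but, as you yourself observe, is too weak to sum to anything better than $p_1\gtrsim 1/k$. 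The paper instead proves the sharper bound
\[
p_\ell\;\le\;\Big(\frac{1-\qc}{\qc}\Big)^{\ell-1}p_1 \qquad\text{for every }\ell\in[2,2k],
\]
after which the first assertion drops out immediately from the geometric series $1=\sum_\ell p_\ell\le p_1\cdot \qc/(2\qc-1)$, with no Gaussian detour needed. Establishing this stronger inequality is the heart of the proof: in the sub-range $np^4=o((\log n)^{-1})$ it is done by comparing the Berry--Esseen ratio $\bigl(1-\Phi(\tfrac{(\ell-1)\sqrt{np^4}}{2\sqrt{1+p}})\bigr)/\Phi(\cdots)$ directly against $((1-\qc)/\qc)^{\ell-1}$ via a three-way case analysis on the size of $(\ell-1)\sqrt{np^4}$; in the sub-range $np^4\ge (\log n)^{-2}$ the paper additionally conditions on the event $\cE'$ of Lemma~\ref{claim E'} (consecutive basin gaps $\szBsn{\ell}-\szBsn{\ell+1}\ge np^2/\sqrt{2}$) and chains the one-step comparison $p_{\ell+1}\le \tfrac{1-\qc}{\qc}\,p_\ell$ inductively.

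Your Gaussian-maximum argument for the first assertion is more delicate than you indicate, and the stated justification has a gap. The claim ``$(2k-L)\log\Phi(Z+1)=o(1)$'' uses the \emph{worst} far term for all $2k-L$ indices; when $np^4$ is only polylogarithmically small (e.g.\ $np^4\asymp (\log n)^{-2}$, near the top of the range) one has $L=\Theta(\log n)$, $z^*=\Theta(\sqrt{\log\log n})$, and $(2k-L)(1-\Phi(z^*+1))\asymp \sqrt{\log n}\cdot e^{-\Theta(\sqrt{\log\log n})}\to\infty$, not $0$. A finer summation $\sum_{\ell>L}(1-\Phi(z^*+\delta_\ell))$ can be salvaged when $np^4\to 0$, but the regime $np^4=\Omega(1)$ still requires a separate treatment---which is precisely why the paper introduces the $\cE'$-based chaining there. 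In short, the geometric-series route via the stronger pointwise bound is both simpler and covers the full range uniformly; your weaker pairwise bound forces you into an avoidable second argument that, as written, does not go through.
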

\begin{proof}
Fix $t_n = np^2/\sqrt{2}$. 
Condition on $\cE$ (as in Lemma~\ref{claim E}), on $\cF$ (as in Lemma~\ref{claim F}), and the edges (and non-edges) incident to all vertices in Level 1. Moreover, if 
\[np\in [n^{3/4} (\log n)^{-1/2}, \sqrt{15} n^{3/4} (\log n)^{1/4}],\] condition on $\cE'$ as well (as in Lemma~\ref{claim E'}). Note that, by definition of ALAP, $(\szBsn{\ell})_{\ell=1}^{2k}$ are all measurable in terms of the edges between Levels~1 and~2. Since we are conditioning on $\cE$ and $\cF$ which hold a.a.s.~(by Lemma~\ref{claim E} and Lemma~\ref{claim F}), if $np\le n^{3/4} (\log n)^{-1/2}$, it is sufficient to show the conclusion of the lemma conditionally on $\cE\cap\cF$, while if $np\in [n^{3/4} (\log n)^{-1/2}, \sqrt{15} n^{3/4} (\log n)^{1/4}]$, we condition on $\cE\cap\cE'\cap\cF$ instead (we may do so since $\cE'$ holds a.a.s.).

Fix a vertex $u\in C$ and, for every $\ell\in [2k]$, define $p_\ell$ as the probability that $u$ received label $\ell$ in ALAP after round 2.
Our goal will be to compare $p_1$ and $p_\ell$. First, recall that, conditionally under the event $\cE \cap \cF$ (or $\cE \cap \cE' \cap \cF$, respectively), we have $\szBsn{1} \ge \szBsn{\ell} + (\ell-1)t_n$.
Then, we apply Lemma~\ref{lem:bis} with $n = \szBsn{1}$, $n' = \szBsn{\ell}$, $M$ equal to the maximum number of edges 
between $u$ and $v_i$ for $i\in [2k]\setminus \{1,\ell\}$, and $\rho$ being the number of times that this maximum is attained over the set $i\in [2k]\setminus \{1,\ell\}$. This shows that
\begin{equation}\label{eq:appl1:bis}
\frac{p_1}{p_{\ell}} 
\ge \frac{\Prob(\mathrm{Bin}(\szBsn{1}, p) > \mathrm{Bin}(\szBsn{\ell}, p))+\tfrac{1}{2}\Prob(\mathrm{Bin}(\szBsn{1}, p)= \mathrm{Bin}(\szBsn{\ell}, p))}{\Prob(\mathrm{Bin}(\szBsn{\ell}, p) > \mathrm{Bin}(\szBsn{1}, p))+\tfrac{1}{2}\Prob(\mathrm{Bin}(\szBsn{\ell}, p)= \mathrm{Bin}(\szBsn{1}, p))}.    
\end{equation}
Using the inequality $\szBsn{1} \ge \szBsn{\ell} + (\ell-1)t_n$, the latter further implies that
\begin{equation}\label{eq:appl2:bis}
\begin{split}
\frac{p_1}{p_1 + p_{\ell}} 
&\ge \Prob(\mathrm{Bin}(\szBsn{1}, p) > \mathrm{Bin}(\szBsn{\ell}, p))+\frac{1}{2}\Prob(\mathrm{Bin}(\szBsn{1}, p) = \mathrm{Bin}(\szBsn{\ell}, p))\\
&\ge \Prob(\mathrm{Bin}(\szBsn{1}, p)\ge \mathrm{Bin}(\szBsn{1}-(\ell-1)t_n, p)+1).
\end{split}
\end{equation}
 
Further, we apply Lemma~\ref{lem difference binomials} with $a_1 = \szBsn{1}$, $a_2 = \szBsn{1}-(\ell-1)t_n-1/p = (1-o(1))np$ (where we use that the difference of $(\ell-1)np^2/1.4$ guaranteed between the basins by the event $\cE$ dominates $(\ell-1)t_n+1/p$), $M = 1$ and $p$; note that its assumptions are satisfied as, under the event $\cE$, we have that $1\ll a_2\le a_1$ and $\min\{a_2, a_1-a_2\}p \ge t_np = np^3/\sqrt{2}\gg 1$. Since 
\[\frac{(a_1-a_2)p-M}{\sqrt{(a_1+a_2)pq}}\ge \frac{(\ell-1)t_np}{\sqrt{2pa_1}}=\frac{(\ell-1)np^3}{2\sqrt{pa_1}}\]
and $\tfrac{np}{4}\le a_2\le a_1\leq (n-2k)p + \omega \le np(1+p)$, we deduce that
\begin{equation}\label{eq p1pell}
\frac{p_1}{p_1 + p_{\ell}} \ge \Phi\Big(\frac{(\ell-1)np^3}{2\sqrt{np^2(1+p)}}\Big) - \frac{4\zeta}{\sqrt{np^2}},
\end{equation}
which leads to
\begin{equation}\label{eq transform}
p_{\ell}\leq \frac{1-\Phi\Big(\frac{(\ell-1)np^3}{2\sqrt{np^2(1+p)}}\Big) + \frac{4\zeta}{\sqrt{np^2}}}{ \Phi\Big(\frac{(\ell-1)np^3}{2\sqrt{np^2(1+p)}}\Big) - \frac{4\zeta}{\sqrt{np^2}}}p_1 = \frac{1-\Phi\Big(\tfrac{(\ell-1)\sqrt{np^4}}{2\sqrt{1+p}}\Big) + \tfrac{4\zeta}{\sqrt{np^2}}}{\Phi\Big(\tfrac{(\ell-1)\sqrt{np^4}}{2\sqrt{1+p}}\Big) - \tfrac{4\zeta}{\sqrt{np^2}}}p_1.  
\end{equation}

Now, we show that the expression on the right-hand side is at most $(\tfrac{1-\qc}{\qc})^{\ell-1} p_1$. We do this in two steps. First, suppose that $np^4 =o((\log n)^{-1})$. Note that 
\begin{equation}
\begin{split}\label{eq lambda}
\left(\frac{1-\qc}{\qc}\right)^{\ell-1} &=\; \left(\frac{1-\sqrt{np^4}/5}{1+\sqrt{np^4}/5}\right)^{\ell-1} = \Big(1-\frac{2}{5}\sqrt{np^4} + O(np^4)\Big)^{\ell-1}\\
&=\; \exp\Big(-\frac25(\ell-1)\sqrt{np^4}+O((\ell-1)np^4)\Big),    
\end{split}
\end{equation}
and if $(\ell-1)\sqrt{np^4} \le \eps$ for some sufficiently small $\eps > 0$, then using that $(1+p)^{-1/2} = 1 + O(p)$, we get
\[\Phi\left(\tfrac{(\ell-1)\sqrt{np^4}}{2\sqrt{1+p}}\right) - \frac{4\zeta}{\sqrt{np^2}} = \frac{1}{2} +  \frac{(\ell-1)\sqrt{np^4}}{2\sqrt{2\pi}} + O\Big((\ell-1)^2np^4+(\ell-1)\sqrt{np^4}\cdot p+\frac{1}{\sqrt{np^2}}\Big).\]
Consequently, using that $(\ell-1)\sqrt{np^4}\cdot p = o((\ell-1)^2 np^4)$,
\begin{equation}\label{eq Phi}
\frac{1-\Phi\Big(\tfrac{(\ell-1)\sqrt{np^4}}{2\sqrt{1+p}}\Big) + \tfrac{4\zeta}{\sqrt{np^2}}}{\Phi\Big(\tfrac{(\ell-1)\sqrt{np^4}}{2\sqrt{1+p}}\Big) - \tfrac{4\zeta}{\sqrt{np^2}}} = 1 - \frac{2(\ell-1)\sqrt{np^4}}{\sqrt{2\pi}} + O\Big((\ell-1)^2np^4+\frac{1}{\sqrt{np^2}}\Big),
\end{equation}
and using that $\tfrac{2}{5}< \tfrac{2}{\sqrt{2\pi}}$ shows the desired inequality when $\eps$ is sufficiently small. On the other hand, when $\eps^{-1}\ge (\ell-1)\sqrt{np^4}\ge \eps$, the inequalities $np^2 \gg 1$ and $\ell n p^4\le k n p^4 = O(\sqrt{np^4 \log n}) = o(1)$ ensure that it is sufficient to prove that for every $x > 0$, $\tfrac{1-\Phi(x)}{\Phi(x)} < \exp(-\tfrac{4}{5}x)$, or equivalently 
\[\Phi(x)(1+\exp(-\tfrac{4}{5}x)) - 1 > 0,\]
and then use this inequality for $x = \frac{1}{2}(\ell-1)\sqrt{np^4}$. The latter could be checked via tedious analysis; we provide a link\footnote{\tiny{\url{https://www.wolframalpha.com/input?key=&i=\%281\%2F2+\%2B+erf\%28x\%2Fsqrt\%282\%29\%29\%2F2\%29*\%28exp\%28-4x\%2F5\%29\%2B1\%29-1}}} with a numerical justification instead (using that $\Phi(x) = \tfrac{1}{2} + \tfrac{1}{2}\mathrm{erf}(\tfrac{x}{\sqrt{2}})$, where $\mathrm{erf}$ is the error function). Finally, if $(\ell-1)\sqrt{np^4}\ge \eps^{-1}$ for some sufficiently small $\eps$, using that $\frac{x}{1-x}\le 2x$ when $x$ is small together with~\eqref{eq lambda}, the left-hand side of~\eqref{eq Phi} is at most
\[2\Big(\Big(1-\Phi\Big(\frac{(\ell-1)\sqrt{np^4}}{2\sqrt{1+p}}\Big)\Big) + \frac{4\zeta}{\sqrt{np^2}}\Big)\le 2\exp\Big(-\frac{(\ell-1)^2 np^4}{8(1+p)}\Big) + \frac{8\zeta}{\sqrt{np^2}} \ll \Big(\frac{1-\qc}{\qc}\Big)^{\ell-1},\]
where to show that $1/\sqrt{np^2}\ll \left(\tfrac{1-\qc}{\qc}\right)^{\ell-1}$, we used that $(\ell-1)\sqrt{np^4}\le k\sqrt{np^4}\le \sqrt{15\log n}\ll \log(np^2)$.

Now, suppose that $(\log n)^{-2}\le np^4\le 225 \log n$ or equivalently 
\[np\in [n^{3/4} (\log n)^{-1/2}, \sqrt{15} n^{3/4} (\log n)^{1/4}].\] 
Then, on the event $\cE'$, Remark~\ref{rem E'} ensures that $\szBsn{\ell} - \szBsn{\ell+1} \ge t_n = np^2 / \sqrt{2}$ for every $\ell\in [2k-1]$. By an application of Lemma~\ref{lem:bis} similar to~\eqref{eq:appl1:bis} and~\eqref{eq:appl2:bis}, we obtain that 
\begin{align*}
\frac{p_{\ell}}{p_{\ell} + p_{\ell+1}} 
&\ge \Prob(\mathrm{Bin}(\szBsn{\ell}, p)\! > \mathrm{Bin}(\szBsn{\ell+1}, p))+\frac{1}{2}\Prob(\mathrm{Bin}(\szBsn{\ell}, p) = \mathrm{Bin}(\szBsn{\ell+1}, p))\\
&\ge \Prob(\mathrm{Bin}(\szBsn{\ell}, p)\! \ge \mathrm{Bin}(\szBsn{\ell}-t_n, p)+1),
\end{align*}
implying that
\[
p_{\ell+1} \leq \frac{1-\mathbb{P}(\mathrm{Bin}(\szBsn{\ell},p)-\mathrm{Bin}(\szBsn{\ell}-t_n,p)\geq 1)}{\mathbb{P}(\mathrm{Bin}(\szBsn{\ell},p)-\mathrm{Bin}(\szBsn{\ell}-t_n,p)\geq 1)}p_\ell.
\]
Applying Lemma~\ref{lem difference binomials} for $a_1 = \szBsn{\ell}$, $a_2 = \szBsn{\ell}-t_n$, $M = 1$ and $p$ leads to
$p_{\ell+1}\leq \frac{1-\qc}{\qc}p_{\ell}$, which by an immediate induction leads to $p_{\ell+1} \leq (\frac{1-\qc}{\qc})^{\ell}p_1$.

Thus, recalling that $(p_{\ell})_{\ell=1}^{2k}$ adds up to $1$,
\begin{align}\label{eq:sum p}
1=\sum_{\ell=1}^{2k}p_{\ell}\leq p_1\sum_{\ell=0}^{\infty}\Big(\frac{1-\qc}{\qc}\Big)^{\ell}
= {p_1\frac{\qc}{2\qc-1}}.
\end{align}
Now, for all $\ell\in [2k]$, recall that $\ssthree{2}{\ell}$ equals the number of vertices in Level 3 that get label $\ell$ at the second round. Since our vertex labeling procedure and the original algorithm may be coupled so that a.a.s.\ all vertices receive the same labels in both at the second round, we abuse notation and identify $\ssthree{2}{\ell}$ with the number of vertices in Level 3 that get label $\ell$ at the second round in the procedure.

If $np^4\ge 4$, then $\qc = 1/2 + 1/5$ and $k\le \lceil 15 (\log n)^{1/2}/2\rceil$. Combining this with~\eqref{eq:sum p} shows that $\mathbb E \ssthree{2}{1}\ge p_1\sthree\ge \tfrac{n}{2}\gg \log n$. Hence, by Chernoff's bound,
\begin{equation}
\begin{split}\label{eqn concentration Ci}
&\mathbb P\big(\ssthree{2}{1}\in p_1\sthree\pm 2(p_1\sthree\log n)^{1/2}\big)=1-o(n^{-1}),\\
&\mathbb P\big(\ssthree{2}{\ell}\le p_{\ell}\sthree + \max\{2(p_{\ell}\sthree\log n)^{1/2}, (\log n)^2\}\big)=1-o(n^{-1}).
\end{split}
\end{equation}

In particular, with probability $1-o(n^{-1})$, we get that $\ssthree{2}{1}\ge \tfrac{3}{4} p_1\sthree \ge \tfrac{2}{3} p_1 n \ge \tfrac{1}{2} (2\qc-1) n$, which proves the first part of the lemma. 
On the other hand,~\eqref{eqn concentration Ci} implies that for every $\ell\in [2,2k]$, with probability $1-o(n^{-1})$,
\begin{align*}
\ssthree{2}{1} - \ssthree{2}{\ell} \ge\;
& \big(p_1\sthree - 2(p_1\sthree\log n)^{1/2}\big) - \big(p_\ell\sthree+ \max\{2(p_{\ell}\sthree\log n)^{1/2}, (\log n)^2\}\big) \\
\ge\; 
& (p_1-p_{\ell})\sthree - 4(p_1\sthree\log n)^{1/2}\\
\ge\; 
&\frac{2}{3}\Big(1 - \Big(\frac{1-\qc}{\qc}\Big)^{\ell-1}\Big) p_1\sthree\\
\ge\; 
&\frac{1}{2}\Big(1-\Big(\frac{1-\qc}{\qc}\Big)^{\ell-1}\Big) \ssthree{2}{1},
\end{align*}
and by a union bound the statement of the lemma follows for the case $np^4 \ge 4$.

Now, consider the case $np^4 < 4$. 
By Chernoff's bound we have that for every $\eps > 0$,
$$
\Prob(|\ssthree{2}{1} - \mathbb E \ssthree{2}{1}| \ge (np_1)^{1/2+\eps}) = O(n^{-2}).
$$
Recalling that $1\leq p_1\frac{\qc}{2\qc-1}$, we get $\mathbb E\sthree_2(1)=(1-o(1))np_1\geq (2\qc-1)n = \Omega(\sqrt{n^3p^4}) \ge (\log n)^2$. Hence, with probability $1-O(n^{-2})$, we have
$\sthree_2(1)\ge \frac12(2\qc-1)n$, which proves the first part of the lemma.
On the other hand,
\begin{align*}
\Prob\Big(\ssthree{2}{1} - \ssthree{2}{\ell}\le \Big(1-\Big(\tfrac{1-\qc}{\qc}\Big)^{\ell-1}\Big)\tfrac{\ssthree{2}{1}}{2}\Big)
\le 
\Prob\Big(\ssthree{2}{1} - \ssthree{2}{\ell}&\le \Big(1-\Big(\tfrac{1-\qc}{\qc}\Big)^{\ell-1}\Big) \tfrac{2np_1}{3}\Big)\\
&+ \Prob\Big(\ssthree{2}{1}\ge \tfrac{4np_1}{3}\Big).
\end{align*}
Since $\mathbb E\sthree_2(1)=(1-o(1))np_1$, by Chernoff's bound, the 
second term on the right-hand side above is $O(n^{-2})$, while the first is bounded from above by
\[ 
\Prob\Big(\ssthree{2}{1} \le np_1 - \Big(1-\Big(\tfrac{1-\qc}{\qc}\Big)^{\ell-1}\Big) \frac{np_1}{6}\Big)
+\; 
\Prob\Big(\ssthree{2}{\ell}\ge \Big(\tfrac{1-\qc}{\qc}\Big)^{\ell-1} np_1 + \Big(1-\Big(\tfrac{1-\qc}{\qc}\Big)^{\ell-1}\Big) \frac{np_1}{6}\Big).
\]
Using Chernoff's bound again (and recalling that 
$\mathbb E\sthree_2(\ell)\le np_\ell\leq (\frac{1-\qc}{\qc})^{\ell-1}np_1$), both probabilities above can be bounded from above by
\[
\exp\Big(-\Big(1-\Big(\tfrac{1-\qc}{\qc}\Big)^{\ell-1}\Big)^2 \frac{np_1}{100}\Big) 
\leq 
\exp\Big(-\Big(\tfrac{2\qc-1}{\qc}\Big)^2 \frac{np_1}{100}\Big) 
\leq 
\exp\Big(-\frac{(2\qc-1)^3n}{100}\Big)
=O(n^{-2}),
\]
where in the last inequality we used that 
$\frac{2\qc-1}{\qc}\leq p_1$, and for the equality we used that $(2\qc-1)^3=\Omega((np^4)^{3/2})=\Omega(n^{-1/2})$.
Summarizing, uniformly in $\ell\in [2,2k]$,
\[
\Prob\Big(\ssthree{2}{1} - \ssthree{2}{\ell}\le \Big(1-\Big(\tfrac{1-\qc}{\qc}\Big)^{\ell-1}\Big)\frac{\ssthree{2}{1}}{2}\Big) = O(n^{-2}).
\]
The lemma follows by a union bound over all $\ell\in [2,2k]$.
\end{proof}

\begin{remark}\label{rem claim 3}
For $np = c n^{2/3}$ for some constant $c > 0$, Lemma~\ref{claim 3} holds by replacing the original statement with ``Given $\eps\in (0,1)$ and $L = L(\eps, c)$ (provided by Remark~\ref{rem E:IR}), conditionally on $\cE_L$, the event obtained by intersecting $\{\max_{i\in [L]} \ssthree{2}{i}\geq \tfrac12(2\qc-1)n\}$ and
\begin{align*}
\Big\{\text{for every }\ell\in [L+1,2k], \max_{i\in [L]} \ssthree{2}{i} - \ssthree{2}{\ell}\geq \tfrac12\big(1-\big(\tfrac{1-\qc}{\qc}\big)^{\ell-1}\big)\max_{i\in [L]} \ssthree{2}{i}\Big\}
\end{align*}
holds a.a.s.''. The necessary modifications are as follows. First, at the beginning of the proof, we replace $\cE$ by $\cE_L$, and the applications of Lemma~\ref{lem difference binomials} become applications of Remark~\ref{rem binomials''} instead. Define $p_* = \max_{i\in [L]} p_i$. Then,~\eqref{eq p1pell} and the consequent analysis holds for all $\ell\in [L+1,2k]$ and $p_*$ instead of $p_1$. Moreover,~\eqref{eq:sum p} must be replaced by 
\[p_* \Big(L+\frac{\qc}{2\qc-1}\Big)\ge 1.\]
\end{remark}

\begin{remark}\label{rem claim 3'}
Fix $np = c n^{2/3}$ for some constant $c > 0$, and define 
\begin{equation}\label{eq:lhat}
\widehat \ell_1 = \min\Big\{\ell\in [L]: \ssthree{2}{\ell} = \max_{i\in [L]}\ssthree{2}{i}\Big\}.    
\end{equation}
Then, replacing Lemma~\ref{claim E} by Lemma~\ref{lem:separate}, and Lemma~\ref{lem difference binomials} by Remark~\ref{rem binomials'} in the proof of Lemma~\ref{claim 3} implies that conditionally on the event of Lemma~\ref{lem:separate}, a.a.s.\ for every $\ell\in [L]\setminus \{\widehat \ell_1\}$,
\begin{equation}\label{eq:constant}
\ssthree{2}{\widehat \ell_1} - \ssthree{2}{\ell} = \Omega((2\qc-1)\ssthree{2}{\widehat \ell_1}).    
\end{equation}
Except replacing $p_1$ by $p_*$ (as defined in Remark~\ref{rem claim 3}), no additional modifications are needed.
\end{remark}

Our next goal will be to bound from above the number of vertices in Level~2 which carry the most frequent label in this level after the second round. The following observation will be a technical tool in the proof of this bound. 

\begin{observation}\label{ob 3.5}
Suppose that $\Omega(n^{5/8}) = np \le \sqrt{15} n^{3/4} (\log n)^{1/4}$. Then, every vertex in Level~$1$ and Level~$2$ is connected to at most $10$ vertices in Level~$1$ a.a.s.
\end{observation}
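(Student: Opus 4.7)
The plan is to bound the event ``some vertex in $A\cup B$ has at least $11$ neighbors in $A$'' via a straightforward first moment argument. Recall $|A|=2k$ with $k=\lceil 15 p^{-2}(n^{-1}\log n)^{1/2}\rceil$, so $2kp = O(p^{-1}\sqrt{\log n/n})$. Under the hypothesis $np\ge \Omega(n^{5/8})$ we have $p\ge \Omega(n^{-3/8})$, and hence
\[
2kp = O\bigl(n^{3/8}\sqrt{\log n/n}\bigr) = O\bigl(n^{-1/8}\sqrt{\log n}\bigr) = o(1),
\]
which is the key quantitative input.

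Now fix any vertex $v\in V$. The number of neighbors of $v$ in $A$ is stochastically dominated by $\mathrm{Bin}(2k,p)$, so
\[
\Prob\bigl(v \text{ has at least } 11 \text{ neighbors in } A\bigr) \le \binom{2k}{11}p^{11} \le \frac{(2kp)^{11}}{11!}.
\]
Letting $X$ denote the number of vertices of $V$ with at least $11$ neighbors in $A$, the first moment is
\[
\E X \le n\cdot\frac{(2kp)^{11}}{11!} = O\!\left(n\cdot p^{-11}\left(\frac{\log n}{n}\right)^{11/2}\right) = O\!\left(p^{-11} n^{-9/2}(\log n)^{11/2}\right).
\]
Using $p^{-11}\le n^{33/8}$, this simplifies to $O\!\left(n^{33/8-36/8}(\log n)^{11/2}\right) = O\!\left(n^{-3/8}(\log n)^{11/2}\right) = o(1)$. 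By Markov's inequality, $\Prob(X\ge 1)\le \E X=o(1)$, so a.a.s.\ no vertex has more than $10$ neighbors in $A$; in particular, this applies to all vertices in $A\cup B$.

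There is no real obstacle beyond verifying the arithmetic of the exponents: the bottleneck comes from the regime $np=\Theta(n^{5/8})$, where $p^{-11}$ is largest, and the exponent $11$ is chosen precisely large enough to make the first moment tend to $0$ after multiplying by the $n$ from the union bound. The constant $10$ in the statement is not tight: any constant $c$ with $c>(9/2)/(3/8) = 12$... actually any $c\ge 10$ gives $c+1\ge 11$ and makes the computation go through under the threshold $np\ge\Omega(n^{5/8})$, which is why $10$ suffices for the whole range of $p$ considered.
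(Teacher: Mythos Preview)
Your proof is correct and follows essentially the same approach as the paper: stochastically dominate the number of neighbors in $A$ by $\mathrm{Bin}(2k,p)$, observe that $kp=n^{-1/8+o(1)}$ in the worst case $np=\Theta(n^{5/8})$, and take a union bound over all vertices. The closing discussion about the optimal constant is a bit muddled (in fact $c+1>8$ already suffices, not $12$), but this does not affect the validity of the argument.
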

\begin{proof}
For any $j\in [n]$, the number of neighbors of $v_j$ in Level 1 is stochastically dominated by $\mathrm{Bin}(2k,p)$. Since $kp=p^{-1}n^{-1/2+o(1)} \le n^{-1/9}$, by a union bound over all vertices we conclude that 
\begin{equation*}
\Prob(\exists j\in [n], |N(v_j)\cap\vone|\ge 10)\le n \binom{2k}{10} p^{10} = O(n (kp)^{10}) = o(1),
\end{equation*}
as desired.
\end{proof}

The next result is an analogue of Lemma~\ref{claim 3} but concerning vertices at Level 2. However, unlike in Lemma~\ref{claim 3} where $\ssthree{2}{1}$ was approximated by a binomial random variable, the lower bound on $\stwo_{2}(1) - \stwo_{2}(\ell)$ in Lemma~\ref{claim 4} is given in terms of $\mathbb E\stwo_{2}(1)$ and not of $\stwo_{2}(1)$ itself due to the lack of an appropriate upper bound on $\stwo_{2}(1)$.

\begin{lemma}\label{claim 4}
Suppose that $n^{2/3} \ll np \le \sqrt{15} n^{3/4} (\log n)^{1/4}$. Then, a.a.s.\ for every $\ell\in [2,2k]$, the difference between $\stwo_{2}(\ell)$ and $\stwo_{2}(1)$ is bounded from above by $44np$.
\end{lemma}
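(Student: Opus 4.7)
The plan is to apply Chebyshev's inequality to $D_\ell := \stwo_2(\ell) - \stwo_2(1) = \sum_{u \in B} Y_u$, where $Y_u := \mathds{1}[u \in B_2(\ell)] - \mathds{1}[u \in B_2(1)]$, after a two-step conditioning. First, I will condition on the intersection of $\cE$, $\cF$ and the conclusion of Observation~\ref{ob 3.5} (all a.a.s.\ events). Then, I will further condition on every edge of $\cG(n,p)$ with at least one endpoint in $A$; this fixes the basins $(B_1(i))_{i=1}^{2k}$, the labels in $A$ after round~$1$, and the $A$-contributions $\alpha_i(u) := |N(u) \cap A_1(i)| \le 10$ for every $u \in B$. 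Under this conditioning, the only remaining randomness is the independent family of $\mathrm{Ber}(p)$ edges inside $B$.

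For the expected value, fix $u \in B$ and write $f_j(u) = X_j(u) + \alpha_j(u) + \mathds{1}[u \in B_1(j)]$ for $j \in [2k]$, where $X_j(u) := |N(u) \cap B_1(j)|$. The families $(X_j(u))_{j \in [2k]}$ are independent binomials with parameters $(\szBsn{j}-\mathds{1}[u \in B_1(j)], p)$. For $u \notin B_1(\ell)$, Lemma~\ref{lem:bis} applied to $X_1(u)$ and $X_\ell(u)$ (with $M$ equal to the maximum count over the remaining labels after absorbing the $\alpha_i$'s and $\rho$ its multiplicity), combined with the fact that the mean gap $(\szBsn{1} - \szBsn{\ell})p \ge (\ell-1)np^3/1.4$ is $\gg 1$ and thus dominates the $O(1)$ shifts coming from $\alpha_i(u)$ and the indicator $\mathds{1}[u \in B_1(i)]$, yields $\E[Y_u] \le 0$. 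For $u \in B_1(\ell)$ the trivial bound $\E[Y_u] \le 1$ suffices. Summing over $B$,
\[
\E[D_\ell] \le \szBsn{\ell} \le (1+o(1))np.
\]

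For the variance, I will use an edge-decoupling identity. For any pair $u, v \in B$, after the above conditioning, $Y_u$ and $Y_v$ share no random edge except $uv$ itself; conditioning on the status of $uv$ therefore makes $Y_u$ and $Y_v$ independent. A short calculation yields
\[
\mathrm{Cov}(Y_u, Y_v) = p(1-p)(a_u - b_u)(a_v - b_v),
\]
with $a_w := \E[Y_w \mid uv \in E]$ and $b_w := \E[Y_w \mid uv \notin E]$. Flipping the edge $uv$ shifts by $1$ the count of a single label in $N[u]$, and this can alter $Y_u$ only when the top two label counts at $u$ differ by at most $1$; by anticoncentration for the difference of two independent binomials with total variance $\Theta(np^2)$ (e.g., via Berry-Esseen in the spirit of Lemma~\ref{lem difference binomials}), this event has probability $O(1/\sqrt{np^2})$, so $|a_u - b_u| = O(1/\sqrt{np^2})$ uniformly in $u$. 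Consequently, using $|B| = \Theta(knp)$ from $\cF$ and $k = O(p^{-2}\sqrt{\log n/n})$,
\[
\mathrm{Var}(D_\ell) \le |B| + O\Big(\frac{|B|^2 p}{np^2}\Big) = O(knp + k^2 np) = O\Big(\frac{\log n}{p^3}\Big).
\]
Chebyshev's inequality then gives $\Prob(D_\ell > 44np) = O(\log n/(n^2p^5))$, and a union bound over $\ell \in [2,2k]$ contributes an extra factor of $2k$, yielding $O((\log n)^{3/2}/(n^{5/2}p^7)) = o(1)$ whenever $np \gg n^{2/3}$ (since then $p^7 \gg n^{-7/3}$, so $n^{5/2}p^7 \gg n^{1/6}$, which dominates any power of $\log n$).

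The primary technical hurdle is the uniform anticoncentration estimate $|a_u - b_u| = O(1/\sqrt{np^2})$, which requires a local CLT-type control on the gap between the top two label counts at $u$. A secondary subtlety is the bookkeeping around the $O(1)$ shifts $\alpha_i(u)$ and $\mathds{1}[u \in B_1(i)]$ in the expected-value step; this is handled by observing that the basin-size gap $(\szBsn{1} - \szBsn{\ell})p \gg 1$ guaranteed by $\cE$ swamps any such $O(1)$ correction.
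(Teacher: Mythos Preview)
Your approach is essentially the paper's: both use the second-moment method on $\stwo_2(\ell)-\stwo_2(1)$, and both exploit the key edge-decoupling identity $\mathrm{Cov}(Y_u,Y_v)=pq(a_u-b_u)(a_v-b_v)$. Two points deserve attention.

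First, the expected-value step has a genuine (if small) gap. Lemma~\ref{lem:bis} is stated for two \emph{unshifted} binomials; once you add the $O(1)$ correction $c_\ell=\alpha_\ell(u)+\mathds{1}[u\in B_1(\ell)]$ to $X_\ell(u)$, the monotonicity conclusion $p_X(M,\rho)\ge p_Y(M,\rho)$ no longer follows from the lemma as stated---if $c_\ell>c_1$, the shift works against label~$1$ and the ``mean gap $\gg O(1)$'' heuristic does not convert into a proof. The paper handles exactly this issue by restricting to \emph{$\ell$-good} vertices (those $u\notin B_1(1)\cup B_1(\ell)$ that carry no bias toward labels $1,\ell$ coming from~$A$); for such $u$ the comparison is between genuine binomials and Lemma~\ref{lem:bis} applies cleanly. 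The at-most-$22$ non-good basins, each of size $\le 2np$, supply the $44np$ slack in the lemma's statement. Your fix would be the same: isolate the $O(np)$ vertices carrying a nonzero shift for label~$1$ or~$\ell$ and bound those trivially.

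Second, your anticoncentration justification is slightly off. ``The top two label counts differ by at most~$1$'' is indeed necessary for $Y_u$ to change, but with $2k$ competing binomials this event need not have probability $O(1/\sqrt{np^2})$ (a naive union over pairs only yields $O(k/\sqrt{np^2})$, which is useless near $np=n^{2/3}$). The correct tighter condition is that $X_1(u)$ or $X_\ell(u)$ lies within $O(1)$ of the maximum of the remaining counts; since $X_1(u)$ is a single binomial independent of that maximum, its point probabilities are $O(1/\sqrt{np^2})$, giving your stated bound on $|a_u-b_u|$. The paper instead proves the relative estimate $|a_u-b_u|=o(\E[Y_u])$ via Lemma~\ref{lem difference binomials}; your absolute bound is arguably cleaner and, once justified correctly, leads to the same conclusion after Chebyshev and the union bound.
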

\begin{proof}
Fix $t_n = np^2/\sqrt{2}$. As in the proof of Lemma~\ref{claim 3}, we condition on the events $\cE$ (from Lemma~\ref{claim E}), $\cF$ (from Lemma~\ref{claim F}), and the statement of Observation~\ref{ob 3.5}, which all hold a.a.s. Moreover, if $np\in [n^{3/4} (\log n)^{-1/2}, \sqrt{15} n^{3/4} (\log n)^{1/4}]$, we also condition on~$\cE'$ (see Lemma~\ref{claim E'}).

\paragraph{Step 1.} Fix a vertex $u\in B$, its associated (random) set $U$ (as defined in the second round of ALAP) and, for every $\ell\in [2k]$, define $\widehat p_\ell = \widehat p_\ell(u)$ as the probability that the uniformly chosen label from $U$ given to $u$ at the second round of ALAP is $\ell$. In particular, the sum of $(\widehat p_\ell)_{\ell=1}^{2k}$ is 1. 
Despite the fact that $(\widehat p_\ell)_{\ell=1}^{2k}$ depend on the choice of a vertex in Level 2 (due to the label of the vertex itself as well as its edges towards Level~1), we will show that, for every $\ell\in [2,2k]$ and almost all vertices $u\in B$,
$\widehat p_1(u) - \widehat p_{\ell+1}(u)$ is uniformly bounded from below. 

Recall that, at the end of round 1, only the edges (and non-edges) with two endpoints in $A$ and the edges (and non-edges) $\{v_i\}_{i=1}^{\ell}\times B_1(\ell)$,
$\ell\in [2k]$ have been exposed. 
Fix $\ell\in [2,2k]$ and say a vertex $u\in B$ is \emph{$\ell$-good} if $u\in B_1(i)$ for some $i\notin \{1,\ell\}$ such that~$v_i$ is not incident to any of $v_1$ and $v_{\ell}$.
In particular, Observation~\ref{ob 3.5} implies that, for every $\ell\in [2,2k]$, the vertices in all but at most 22 basins in $B$ are $\ell$-good.
Moreover, for every $\ell$-good vertex $u$, conditionally on the event $u\in B_2(\{1,\ell\})\cap B_1(i)$ for some~$i\in [2k]$, the choice of a label depends only on currently unexposed edges towards $B_1(1)\cup (N(v_1)\cap \{v_j\}_{j=i+1}^{2k})$ and $B_1(\ell)\cup (N(v_\ell)\cap \{v_j\}_{j=i+1}^{2k}\setminus N(v_1))$.
We denote by $\widehat B_1(1)$ and $\widehat B_1(\ell)$ the latter sets, by~$\widehat{\mathfrak B}_1(1)$ and $\widehat{\mathfrak B}_1(\ell)$ their respective sizes, and by $\widehat p_1'$ and $\widehat p_\ell'$ the probability that $u$ obtains label 1 and label $\ell$, respectively.

Recall that, by our conditioning on the event $\cE$, $\mathfrak B_1(1)-\mathfrak B_1(\ell)\ge (\ell-1)t_n$ and, by Observation~\ref{ob 3.5}, $\max\{\mathfrak B_1(1)-\widehat{\mathfrak B}_1(1), \mathfrak B_1(\ell)-\widehat{\mathfrak B}_1(\ell)\}\le 10$.
Thus, for $\ell\in [2,2k]$ and an~$\ell$-good vertex $u$, Lemma~\ref{lem:bis} applies with $n=\widehat{\mathfrak B}_1(1)$, $n'=\widehat{\mathfrak B}_1(\ell)$, $M$ equal to the maximum number of vertices  in $A\cup B$ with label $i\in [2k]\setminus \{1,\ell\}$ and at distance at most~1 from $u$, and $\rho$ equal to the number of times this maximum is attained.
As a result,
\begin{equation*}
\frac{\widehat p_1'}{\widehat p_{\ell}'} 
\ge \frac{\Prob(\mathrm{Bin}(\widehat{\mathfrak B}_1(1), p) > \mathrm{Bin}(\widehat{\mathfrak B}_1(\ell), p)+\tfrac{1}{2}\Prob(\mathrm{Bin}(\widehat{\mathfrak B}_1(1), p)= \mathrm{Bin}(\widehat{\mathfrak B}_1(\ell), p))}{\Prob(\mathrm{Bin}(\widehat{\mathfrak B}_1(\ell), p) > \mathrm{Bin}(\widehat{\mathfrak B}_1(1), p)+\tfrac{1}{2}\Prob(\mathrm{Bin}(\widehat{\mathfrak B}_1(1), p)= \mathrm{Bin}(\widehat{\mathfrak B}_1(\ell), p))}.    
\end{equation*}
By transforming this inequality further, we obtain that
\begin{equation*}
\begin{split}
\frac{\widehat p_1'}{\widehat p_1' + \widehat p_{\ell}'} 
&\ge \Prob(\mathrm{Bin}(\widehat{\mathfrak B}_1(1), p) > \mathrm{Bin}(\widehat{\mathfrak B}_1(\ell), p)) + \frac{1}{2}\Prob(\mathrm{Bin}(\widehat{\mathfrak B}_1(1), p) = \mathrm{Bin}(\widehat{\mathfrak B}_1(\ell), p))\\
&\ge \Prob(\mathrm{Bin}(\widehat{\mathfrak B}_1(1), p)\ge \mathrm{Bin}(\widehat{\mathfrak B}_1(\ell), p)+1),
\end{split}
\end{equation*}
or equivalently

\[
\widehat{p}_{\ell}' \leq \frac{1-\mathbb{P}(\mathrm{Bin}(\widehat{\mathfrak B}_1(1),p)-\mathrm{Bin}(\widehat{\mathfrak B}_1(\ell),p)\geq 1)}{\mathbb{P}(\mathrm{Bin}(\widehat{\mathfrak B}_1(1),p)-\mathrm{Bin}(\widehat{\mathfrak B}_1(\ell),p)\geq 1)}\widehat{p}_1'.
\]%
Now, note that Lemma~\ref{lem difference binomials} may be applied for $a_1 = \widehat{\mathfrak B}_1(1)$, $a_2 = \widehat{\mathfrak B}_1(\ell)$, $M =1$ and~$p$; indeed, under the event $\cE$ and the statement of Observation~\ref{ob 3.5}, we have that $1\ll a_2\le a_1$ and $\min\{a_2, a_1-a_2\}p \ge t_np = np^3/\sqrt{2}\gg 1$. 
As in the proof of Lemma~\ref{claim 3}, this yields 
\begin{equation}\label{eq relation hat p}
\forall \ell\in [2,2k],\, \widehat p_{\ell}'\le \Big(\frac{1-\qc}{\qc}\Big)^{\ell-1}\widehat p_1' \text{ and, in particular, } \widehat p_1'\ge \frac{2\qc-1}{\qc}.
\end{equation}
It immediately follows, using Lemma~\ref{claim F} and Chernoff's bound, that 
a.a.s.~$\mathfrak B_2(1)\geq \frac{1}{2}\frac{2\Lambda-1}{\Lambda}\cdot\frac{4knp}{3}\geq (2\Lambda-1)\frac{knp}{3}$.

Since Chernoff's inequality and a union bound imply that a.a.s.\ every basin has size at most $2np$ and, as a result, for every $\ell\in [2,2k]$, there are at most $44np$ non-$\ell$-good vertices in $B$.
To finish the proof, it suffices to show that a.a.s., for every $\ell\in [2,2k]$, the number of $\ell$-good vertices ending up with label $1$ after two rounds of ALAP is 
larger than the number of $\ell$-good vertices ending up with label $\ell$.

\paragraph{Step 2.}
We fix $\ell\in [2,2k]$, denote by $B'$ the set of $\ell$-good vertices in $B$, and denote by $\mathfrak B_2'(1)$ and $\mathfrak B_2'(\ell)$ the number of $\ell$-good vertices obtaining label 1 and $\ell$ after round~2, respectively.
We concentrate on bounding from above the variance of $\stwo_{2}'(1)-\stwo_{2}'(\ell)=\sum_{v\in \vtwo'}(\indic{\vtwo_{2}(1)}{v}-\indic{\vtwo_{2}(\ell)}{v})$.
Note that,
\begin{equation}
\label{eqn:2ndMoment}
\begin{split}
\mathbb E[(\stwo_{2}'(1)-\stwo_{2}'(\ell))^2] 
&= \sum_{u\in \vtwo'}\mathbb E[(\indic{\vtwo_{2}(1)}{u}-\indic{\vtwo_{2}(\ell)}{u})^2]\\
&+\sum_{u,v\in \vtwo': u\neq v}\mathbb E[(\indic{\vtwo_{2}(1)}{u}-\indic{\vtwo_{2}(\ell)}{u})(\indic{\vtwo_{2}(1)}{v}-\indic{\vtwo_{2}(\ell)}{v})]
\end{split}
\end{equation}
and
\begin{equation}
\label{eqn:1stMoment}
(\mathbb E[\stwo_{2}'(1)-\stwo_{2}'(\ell)])^2 = \sum_{u,v\in \vtwo'}
\mathbb E[\indic{\vtwo_{2}(1)}{u}-\indic{\vtwo_{2}(\ell)}{u}]
\mathbb E[\indic{\vtwo_{2}(1)}{v}-\indic{\vtwo_{2}(\ell)}{v}].
\end{equation}
To bound the first summation in~\eqref{eqn:2ndMoment}, observe that, since $B_2(1)$ and $B_2(\ell)$ are disjoint,
$(\indic{\vtwo_{2}(1)}{u}-\indic{\vtwo_{2}(\ell)}{u})^2 = \indic{\vtwo_{2}(1)}{u}+\indic{\vtwo_{2}(\ell)}{u}$, so
\begin{equation}\label{eqn:1stSumBnd}
\begin{split}
\sum_{u\in \vtwo'}\mathbb E[(\indic{\vtwo_{2}(1)}{u}-\indic{\vtwo_{2}(\ell)}{u})^2]
&= \sum_{u\in \vtwo'}\mathbb E[\indic{\vtwo_{2}(1)}{u}+\indic{\vtwo_{2}(\ell)}{u})]\\
&= \mathbb E[\stwo_{2}'(1)+\stwo_{2}'(\ell)] \leq 2\mathbb E[\stwo_{2}'(1)].
\end{split}
\end{equation}
Next, recall that $\widehat p_{\ell}'\leq (\frac{1-\qc}{\qc})^{\ell-1}\widehat p_1'$ for all $\ell\in [2,2k]$. Thus, by definition of $\stwo_{2}'(\ell)$, for $\ell\in [2,2k]$
\[
\mathbb E [\stwo_{2}'(1)-\stwo_{2}'(\ell)] = \Omega((\widehat p_1'-\widehat p_\ell')knp)
= \Omega\Big(\widehat p_1'\Big(1-\Big(\tfrac{1-\qc}{\qc}\Big)^{\ell-1}\Big)knp\Big).
\]
Since $\frac{1-\qc}{\qc}=1-\frac{2\qc-1}{\qc}<1$, the right-hand side expression above is minimized at $\ell=2$ and 
\[
\mathbb E [\stwo_{2}'(1)-\stwo_{2}'(\ell)] 
= \Omega\Big(\widehat p_1'\Big(\tfrac{2\qc-1}{\qc}\Big)knp\Big)
= \Omega\Big(\widehat p_1' (2\qc-1)knp\Big).
\]
In particular, since $\mathbb E[\stwo_{2}'(1)]=\Theta(\widehat p'_1 knp)$
and $\widehat p'_1\geq\frac{2\qc-1}{\qc}\geq 2\qc-1$, we get
\[
p\big(\mathbb E[\stwo_{2}'(1)-\stwo_{2}'(\ell)]\big)^2 = \Omega\big((\widehat p_1')^2 (2\qc-1)^2p(knp)^2\big)
= \Omega\big((2\qc-1)^3knp^2\,\mathbb E[\stwo_{2}'(1)]\big)
\gg \mathbb E[\stwo_{2}'(1)],
\]
where the last inequality comes from the fact that when $2\qc-1=\Omega(1)$, then $(2\qc-1)^3knp^2=\Omega(\sqrt{n\log n})\gg 1$, and when $2\qc-1=\Omega(\sqrt{np^4})$, then
$(2\qc-1)^3knp^2=\Omega((np^4)^{3/2}\sqrt{n\log n})\gg (np^3)^2\gg 1$.
By~\eqref{eqn:1stSumBnd}, we conclude that
\begin{equation*}
\sum_{u\in \vtwo'}\mathbb E[(\indic{\vtwo_{2}(1)}{u}-\indic{\vtwo_{2}(\ell)}{u})^2]
= o(p(\mathbb E[\stwo_{2}'(1)-\stwo_{2}'(\ell)])^2).
\end{equation*}
This, together with~\eqref{eqn:2ndMoment} and~\eqref{eqn:1stMoment} yields
\begin{equation}\label{eq variance}
\begin{split}
\mathbb V[\stwo_{2}'(1)-\stwo_{2}'(\ell)] 
&=\, o\big(p(\mathbb E[\stwo_{2}'(1)+\stwo_{2}'(\ell)])^2\big) \\
& + \sum_{u,v\in \vtwo': u\neq v}
\Big(\mathbb E[(\indic{\vtwo_{2}(1)}{u}-\indic{\vtwo_{2}(\ell)}{u})
(\indic{\vtwo_{2}(1)}{v}-\indic{\vtwo_{2}(\ell)}{v})]\\
&\hspace{7em}-\;
\mathbb E[\indic{\vtwo_{2}(1)}{u}-\indic{\vtwo_{2}(\ell)}{u}]\mathbb E[\indic{\vtwo_{2}(1)}{v}-\indic{\vtwo_{2}(\ell)}{v}]\Big).
\end{split}
\end{equation}
To bound the summation above, note that by conditioning 
on whether the edge $uv$ is in~$G_n$, we get
\begin{align}
& \mathbb E[(\indic{\vtwo_{2}(1)}{u}-\indic{\vtwo_{2}(\ell)}{u})(\indic{\vtwo_{2}(1)}{v}-\indic{\vtwo_{2}(\ell)}{v})] \nonumber \\
& = \onemp \mathbb E[(\indic{\vtwo_{2}(1)}{u}-\indic{\vtwo_{2}(\ell)}{u})(\indic{\vtwo_{2}(1)}{v}-\indic{\vtwo_{2}(\ell)}{v})\mid uv\notin G_n] 
\label{eq snd moment}
\\
& 
+ p \mathbb E[(\indic{\vtwo_{2}(1)}{u}-\indic{\vtwo_{2}(\ell)}{u})(\indic{\vtwo_{2}(1)}{v}-\indic{\vtwo_{2}(\ell)}{v})\mid uv\in G_n]. \nonumber
\end{align}
The random variables $\indic{\vtwo_{2}(1)}{u}-\indic{\vtwo_{2}(\ell)}{u}$ and $\indic{\vtwo_{2}(1)}{v}-\indic{\vtwo_{2}(\ell)}{v}$ are independent conditionally on the event $uv\notin G_n$, 
and also on the event $uv\in G_n$; indeed, in both cases, the two variables are measurable in terms of disjoint sets of edges of $G_n$. 
Hence, 
\begin{align}
\label{eq 2nd moment 1}
& \mathbb E[(\indic{\vtwo_{2}(1)}{u}-\indic{\vtwo_{2}(\ell)}{u})(\indic{\vtwo_{2}(1)}{v}-\indic{\vtwo_{2}(\ell)}{v})\mid uv\notin G_n] \nonumber \\
& = \mathbb E[\indic{\vtwo_{2}(1)}{u}-\indic{\vtwo_{2}(\ell)}{u}\mid uv\notin G_n]\mathbb E[\indic{\vtwo_{2}(1)}{v}-\indic{\vtwo_{2}(\ell)}{v}\mid uv\notin G_n],
\end{align}
and
\begin{align}
\label{eq 2nd moment 2}
& \mathbb E[(\indic{\vtwo_{2}(1)}{u} - \indic{\vtwo_{2}(\ell)}{u})(\indic{\vtwo_{2}(1)}{v} - \indic{\vtwo_{2}(\ell)}{v})\mid uv\in G_n] \nonumber \\
& = \mathbb E[\indic{\vtwo_{2}(1)}{u} - \indic{\vtwo_{2}(\ell)}{u}\mid uv\in G_n]\mathbb E[\indic{\vtwo_{2}(1)}{v} - \indic{\vtwo_{2}(\ell)}{v}\mid uv\in G_n].
\end{align}
Moreover, if $w\in\{u,v\}$, then
\begin{equation}\label{eq conditional expect}
\begin{split}
& \mathbb E[\indic{\vtwo_{2}(1)}{w}-\indic{\vtwo_{2}(\ell)}{w}]
\\
& = p\mathbb E[\indic{\vtwo_{2}(1)}{w}-\indic{\vtwo_{2}(\ell)}{w} \mid uv\in G_n]
+
\onemp\mathbb E[\indic{\vtwo_{2}(1)}{w}-\indic{\vtwo_{2}(\ell)}{w}\mid uv\not\in G_n].
\end{split}
\end{equation}
Now, consider the general term of summation in~\eqref{eq variance}.
Replacing the conditional expectations in~\eqref{eq snd moment} by their equivalent in~\eqref{eq 2nd moment 1}-\eqref{eq 2nd moment 2}, using~\eqref{eq conditional expect} twice, and some arithmetic, the general term can be rewritten as
\begin{equation}\label{eq rewrite}
p\onemp\cdot\prod_{w\in\{u,v\}} 
\big(\mathbb E[\indic{\vtwo_{2}(1)}{w}-\indic{\vtwo_{2}(\ell)}{w}\mid uv\notin G_n] - \mathbb E[\indic{\vtwo_{2}(1)}{w}-\indic{\vtwo_{2}(\ell)}{w}\mid uv\in G_n]\big).
\end{equation}
We now claim that for $w\in\{u,v\}$, uniformly in $u$, $v$ and $\ell$,
\begin{equation}\label{eq w}
\begin{split}
|\mathbb E[\indic{\vtwo_{2}(1)}{w}-\indic{\vtwo_{2}(\ell)}{w}\mid uv\notin G_n] 
&-\, \mathbb E[\indic{\vtwo_{2}(1)}{w}-\indic{\vtwo_{2}(\ell)}{w}\mid uv\in G_n]|\\
&=\, 
o(\mathbb E[\indic{\vtwo_{2}(1)}{w}-\indic{\vtwo_{2}(\ell)}{w}]),
\end{split}
\end{equation}
so the general term in the summation in~\eqref{eq variance} equals 
\[o(p\mathbb E[\indic{\vtwo_{2}(1)}{u}-\indic{\vtwo_{2}(\ell)}{u}]\mathbb E[\indic{\vtwo_{2}(1)}{v}-\indic{\vtwo_{2}(\ell)}{v}]),\] 
and thus 
$\mathbb V(\stwo_{2}'(1) - \stwo_{2}'(\ell)) = o(p(\mathbb E[\stwo_{2}'(1)-\stwo_{2}'(\ell)])^2)$.

Now, we show~\eqref{eq w}. Suppose $w=u$, as the other case is similar. Let $X_1 = |N[u]\cap \widehat B_1(1)|$ and $X_\ell = |N[u]\cap \widehat B_1(\ell)|$.
Observe that, thanks to Observation~\ref{ob 3.5}, in each of the unconditional probability space, the space conditioned on~$uv\in G_n$ and the space conditioned on~$uv\notin G_n$, there are two independent random variables $\widehat X_1\in \mathrm{Bin}(\szBsn{1}, p)$ and $\widehat X_\ell\in \mathrm{Bin}(\szBsn{\ell}, p)$ and a constant $m$ such that $|X_1 - \widehat X_1|\le m$ and $|X_\ell - \widehat X_\ell|\le m$.


In any case, since opening or closing the edge $uv$ leads to a difference of one edge, and by Lemma~\ref{lem difference binomials} we have that
\begin{equation}\label{eq:hats X}
\Prob(\widehat X_1 - \widehat X_\ell = m) = o(\Prob(\widehat X_1 - \widehat X_\ell\ge m) - \Prob(\widehat X_1 - \widehat X_\ell < m))
\end{equation}
for any fixed constant $m$ (showing that any constant number of edges does not change the probability of receiving a concrete label significantly),~\eqref{eq w} is satisfied for $w=u$. 

Since $\mathbb V(\stwo_{2}'(1) - \stwo_{2}'(\ell)) = o(p\mathbb (\mathbb E[\stwo_{2}'(1)-\stwo_{2}'(\ell)])^2)$, by Chebyshev's inequality
\begin{align*}
& \Prob\Big(\stwo_{2}'(1)-\stwo_{2}'(\ell)
\le \tfrac12\mathbb E[\stwo_{2}'(1)-\stwo_{2}'(\ell)]\Big) \\
& \le \Prob\Big(\big|\stwo_{2}'(1) - \stwo_{2}'(\ell) - \mathbb E[\stwo_{2}'(1)-\stwo_{2}'(\ell)]\big| 
\ge \tfrac12\mathbb E[\stwo_{2}'(1)-\stwo_{2}'(\ell)]\Big) \\
& \le  
\frac{4\mathbb V(\stwo_{2}'(1)-\stwo_{2}'(\ell))}{\big(\mathbb E[\stwo_{2}'(1)-\stwo_{2}'(\ell)]\big)^2} = o(p).
\end{align*}
A union bound leads to
\begin{equation*}
\Prob\big(\exists \ell\in [2,2k], \stwo_{2}'(1)-\stwo_{2}'(\ell)\le \tfrac12\mathbb E[\stwo_{2}'(1)-\stwo_{2}'(\ell)]\big)= o(kp) = o(1),
\end{equation*}
which proves the lemma since $\mathbb E[\stwo_{2}'(1)-\stwo_{2}'(\ell)] = \Omega\big(\big(1 - \big(\tfrac{1-\qc}{\qc}\big)^{\ell-1} \big)\mathbb E \stwo_{2}'(1)\big)$.
\end{proof}

\begin{remark}\label{rem claim 4}
For $np = c n^{2/3}$, the following strengthening of Lemma~\ref{claim 4} holds: Given $\eps\in (0,1)$ and suitably large $L = L(\eps, c)$ (recall Remark~\ref{rem E:IR}), conditionally on $\cE_L$, there is a constant $c_1 > 0$ such that a.a.s., 
for every $\ell\in [2k]$ different from the index of the largest basin, the difference between $\mathfrak B_{2}(\ell)$ and the size of the largest basin is at most $\eps np$.

Most of the necessary modifications are the same as in Remark~\ref{rem claim 3}. We emphasize that choosing $N$ in Remark~\ref{rem binomials''} sufficiently large allows us to replace~\eqref{eq:hats X} by
\[\Prob(\widehat X_1 - \widehat X_\ell = m) \le \eps|\Prob(\widehat X_1 - \widehat X_\ell\ge m) - \Prob(\widehat X_1 - \widehat X_\ell < m)|\]
for an appropriately small $\eps$, which in turn allows us to replace~\eqref{eq w} with
\[
\begin{split}
|\mathbb E[\indic{\vtwo_{2}(1)}{w}-\indic{\vtwo_{2}(\ell)}{w}\mid uv\notin G_n] 
&- \mathbb E[\indic{\vtwo_{2}(1)}{w}-\indic{\vtwo_{2}(\ell)}{w}\mid uv\in G_n]|\\
&\le 
\eps \mathbb E[\indic{\vtwo_{2}(1)}{w}-\indic{\vtwo_{2}(\ell)}{w}].
\end{split}\]
As a consequence, the last two displays in the proof of Lemma~\ref{claim 4} are equal to $O(p)$ and $O(kp)$ instead of $o(p)$ and $o(kp)$, which is sufficient for our purposes.

Most importantly, the current statement and proof of Lemma~\ref{claim 4} work with ``the worst-case scenario'' where all non-$\ell$-good vertices are simply assumed to obtain label $\ell$ after two rounds of ALAP. 
To show the analogous statement when $np = cn^{2/3}$, one can use the following fact (see also Section~\ref{sec 3.2}): in the said regime, for every sufficiently large $L = L(\eps,c)$, with probability at least $1-\eps$, the largest basin is among the first $L$ basins and, moreover, it (as well as each of the first $L$ basins) has size of order $np + \Theta(\sqrt{np})$.
Thus, for every $\ell\in [2k]$ and $u\in B$, any constant number of already revealed edges incident to $u$ and leading to a vertex with label $\ell$ is negligible compared to the standard deviation of the random variables $|N(u)\cap B_1(i)|$ for $i\in [L]$.
As a result, by choosing $L$ sufficiently large, for every $\ell\in [L]$, the probability that a non-$\ell$-good vertex obtains label~$\ell$ is at most $1/L+o(1)\le \eps$.
A second moment argument similar to the one from the proof of Lemma~\ref{claim 4} shows the statement of the remark.
\end{remark}

At this stage, the analysis of the label distribution after the second round is complete. The next lemma is a technical tool in our analysis of the third round.

\begin{lemma}\label{ob 4.5}
Suppose that $n^{2/3} \ll np \le \sqrt{15} n^{3/4} (\log n)^{1/4}$. Then, after the second round of ALAP, a.a.s.\ $n-o(n)$ of the vertices in Level $3$ have the following property: for every $\ell\in [2,2k]$, their number of neighbors in the second level with label $\ell$ is at most by $90np^2$ larger than their number of neighbors in the second level with label $1$.
\end{lemma}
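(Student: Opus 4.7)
The plan is to argue that for a typical vertex $u \in C$, the random variables $|N(u) \cap \vtwo_2(\ell)|$ are tightly concentrated around their means, and to combine this with the bound on basin-size differences after round 2 already proved in Lemma~\ref{claim 4}.

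The first step is an independence observation about ALAP. The partition $(\vtwo_2(\ell))_{\ell=1}^{2k}$ is determined entirely by edges inside $A \cup B$: round 1 of ALAP only involves edges incident to $A$, and the round-2 label of each $v \in B$ depends only on the already-defined basins $\basin(\cdot)$ together with $v$'s edges to $B$. Edges from a vertex $u \in C$ to $B$ are exposed separately, when computing $u$'s own round-2 label, and thus play no role in determining that partition. Consequently, conditionally on $(\vtwo_2(\ell))_{\ell=1}^{2k}$, the variables $X_\ell := |N(u) \cap \vtwo_2(\ell)|$ are independent binomials $\mathrm{Bin}(\stwo_2(\ell), p)$ across $\ell \in [2k]$, since the $\vtwo_2(\ell)$ are pairwise disjoint subsets of $\vtwo$.

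Next I would condition on the a.a.s.\ events from Lemma~\ref{claim 4} and Lemma~\ref{claim F}, giving $\stwo_2(\ell) - \stwo_2(1) \le 44np$ for every $\ell \in [2,2k]$ and $\stwo \le \tfrac{8}{3}knp$. In particular, $\mathbb E[X_\ell - X_1] = p(\stwo_2(\ell) - \stwo_2(1)) \le 44np^2$, so it suffices to bound the deviation $(X_\ell - X_1) - \mathbb E[X_\ell - X_1]$ by $46np^2$, which I split symmetrically into $X_\ell - \mathbb E X_\ell \le 23np^2$ and $\mathbb E X_1 - X_1 \le 23np^2$. Applying Lemma~\ref{lem chernoff} to each binomial with $t = 23np^2$ gives a failure probability at most $\exp\bigl(-\Omega((np^2)^2/(p\stwo_2(\ell)+np^2))\bigr)$. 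Since $p\stwo_2(\ell) \le p\stwo = O(knp^2)$, this simplifies to $\exp(-\Omega(np^2/k))$. Plugging in $k = \Theta\bigl(p^{-2}(\log n/n)^{1/2}\bigr)$ gives $np^2/k = \Omega\bigl(n^{3/2}p^4/\sqrt{\log n}\bigr)$, and the hypothesis $np \gg n^{2/3}$ forces $p \gg n^{-1/3}$ and hence $n^{3/2}p^4 \gg n^{1/6}$. So the failure probability per pair $(u,\ell)$ is super-polynomially small in $n$.

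A union bound over the $O(k)$ choices of $\ell \in [2,2k]$ keeps the probability that a fixed $u \in C$ fails the stated property at $o(1)$. Therefore the expected number of failing vertices in $C$ is $o(|C|) = o(n)$, and Markov's inequality gives that a.a.s.\ only $o(n)$ vertices of $C$ fail, which is what is required. The only delicate point of the argument is verifying the measurability/independence structure in ALAP described in the first paragraph; once that is in place, the rest is a routine Chernoff-and-first-moment estimate.
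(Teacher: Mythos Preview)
Your proof is correct and rests on the same core idea as the paper's: once you condition on the round-2 partition $(\vtwo_2(\ell))_{\ell=1}^{2k}$ (which is measurable with respect to edges inside $A\cup B$), the edges from a fixed $u\in C$ to $B$ are still i.i.d.\ $\mathrm{Ber}(p)$, so each $X_\ell$ is an honest $\mathrm{Bin}(\stwo_2(\ell),p)$ and Chernoff plus a union bound finishes.

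The one genuine difference is in the decomposition. The paper first strips off from each $B_2(\ell)$ a subset $B_2'(\ell)$ of size $\lceil 44np\rceil$, shows separately that $|N(u)\cap B_2'(\ell)|\le 88np^2$ for all $u,\ell$, and only then compares $Y_1=|N(u)\cap B_2(1)|$ with $Y_\ell=|N(u)\cap(B_2(\ell)\setminus B_2'(\ell))|$; the point of the trick is to force $\mathbb E Y_\ell\le \mathbb E Y_1$ so that the Chernoff bound for $Y_1+2np^2\le Y_\ell$ is clean. You instead absorb the $44np^2$ gap in means directly and bound each one-sided deviation $|X_\ell-\mathbb E X_\ell|$ by $23np^2$, using only the trivial bound $\stwo_2(\ell)\le \stwo\le \tfrac{8}{3}knp$ for the variance. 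Both routes land on the same exponent $\exp(-\Omega(np^2/k))=\exp(-\Omega(n^{3/2}p^4/\sqrt{\log n}))$, and yours is arguably the more direct of the two. Note also that this exponent is already enough to union-bound over all $u\in C$ as well as all $\ell$, so the final Markov step, while correct, is not strictly needed.
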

\begin{proof}
Condition on the event $\stwo_{2}(1)\le (\log n)^{1/3} \mathbb E \stwo_{2}(1)$ (which holds a.a.s.\ as well by Markov's inequality), the event $\cF$ from Lemma~\ref{claim F}, the variables $(\stwo_{2}(\ell))_{\ell=1}^{2k}$
and the a.a.s.\ statement of Lemma~\ref{claim 4}.
For every $\ell\in [2,2k]$, denote by $B_2'(\ell)$ an arbitrary subset of $B_2(\ell)$ of size $\lceil 44np\rceil$, if $\mathfrak B_2(\ell)\ge 44np$, and $B_2'(\ell) = B_2(\ell)$ otherwise. Then, Chernoff's inequality and a union bound show that a.a.s., for every $\ell\in [2,2k]$ and every vertex $u\in C$, there are at most~$88np^2$ neighbors of $u$ in $B_2'(\ell)$.

Now, fix a vertex $u\in \vthree$. Also, denote by $Y_1$ the number of neighbors of $u$ in $B_2(1)$ and, for every $\ell\in [2,2k]$, denote by $Y_\ell$ the number of neighbors of $u$ in $B_2(\ell)\setminus B_2'(\ell)$.
In particular, $Y_1\in \mathrm{Bin}(\stwo_{2}(1), p)$ and $Y_\ell\in \mathrm{Bin}(\max\{\stwo_{2}(\ell)-44np,0\}, p)$ for $\ell\in [2,2k]$. 
Then, Chernoff's bound implies
\begin{align*}
\Prob(Y_1+2np^2\le Y_\ell)
&\le \Prob\big(Y_1\le \tfrac12\mathbb E[Y_1+Y_\ell]-np^2\big) + \Prob\big(Y_\ell\ge \tfrac12\mathbb E[Y_1+Y_\ell]+np^2\big)\\
&\le \Prob\big(Y_1 - \mathbb E Y_1\le -np^2\big) + \Prob\big(Y_\ell - \mathbb E Y_\ell\ge np^2\big)\\
&\le 2\exp\Big(-\tfrac{n^2p^4}{2\mathbb E[Y_1]+2np^2}\Big) 
\\
& \le 2\exp\Big(-\tfrac{n^2p^4}{8np^2k}\Big),
\end{align*}
where for the last inequality follows from the fact that  the event $\cF$ holds, so $p\mathfrak{B}_2(1)\le 3np^2k = O(\sqrt{n\log n})$. However, $n^2p^4 = \Omega(n^{2/3})$, so the latter bound allows us to conclude by a union bound over $\ell\in [2,2k]$.
\end{proof}

\begin{remark}\label{rem 4.5}
When $np = c n^{2/3}$ for some constant $c > 0$, by replacing Lemma~\ref{claim 4} with Remark~\ref{rem claim 4}, we may deduce that, conditionally on the event $\cE_L\cap \cG_{L,\eps}$ (see Lemmas~\ref{rem E:IR} and~\ref{lem:separate}), a.a.s.\ almost all vertices in Level 3 satisfy the following property: for
every $\ell\in [2,2k]$, their number of neighbors in the second level with label $\ell$ is at most by
$3\eps np^2$ larger than their number of neighbors in the second level with label 1.
\end{remark}

Next, we observe that a.a.s.\ all vertices in Level 3 have far more neighbors in $\vthree_2(1)$ than in $\vthree_2(\ell)$ for all $\ell\in [2,2k]$. Fix
\begin{equation*}
\mathfrak{N} := \tfrac{1}{10} (2\Lambda-1) \ssthree{2}{1}.
\end{equation*}

\begin{lemma}\label{ob:rd3level3}
Suppose $n^{2/3}\ll np \le \sqrt{15} n^{3/4} (\log n)^{1/4}$. Then, a.a.s.\ $\mathfrak{N} \ge \tfrac{1}{20}(2\Lambda-1)^2 n$ and every vertex $u$ in Level $3$ satisfies $|N(u)\cap C_2(1)|\ge |N(u)\cap C_2(\ell)|+\mathfrak{N} p$ for all $\ell\in [2,2k]$.
\end{lemma}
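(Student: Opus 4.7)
The first claim is an immediate consequence of Lemma~\ref{claim 3}, which a.a.s.\ guarantees $\ssthree{2}{1}\ge \tfrac{1}{2}(2\Lambda-1)n$; combined with the definition of $\mathfrak{N}$, this yields $\mathfrak{N}=\tfrac{1}{10}(2\Lambda-1)\ssthree{2}{1}\ge \tfrac{1}{20}(2\Lambda-1)^2 n$.

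For the second claim, the plan is to exploit the crucial fact that ALAP does not expose any edge with both endpoints in $C$ during rounds~$1$ or~$2$. Consequently, conditional on the $\sigma$-algebra generated by everything exposed through round~$2$ (in particular, on $(\ssthree{2}{\ell})_{\ell\in[2k]}$ and on the disjoint sets $C_2(1),\ldots,C_2(2k)$), for every $u\in C$ the random variables $(|N(u)\cap C_2(\ell)|)_{\ell\in[2k]}$ are independent binomials with parameters $\ssthree{2}{\ell}$ (up to a deterministic shift by at most $1$ to account for $u$ itself) and $p$; moreover these families are independent across distinct choices of $u\in C$, since disjoint edge sets are involved.

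I would then condition further on the a.a.s.\ event produced by Lemma~\ref{claim 3}, under which $\ssthree{2}{1}-\ssthree{2}{\ell}\ge \tfrac{1}{2}\bigl(1-\bigl(\tfrac{1-\Lambda}{\Lambda}\bigr)^{\ell-1}\bigr)\ssthree{2}{1}$. The right-hand side is minimized over $\ell\in[2,2k]$ at $\ell=2$, equals $\tfrac{1}{2}\cdot\tfrac{2\Lambda-1}{\Lambda}\ssthree{2}{1}$, and is therefore at least $\tfrac{1}{2}(2\Lambda-1)\ssthree{2}{1}=5\mathfrak{N}$ (using that $\Lambda\le 7/10$). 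Hence, for every $u\in C$ and every $\ell\in[2,2k]$,
\[
\mathbb{E}\bigl[|N(u)\cap C_2(1)|-|N(u)\cap C_2(\ell)|\bigr] \ge 5\mathfrak{N}p - 2p \ge 4\mathfrak{N}p.
\]
Applying Chernoff's bound (Lemma~\ref{lem chernoff}) separately to the two independent binomials $|N(u)\cap C_2(1)|$ and $|N(u)\cap C_2(\ell)|$, each deviates from its mean by more than $\mathfrak{N}p$ with probability at most $\exp\bigl(-\Omega((\mathfrak{N}p)^2/(\ssthree{2}{1}p))\bigr)=\exp\bigl(-\Omega((2\Lambda-1)^2\,\ssthree{2}{1}\,p)\bigr)$. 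A union bound over the $n$ choices of $u$ and the $O(k)$ choices of $\ell$ then finishes the argument provided this exponent is $\omega(\log n)$.

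The one quantitative point that needs checking, and arguably the main (mild) obstacle, is that the Chernoff exponent is indeed $\omega(\log n)$ throughout the regime $n^{2/3}\ll np\le \sqrt{15}\,n^{3/4}(\log n)^{1/4}$. Using $\ssthree{2}{1}\ge\tfrac{1}{2}(2\Lambda-1)n$, the exponent is of order $(2\Lambda-1)^3 np$. When $np^4\ge 4$ we have $2\Lambda-1=\tfrac{2}{5}$, so the exponent is $\Omega(np)\gg\log n$. When $np^4<4$ we have $2\Lambda-1=\Theta(\sqrt{np^4})$, so the exponent equals $\Theta\bigl((np^4)^{3/2}\cdot np\bigr)=\Theta\bigl((np)^{7}/n^{9/2}\bigr)$, which under the hypothesis $np\gg n^{2/3}$ is $\gg n^{14/3-9/2}=n^{1/6}\gg \log n$, as required.
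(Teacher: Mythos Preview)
Your proof is correct and follows essentially the same approach as the paper's: condition on the a.a.s.\ event of Lemma~\ref{claim 3}, exploit that edges within $C$ are unexposed so the neighbor counts are independent binomials, apply Chernoff to each, and union bound over $u$ and $\ell$; your explicit verification that $(2\Lambda-1)^3 np\gg \log n$ in both sub-regimes is a welcome addition that the paper leaves implicit. One minor (unused) slip: the families for distinct $u\in C$ are \emph{not} independent, since the edge $uu'$ contributes to both, but your argument only needs the per-$u$ tail bound together with a union bound, so this does not affect the proof.
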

\begin{proof}
We condition on the event that in ALAP,
\begin{equation}\label{eq:somelabel}
\Big\{\ssthree{2}{1}\geq \tfrac12(2\qc-1)n \text{ and for every } \ell\in [2,2k], \ssthree{2}{1} - \ssthree{2}{\ell}\geq \tfrac12\big(1-\big(\tfrac{1-\qc}{\qc}\big)^{\ell-1}\big)\ssthree{2}{1}\Big\},   
\end{equation}
which happens a.a.s.\ by Lemma~\ref{claim 3} and therefore implies the first part of the lemma.

Fix a vertex $u\in C$. Then, for every $\ell\in [2,2k]$,~\eqref{eq:somelabel} implies that $\ssthree{2}{1} - \ssthree{2}{\ell}\ge 3\mathfrak{N}$.
Now, fix $a_1 = \mathfrak{C}_2(1)-1$ and $a_2 = \tfrac{1}{2}(1 + (\tfrac{1-\Lambda}{\Lambda})^{\ell-1})\sthree_2(1)$, and define $X_1 = |N(u)\cap \vthree_2(1)|$ and $X_2 = |N(u)\cap \vthree_2(\ell)|$. Notice that $X_1$ dominates $\mathrm{Bin}(a_1, p)$ 
while $X_2$ is dominated by $\mathrm{Bin}(a_2, p)$. 
Thus, using Chernoff's bound, we get that 
\[
\begin{split}
\Prob(X_1-X_2\le 3\mathfrak{N} p)
&\le \Prob(X_1 - a_1p\le \mathfrak{N} p) + \Prob(X_2 - a_2p\ge  \mathfrak{N} p)\\
&\le \exp\left(-\frac{\mathfrak{N}^2 p^2}{2(a_1 p + \mathfrak{N} p)}\right) + \exp\Big(-\frac{\mathfrak{N}^2 p^2}{2 (a_2 p + \mathfrak{N} p)}\Big).
\end{split}
\]
Since each of the right-hand side terms above are bounded by
$\exp(-\Omega((2\qc-1)^2\mathfrak{C}_2(1)p))
= \exp(-\Omega((2\qc-1)^3np)) = o(n^{-2})$,
a union bound over all $\ell\in [2,2k]$ implies that 
$X_1\le X_2+\mathfrak{N} p$ 
with probability at most $o(kn^{-2}) = o(n^{-1})$. Finally, a union bound over all vertices in $C$ finishes the proof.
\end{proof}

\begin{remark}\label{rem:rd3level3}
For every $\eps,c>0$, there is $\delta = \delta(\eps, c) > 0$ such that the following holds:
for $np = c n^{2/3}$, $\mathfrak{N}$ as before the statement of Lemma~\ref{ob:rd3level3} and label $\widehat{\ell}_1$ defined as in~\eqref{eq:lhat}, the event
``$\mathfrak{N} \ge \delta(2\Lambda-1)^2 n$ and every vertex $u$ in Level $3$ satisfies $|N(u)\cap C_2(\widehat{\ell}_1)|\ge |N(u)\cap C_2(\ell)|+\mathfrak{N}p$ for all $\ell\in [2,2k]$'' holds with probability $1-\eps$.
The proof requires only minor modifications.
\end{remark}

\begin{lemma}\label{claim 6}
Suppose that $n^{2/3} \ll np \le \sqrt{15} n^{3/4} (\log n)^{1/4}$. Then, after round $5$ of LPA, a.a.s.\ all vertices have label $1$.
\end{lemma}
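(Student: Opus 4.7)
The plan is to show that a.a.s.\ at least $0.9\,n$ vertices carry label~$1$ after round $3$ of LPA, and then conclude via Lemma~\ref{claim 5} applied with $R=3$ and $\ell=1$ that after two more rounds all vertices carry label $1$. The hypothesis $np\ge n^{1/2+\eps}$ of Lemma~\ref{claim 5} is satisfied a fortiori in the regime $np\gg n^{2/3}$, so the last step is immediate once the round-$3$ claim is established.

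To handle round $3$, I will work inside ALAP and condition on the a.a.s.\ events of Observation~\ref{ob 3.5}, Lemma~\ref{claim F}, Lemma~\ref{ob 4.5} and Lemma~\ref{ob:rd3level3}. Pick a vertex $u\in C$ among the $n-o(n)$ vertices in Level~$3$ enjoying the conclusion of Lemma~\ref{ob 4.5}. For every $\ell\in [2,2k]$, combining the three aforementioned results yields
\[
|N[u]\cap V_{2}(1)| \;-\; |N[u]\cap V_{2}(\ell)| \;\ge\; \mathfrak{N}\,p \;-\; 90\,np^2 \;-\; 11,
\]
where the additive $11$ accounts for the at most $10$ Level-$1$ neighbors of $u$ (Observation~\ref{ob 3.5}) and one self-vote. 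The crucial check is $\mathfrak{N}p\gg np^2$ throughout the range of $p$: since $\mathfrak{N}\ge \tfrac{1}{20}(2\Lambda-1)^2 n$, this reduces to $(2\Lambda-1)^2\gg p$. When $np^4\ge 4$, one has $2\Lambda-1 = 2/5$ and the bound is trivial; when $np^4<4$, one has $2\Lambda-1=\sqrt{np^4}/5$ and the bound becomes $np^3\to\infty$, which is implied by $np\gg n^{2/3}$. Labels $\ell\in [2k+1,n]$ require a separate but trivial treatment: by Lemma~\ref{lemma 2}, each such label is carried by at most one vertex after round $2$, so it contributes at most one vote at $u$, negligible against the $\Theta(np)$ votes for label~$1$. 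Hence every good $u\in C$ switches to label~$1$ at round~$3$ of ALAP.

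To transfer the conclusion to LPA itself, I will use that by Lemma~\ref{lemma 2} the two processes can be coupled so that their round-$2$ labelings agree outside a set of size $K-2k = O((\log n)/p) = o(n)$; a standard Chernoff plus union bound argument then shows that a.a.s.\ every vertex has $O(\log n)$ neighbors in this discrepancy set. Since $\mathfrak{N}p\gg \log n$ throughout our regime, the margin survives this perturbation and the same good vertices obtain label~$1$ at round~$3$ of LPA as well. Finally, $|A\cup B|\le 2k + \tfrac{8}{3}knp = O(p^{-1}\sqrt{n\log n}) = o(n)$ by Lemma~\ref{claim F}, so that $(1-o(1))n\ge 0.9n$ vertices carry label~$1$ after round~$3$ of LPA, and Lemma~\ref{claim 5} completes the proof. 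The main technical obstacle is the margin verification, which has to be carried out separately in the two pieces of the piecewise definition of $\Lambda$ and has to survive the transfer from ALAP to LPA; everything else amounts to combining previously established a.a.s.\ properties.
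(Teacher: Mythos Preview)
Your proposal is correct and follows essentially the same route as the paper's proof: combine Lemmas~\ref{ob 4.5} and~\ref{ob:rd3level3} to secure a margin of order $\mathfrak{N}p$ for the good vertices in $C$, verify $\mathfrak{N}p\gg np^2$ and $\mathfrak{N}p\gg Kp\asymp\log n$, transfer from ALAP to LPA via the $V([K]\setminus[2k])$ discrepancy bound, and finish with Lemma~\ref{claim 5}. One harmless slip: a vertex $u\in C$ has \emph{no} Level-$1$ neighbors by the very definition of $C=V\setminus N[A]$, so your ``$-11$'' should really be ``$-1$'' (self-vote only) and the appeal to Observation~\ref{ob 3.5} at that point is unnecessary; similarly, the number of label-$1$ votes at $u$ is $\Omega((2\Lambda-1)np)$ rather than $\Theta(np)$, which is still ample to absorb the single vote from any label $\ell>2k$.
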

\begin{proof}
Recall $K = \lceil 2(\log n)/p\rceil$. On the one hand, Chernoff's bound and a union bound imply that all vertices $u\in V$ satisfy $|N(u)\cap V([K])| = O(K p)$. On the other hand, the labels attributed to the vertices by LPA and by ALAP after the second round coincide for all vertices outside $V([K])$. Thus, combining Lemmas~\ref{ob 4.5},~\ref{ob:rd3level3}, the inequality $\mathfrak{N}p\gg np^2$ and the fact that a.a.s.\ $\mathfrak{N} p = \Omega((2\Lambda-1)^2np)\gg K p$ (to ensure that a.a.s.\ $n-o(n)$ vertices get label 1 in the third round not only in ALAP but also in LPA) and Lemma~\ref{claim 5} yields the result.
\end{proof}

This proves Theorem~\ref{thm 1} in the regime $n^{2/3}\ll np\ll n$. For the regime $np = \Theta(n^{2/3})$, replacing Lemma~\ref{ob 4.5} with Remark~\ref{rem 4.5} and Lemma~\ref{ob:rd3level3} with Remark~\ref{rem:rd3level3} in the proof of Lemma~\ref{claim 6} shows that conditionally on the event $\cE_L\cap \cG_{L,\eps}$ and the event from Remark~\ref{rem:rd3level3}, a.a.s.\ only one label survives. However, since these events hold jointly with probability at least $1-3\eps$, and $\eps$ could be chosen arbitrarily small, thus implying Remark~\ref{rem:intermediate}.

\subsection{\texorpdfstring{The regime $n^{5/8+\eps}\le np\ll n^{2/3}$}{}}\label{sec 3.2}

Now, we fix $\eps\in (0,1/24)$ and concentrate on the regime $n^{5/8+\eps}\le np\ll n^{2/3}$ in ALAP. Note that all results in this section will be proved in this regime only, so we omit it from the statements of the lemmas.

Recall that $\sone = |A| = 2k$ with $k$ defined in~\eqref{eq:defk}. Our first task is to analyze the maximum of $(\szBsn{\ell})_{\ell=1}^{2k}$, which we denote by $\stwo^{(1)}$. For every $\ell\in [2k]$, define 
\[
z_\ell = n - (\ell-1)np + \tfrac12(\ell-1)(\ell-2)np^2.
\]

\begin{lemma}\label{lem approx}
One may couple the sequence $(\szBsn{\ell})_{\ell=1}^{2k}$ with a sequence of independent random variables $(Z_\ell)_{\ell=1}^{2k}$ such that a.a.s.\ for all $\ell\in [2k]$ we have:
\begin{itemize}
    \item $|Z_\ell - \szBsn{\ell}|\le (np)^{2/5}$,
    \item $Z_\ell\in \mathrm{Bin}\left(z_\ell, p\right)$.
\end{itemize}
\end{lemma}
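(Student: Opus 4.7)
My strategy is to expose the edges between $\{v_1,\ldots,v_{2k}\}$ and $V\setminus A$ sequentially for $\ell=1,\ldots,2k$, and at round $\ell$ to build $Z_\ell$ as a sum of exactly $z_\ell$ independent $\mathrm{Bernoulli}(p)$ trials chosen so as to overlap as much as possible with the trials defining $\szBsn{\ell}$. Concretely, for $v\in V\setminus A$ and $\ell\in[2k]$, let $\xi_{v,\ell}=\mathds{1}\{v_\ell v\in E(\cG(n,p))\}$, let $\mathcal{F}_{\ell-1}=\sigma(\xi_{v,i}:v\in V\setminus A,\ i<\ell)$, and let $A_\ell=\{v\in V\setminus A:\xi_{v,i}=0\ \forall\, i<\ell\}\in\mathcal{F}_{\ell-1}$, so that $\szBsn{\ell}=\sum_{v\in A_\ell}\xi_{v,\ell}$. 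On an enlarged probability space, introduce i.i.d.\ $\mathrm{Bernoulli}(p)$ variables $(\eta_{j,\ell})_{j,\ell}$ independent of the graph; let $S_\ell\subseteq A_\ell$ be any $\mathcal{F}_{\ell-1}$-measurable subset with $|S_\ell|=\min(z_\ell,|A_\ell|)$ (for instance, the first $\min(z_\ell,|A_\ell|)$ elements of $A_\ell$ in a fixed ordering of $V\setminus A$); and set
\[
Z_\ell \;:=\; \sum_{v\in S_\ell}\xi_{v,\ell}\;+\;\sum_{j=1}^{(z_\ell-|A_\ell|)^+}\eta_{j,\ell}.
\]

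The key observation is that $Z_\ell$ conditional on $\mathcal{F}_{\ell-1}$ always has distribution $\mathrm{Bin}(z_\ell,p)$, regardless of the realization of $|A_\ell|$: when $|A_\ell|\ge z_\ell$ the fresh sum is empty and $|S_\ell|=z_\ell$, whereas when $|A_\ell|<z_\ell$ we have $S_\ell=A_\ell$ and the two independent sums contribute $\mathrm{Bin}(|A_\ell|,p)$ and $\mathrm{Bin}(z_\ell-|A_\ell|,p)$. Since this conditional distribution does not depend on $\mathcal{F}_{\ell-1}$, $Z_\ell$ is independent of $\mathcal{F}_{\ell-1}$; combined with the fact that $Z_\ell$ uses only the fresh randomness $\xi_{\cdot,\ell}$ and $\eta_{\cdot,\ell}$ (themselves independent of $\eta_{\cdot,<\ell}$), a straightforward induction gives that $(Z_\ell)_{\ell=1}^{2k}$ are mutually independent. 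Moreover, subtracting the defining sums in each of the two cases yields the pointwise inequality $|Z_\ell-\szBsn{\ell}|\le X_\ell$, where $X_\ell$, conditionally on $\mathcal{F}_{\ell-1}$, is a $\mathrm{Bin}(||A_\ell|-z_\ell|,\,p)$ random variable.

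It remains to show that $X_\ell\le (np)^{2/5}$ for every $\ell$ a.a.s. A Taylor expansion $(1-p)^{\ell-1}=1-(\ell-1)p+\binom{\ell-1}{2}p^2+O(\ell^3p^3)$ combined with $|A_\ell|\in\mathrm{Bin}(n-2k,(1-p)^{\ell-1})$ yields the deterministic gap $|\mathbb{E}|A_\ell|-z_\ell|=O(k+n\ell^3p^3)=O(p^{-3}(\log n)^{3/2}/\sqrt{n})$ for all $\ell\in[2k]$, while a Chernoff estimate (Lemma~\ref{lem chernoff}) together with a union bound delivers $||A_\ell|-\mathbb{E}|A_\ell||=O(\sqrt{n\log n})$ a.a.s.\ uniformly in $\ell$. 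Setting $\Delta:=C\bigl(\sqrt{n\log n}+p^{-3}(\log n)^{3/2}/\sqrt{n}\bigr)$ with $C$ a large constant, one then has $||A_\ell|-z_\ell|\le\Delta$ a.a.s.\ for all $\ell$, and a second Chernoff application to $\mathrm{Bin}(\Delta,p)$ with a union bound over $\ell\in[2k]$ yields $X_\ell\le (np)^{2/5}$ a.a.s.\ uniformly; the two numerical comparisons $p\sqrt{n\log n}\ll (np)^{2/5}$ and $p^{-2}(\log n)^{3/2}/\sqrt{n}\ll (np)^{2/5}$ needed here follow by elementary algebra from $n^{5/8+\varepsilon}\le np\ll n^{2/3}$ and $\varepsilon>0$. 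The main obstacle to foresee is reconciling three demands simultaneously---the exact marginal $\mathrm{Bin}(z_\ell,p)$, mutual independence of the $Z_\ell$'s, and pointwise closeness to the highly correlated family $(\szBsn{\ell})$; the ``top-up with fresh Bernoullis'' device above is what resolves this, and the \emph{second-order} choice of $z_\ell$ is essential, since the first-order approximation $n-(\ell-1)np$ would incur a deterministic error of order $nk^2p^2=\Theta(p^{-2}\log n)$ that exceeds $(np)^{2/5}/p$ near the upper end $p\sim n^{-1/3}$ of the range.
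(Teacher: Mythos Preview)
Your proposal is correct and follows essentially the same approach as the paper: construct $Z_\ell$ as a sum of exactly $z_\ell$ fresh $\mathrm{Bernoulli}(p)$ indicators chosen $\mathcal{F}_{\ell-1}$-measurably so as to overlap maximally with the indicators defining $\szBsn{\ell}$, deduce independence from the fact that the conditional law of $Z_\ell$ given $\mathcal{F}_{\ell-1}$ is always $\mathrm{Bin}(z_\ell,p)$, and then control $|Z_\ell-\szBsn{\ell}|\sim\mathrm{Bin}(||A_\ell|-z_\ell|,p)$ via a second-order expansion of $\mathbb{E}|A_\ell|$ plus Chernoff. The only cosmetic differences are that the paper, instead of introducing auxiliary variables $\eta_{j,\ell}$ on an enlarged space, reuses actual edge indicators $\xi_{v,\ell}$ for vertices $v\in B_1([\ell-1])$ when $|A_\ell|<z_\ell$ (which are equally fresh at step $\ell$), and that the paper bounds the fluctuation of $\stwo_1([\ell-1])=n-2k-|A_\ell|$ around its mean by $O(\sqrt{knp\log n})$ rather than your cruder $O(\sqrt{n\log n})$; both bounds are well within the required $o((np)^{2/5}/p)$.
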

\begin{proof}
For every $\ell\in [2k]$, set $x_\ell = \ell^2 np^3$. First, we show that for every $\ell\in [2k]$,
\begin{equation}\label{eq:bound-det}
|\mathbb E \szBsn{\ell}-np+(\ell-1)np^2| \le x_\ell.
\end{equation}

Recall that $\szBsn{\ell}\in\mathrm{Bin}(n-2k,q^{\ell-1}p)$, so $\mathbb{E}\szBsn{\ell}=(n-2k)q^{\ell-1}p$. Since $q^{\ell-1}\leq 1-(\ell-1)p+\ell^2p^2$, we get
\[
\mathbb{E}\szBsn{\ell} -np+(\ell-1)np^2 
\leq nq^{\ell-1}p-np+(\ell-1)np^2
\leq \ell^2np^3=x_{\ell}.
\]
Moreover, since $q^{\ell-1}\geq 1-(\ell-1)p$, we have
\[
\mathbb{E}\szBsn{\ell}
-np+(\ell-1)np^2 
= np(q^{\ell-1}-1+(\ell-1)p)-2kq^{\ell-1}p
\geq -2kp.
\]
To conclude the proof of~\eqref{eq:bound-det}, observe that
$2kp\leq 32\sqrt{\frac{\log n}{np^2}}\leq 32n^{1-4\epsilon}p^3\sqrt{\log n}\leq x_\ell$ for~$n$ large enough.

Now, define the event
$$
\cA_\ell = \Big\{ \Big|\sum_{j=1}^{\ell-1}\big(\szBsn{j}-np+(j-1)np^2\big)\Big| \le \sum_{j=1}^{\ell-1}x_j+2\Big(2\log n\sum_{j=1}^{\ell-1}\mathbb E\szBsn{j}\Big)^{1/2} \Big\}.
$$
Then, \eqref{eq:bound-det} and the triangle inequality imply that 
\begin{equation*}
    \overline{\cA_\ell}\subseteq \Big\{\Big|\sum_{j=1}^{\ell-1}(\szBsn{j}-\mathbb E \szBsn{j})\Big|\ge 2\Big(2\log n\sum_{j=1}^{\ell-1}\mathbb E\szBsn{j}\Big)^{1/2}\Big\}.
\end{equation*}
Since 
$\sum_{j=1}^{\ell-1}\szBsn{j}\in \mathrm{Bin}(n-2k, 1-\onemp^{\ell-1})$, by Chernoff's bound
\[\begin{split}
\Prob(\overline{\cA_\ell})
&\le \Prob\Big(\big|\sum_{j=1}^{\ell-1}(\szBsn{j}-\mathbb E \szBsn{j})\big|\ge 2\Big(2 \log n\sum_{j=1}^{\ell-1}\mathbb  E \szBsn{j}\Big)^{1/2} \Big)\\
&\le 2\exp\left(-(2+o(1)) \log n\right)\le \frac{1}{n}.
\end{split}\]

Note that $\szBsn{\ell}$ equals the number of edges from $v_\ell$ to $V \setminus (\vone \cup \vtwo_1([\ell-1]))$. We define the random variable~$Z_\ell$ as the number of edges from $v_\ell$ to $V \setminus (\vone \cup U_\ell)$ where $U_\ell$ is a set of vertices of size $n-z_\ell-2k$ defined as follows:
\begin{itemize}
\item if $\stwo_1([\ell-1]) \ge n-z_\ell-2k$, then $U_\ell$ consists of $n-z_\ell-2k$ arbitrary vertices in $\vtwo_1([\ell-1])$,
\item otherwise, construct $U_\ell$ by adding a set of arbitrary $n - z_\ell -2k - \stwo_1([\ell-1])$ vertices from $V \setminus (\vone\cup\vtwo_1([\ell-1]))$ to $\vtwo_1([\ell-1])$.
\end{itemize}

Note that $(Z_\ell)_{\ell=1}^{2k}$ are independent random variables such that $Z_\ell \in \mathrm{Bin}(z_\ell,p)$ for all $\ell\in [2k]$. Moreover, using that by the triangle inequality
\[|n-z_{\ell}-2k-\szBsn{[\ell-1]}| \le 2k+\bigg|\sum_{j=1}^{\ell-1} (np - (j-1)np^2-\szBsn{j})\bigg|,\]
one can deduce that conditionally on $\cA_\ell$, $|\szBsn{\ell}-Z_{\ell}|$ is stochastically dominated by a random variable 
$$Y_\ell \sim \mathrm{Bin}\Big(2k+\sum_{j=1}^{\ell-1}x_j+ 2\Big(2 \log n\sum_{j=1}^{\ell-1}\mathbb  E \szBsn{j}\Big)^{1/2}, p\Big).$$
Using that $\mathbb E Y_\ell \le p \big(2k+\sum_{j=1}^{\ell-1} x_j + 10\sqrt{\ell np \log n}\big) = O(kp+k^3 n p^4 + \sqrt{knp^3\log n}) = o((np)^{2/5})$, Chernoff's bound implies that
$$
\Prob\big(Y_\ell\ge (np)^{2/5} \big) \le\; \frac{1}{n}.
$$
Hence, for all $\ell\in [2k]$, conditionally on the event $\cA_\ell$, we have $|\szBsn{\ell} - Z_\ell|\ge (np)^{2/5}$ with probability $O(n^{-1})$, so the desired result follows by a union bound over all $\ell\in [2k]$.
\end{proof}

\begin{remark}
Note that the previous proof can be extended for $np\in [n^{1/2+\varepsilon}, n^{2/3}]$ with any $\varepsilon\in (0,1/6]$ by further expansion of $\szBsn{\ell}$. However, since this lemma is not the true bottleneck in our argument, we do not pursue this here.
\end{remark}

Our next aim is to give a lower bound on the gap between the maximum and the second maximum of $(\szBsn{\ell})_{\ell=1}^{2k}$. We do this by estimating this gap for the sequence $(Z_\ell)_{\ell=1}^{2k}$ instead, and transfer our conclusion to $(\szBsn{\ell})_{\ell=1}^{2k}$ using Lemma~\ref{lem approx}. Define $Z^{(1)} = \max\{Z_\ell \colon \ell\in [2k]\}$ and $Z^{(2)} = \max\{ Z_\ell \colon \ell\in [2k], Z_\ell < Z^{(1)}\}$. To begin with, we estimate $Z^{(1)}$.

\begin{lemma}\label{lem:max}
\begin{equation*}
\frac{Z^{(1)} - np}{\sqrt{np \log(1/(np^3))}} \xrightarrow{\mathbb P} 1 \text{ as } n\to \infty.
\end{equation*}
\end{lemma}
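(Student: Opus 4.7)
Set $\mu_\ell=z_\ell p$, $s=\sqrt{np}$, and $b_\ell=(np-\mu_\ell)/s=(\ell-1)\sqrt{np^3}(1+o(1))$; for a small constant $\delta>0$, let $M_\pm=(1\pm\delta)\sqrt{np\log(1/(np^3))}$ and $a_\pm=M_\pm/s=(1\pm\delta)\sqrt{\log(1/(np^3))}$. In the present regime one has $np^3=o(1)$, $2kp=o(1)$ (so $\mu_\ell\sim np$ uniformly in $\ell\in[2k]$), and $M_\pm=o(\mu_\ell)$. The goal is to establish matching one-sided bounds:
\[
\mathbb P(Z^{(1)}>np+M_+)=o(1)\qquad\text{and}\qquad\mathbb P(Z^{(1)}\le np+M_-)=o(1).
\]

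\paragraph{Upper bound.}
I would prove the first inequality by a union bound, splitting the index range at $L^\ast=C\sqrt{\log n/(np^3)}$ with $C$ a large constant. For $\ell\le L^\ast$, one has $b_\ell=O(\sqrt{\log n})$ and $(a_++b_\ell)s=o(\mu_\ell)$, so Lemma~\ref{lem chernoff} yields
\[
\mathbb P(Z_\ell>np+M_+)\le\exp\!\Big(-\tfrac{(a_++b_\ell)^2}{2}(1-o(1))\Big).
\]
Summing and comparing with $\int_0^{L^\ast}\exp(-(a_++x\sqrt{np^3})^2/2)\,dx$ (the summand varies slowly in $\ell$ because $\sqrt{np^3}=o(1)$), the substitution $u=a_++x\sqrt{np^3}$ gives an upper bound $O((a_+\sqrt{np^3})^{-1}(np^3)^{(1+\delta)^2/2})$, which is $o(1)$ since $(1+\delta)^2/2>1/2$ and $np^3\to 0$. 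For $\ell>L^\ast$, Lemma~\ref{lem chernoff} together with $np-\mu_\ell\ge c\ell np^2$ and $\mathbb EZ_\ell\sim np\gg\sqrt{n\log n}$ yields $\mathbb P(Z_\ell>np)\le\exp(-c\ell^2np^3)$, and by the choice of $C$ this is $o(1/k)$, so the sum over $\ell>L^\ast$ is also $o(1)$.

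\paragraph{Lower bound.}
By independence of $(Z_\ell)$ and $1-x\le e^{-x}$,
\[
\mathbb P(Z^{(1)}\le np+M_-)=\prod_{\ell=1}^{2k}\mathbb P(Z_\ell\le np+M_-)\le\exp\!\Big(-\!\sum_{\ell=1}^{2k}\mathbb P(Z_\ell>np+M_-)\Big),
\]
so it suffices to show the sum in the exponent diverges. Slud's inequality (Lemma~\ref{lem slud}), applied with $t=(a_-+b_\ell)s$, together with $\mathrm{Var}(Z_\ell)=\mu_\ell(1-p)\sim np$, gives
\[
\mathbb P(Z_\ell>np+M_-)\ge 1-\Phi\big((a_-+b_\ell)(1+o(1))\big)\ge\frac{c\,e^{-(a_-+b_\ell)^2(1+o(1))/2}}{1+a_-+b_\ell},
\]
using the standard tail bound $1-\Phi(x)\ge cx e^{-x^2/2}/(1+x^2)$. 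Summing over $\ell\in[1,1/\sqrt{np^3}]$, the summand varies slowly so the sum is at least $c'(np^3)^{-1/2}(1+a_-)^{-1}e^{-(a_-+1)^2/2}$, which equals $c''(np^3)^{(1-\delta)^2/2-1/2}e^{-a_-}/(1+a_-)$. Since $(1-\delta)^2/2-1/2=-\delta+\delta^2/2<0$ for small $\delta>0$, the factor $(np^3)^{(1-\delta)^2/2-1/2}$ grows polynomially in $n$, while $e^{-a_-}/(1+a_-)$ decays only sub-polynomially as $\exp(-O(\sqrt{\log n}))$; hence the sum diverges.

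\paragraph{Main obstacle.}
The delicate point is the lower bound: one needs a quantitative tail lower bound on each $Z_\ell$ that is sharp enough to make the sum diverge after incurring the factor $(np^3)^{1/2}$ from integrating over $\ell$. The Gaussian normal tail obtained via Slud suffices, but it requires that both the Berry--Esseen-type error and the discreteness-to-integral conversion be smaller than the leading order term $(np^3)^{-\delta}$, which is only polynomial in $n$; this is what forces the careful choice of summation window $\ell\in[1,1/\sqrt{np^3}]$ and the separation between $a_-^2$ and $a_+^2$.
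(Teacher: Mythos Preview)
Your proposal is correct and follows essentially the same approach as the paper's proof. Both arguments combine Chernoff's bound and Slud's inequality to obtain matching upper and lower estimates for the tail probabilities $\mathbb P(Z_\ell>np+M_\pm)$, then sum over $\ell$; the paper packages the two directions together via the product $\prod_\ell\mathbb P(Z_\ell\le T_n(\zeta))$ and a geometric-series estimate for the sum, while you separate them (union bound plus an integral comparison for the upper bound, product form plus Slud for the lower bound) and split the index range at $L^\ast$, but the underlying computation---that the sum behaves like $(np^3)^{-1/2+o(1)}$ against a prefactor $(np^3)^{\zeta/2}$---is the same.
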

\begin{proof}
For any $\zeta > 0$, define $T_n = T_n(\zeta) = np+\sqrt{\zeta np\log(1/(np^3))}$. Then, by independence of $(Z_\ell)_{\ell=1}^{2k}$, we have $\Prob\big(\bigcap_{\ell=1}^{2k} \{Z_\ell \le T_n\}\big)=\prod_{\ell=1}^{2k} \Prob(Z_\ell \le T_n)$. We now provide upper and lower bounds for $\Prob\big(Z_\ell \le T_n\big)$. On the one hand, the fact that $Z_\ell\in \mathrm{Bin}(z_\ell, p)$ implies that $\mathbb E Z_{\ell} = z_{\ell}p = (1+o(1))np\gg T_n + 1 - z_{\ell}p$ (the $o(1)$ terms here and below may be chosen independently of $\ell$).
Combining this with Chernoff's bound yields
$$
\Prob\big(Z_\ell\ge T_n+1\big) = \Prob\big(Z_\ell-\mathbb E Z_{\ell}\ge T_n+1-z_{\ell}p\big) \le \exp\Big(-(1+o(1))\frac{(T_n+1-z_\ell p)^2}{2np}\Big).
$$
On the other hand, recalling that $q=1-p$, Slud's inequality (Lemma~\ref{lem slud}) yields
$$
\Prob\big(Z_\ell\ge T_n+1\big) \ge 1 - \Phi\left(\frac{T_n+1-z_\ell p}{\sqrt{z_\ell p\onemp}}\right),
$$
where we recall that $\Phi$ is the cumulative density function of a standard normal random variable.
Using that, as $x\to \infty$,
\[\int_{x}^{\infty} \mathrm{e}^{-u^2/2} \mathrm{d}u \ge \int_{x}^{x+1/x^2} \mathrm{e}^{-u^2/2} \mathrm{d}u \ge \frac{1}{x^2} \mathrm{e}^{-x^2/2+o_x(1)} = \mathrm{e}^{-(1+o_x(1))x^2/2},\]
we obtain that 
$$
\Prob\big(Z_\ell\ge T_n+1\big) \ge \exp\Big(-(1+o(1))\frac{(T_n+1-z_\ell p)^2}{2np}\Big).
$$
By our choice of $T_n$, expanding the square and canceling the factor $np$, we have 
\begin{align}
\frac{1}{2np}(T_n+1-z_\ell p)^2
& = \frac{1}{2np}\Big(\sqrt{\zeta np\log(1/(np^3))}+(1+o(1))(\ell-1)np^2\Big)^2\label{eq:1+o(1) term}\\
& = \frac12\zeta\log(1/(np^3))\nonumber\\
& \qquad + (1+o(1))\Big((\ell-1)\sqrt{\zeta np^3\log(1/(np^3))}+\frac12(\ell-1)^2np^3\Big).\nonumber
\end{align}
Hence, since $1-x=e^{-(1+o(1))x}$ as $x\to 0$ and $\Prob(Z_\ell\leq T_n)=1-\Prob(Z_\ell\ge T_n+1) = 1 - o(1)$, 
\begin{align}
&\prod_{\ell=1}^{2k}\Prob(Z_\ell\leq T_n) 
= \prod_{\ell=1}^{2k}(1-\Prob(Z_\ell\geq T_n+1)) = \exp\Big(-(1+o(1))\sum_{\ell=1}^{2k}\Prob(Z_\ell\geq T_n+1)\Big)\nonumber\\
=\;
&\exp\Big(-(np^3)^{\zeta/2+o(1)}\sum_{\ell=1}^{2k} \exp\Big(-(1+o(1))\Big((\ell-1)\sqrt{\zeta np^3\log(1/(np^3))}\label{eq:that I need}\\
&\hspace{30em}+\frac12(\ell-1)^2np^3\Big)\Big)\Big).\nonumber
\end{align}

Let us estimate the last sum. For a lower bound, note that summing up to $\ell_* = (np^3 \log(1/(np^3)))^{-1/2}$ shows that the sum is bounded from below by
\begin{equation}\label{eq i_*}
\left(\frac{1}{\sqrt{np^3 \log(1/(np^3))}}\right)^{1+o(1)} = \frac{1}{(np^3)^{1/2+o(1)}}.
\end{equation}

\noindent
On the other hand, for an upper bound, note that
\begin{equation}\label{eq:UB Z_ell}
\begin{split}
&\sum_{\ell=1}^{2k} \exp\Big(-(1+o(1))\Big((\ell-1)\sqrt{\zeta np^3\log(1/(np^3))}+\frac12(\ell-1)^2np^3\Big)\Big)\\
\le\; 
&\sum_{\ell=1}^{2k} \exp\Big(-(1+o(1))(\ell-1)\sqrt{\zeta np^3\log(1/(np^3))}\Big)\\
\le\; 
&\frac{1}{1-\exp\Big(-(1+o(1))\sqrt{\zeta np^3\log(1/(np^3))}\Big)} \\
\le\;
& \frac{1}{(np^3)^{1/2+o(1)}}.
\end{split}
\end{equation}
\noindent
We conclude that if $\zeta > 1$, then $\Prob\big(\bigcap_{\ell=1}^{2k} \{Z_\ell \le T_n\}\big)$ converges to 1 as $n \to \infty$, and if $\zeta < 1$, it converges to 0, and the proof of the lemma is finished. 
\end{proof}
Now, define $k_*=\tfrac{1}{2}\sqrt{\tfrac{\log(1/(np^3))}{2np^3}} \ll 2k$,  $Z^{(1)}_* = \max_{1\le \ell\le k_*} Z_\ell$ and $Z^{(2)}_* = \max\{Z_\ell: \ell\in [k_*], Z_\ell < Z^{(1)}_*\}$.

\begin{remark}\label{rem:generalization}
Note that~\eqref{eq i_*} and~\eqref{eq:UB Z_ell} may be adapted to analyze $\max_{\ell\in [\lfloor k_*/2\rfloor, 2k]} Z_\ell$ and $\max_{\ell\in [k_*, 2k]} Z_\ell$. 
Let us have a closer look at the latter case, the former being analogous. To begin with, instead of starting the sum in~\eqref{eq:that I need} from $\ell=1$, we start it from $\ell = k_*$. 
Now, as in~\eqref{eq i_*}, summing over the first $\ell_*$ terms starting from $k_*$ form a decreasing sequence. Thus, on the one hand,
\begin{align*}
& \sum_{\ell=k_*+1}^{2k}\exp\Big(-(1+o(1))\Big((\ell-1)\sqrt{\zeta np^3\log(1/(np^3))}+\frac{1}{2}(\ell-1)^2 np^3\Big)\Big) \\
& \quad \geq 
\sum_{\ell=k_*+1}^{k_*+\ell_*}\exp\Big(-(1+o(1))\Big((\ell-1)\sqrt{\zeta np^3\log(1/(np^3))}+\frac{1}{2}(\ell-1)^2np^3\Big)\Big) \\
& \quad \geq 
\ell_* \exp\Big(-(1+o(1))(k_*+\ell_*)\sqrt{\zeta np^3\log(1/(np^3))}+\frac{1}{2}(k_*+\ell_*)^2np^3\Big)\Big)\\
& \quad =
\Big(\frac{1}{np^3}\Big)^{(1-\sqrt{\zeta/2}-1/8+o(1))/2}
\end{align*}
and, on the other hand, 
\begin{align*}
& \sum_{\ell=k_*+1}^{2k}\exp\Big(-(1+o(1))\Big((\ell-1)\sqrt{\zeta np^3\log(1/(np^3))}+\tfrac{1}{2}(\ell-1)^2np^3\Big)\Big) \\
& \quad \leq 
\sum_{\ell=k_*+1}^{2k}\exp\Big(-(1+o(1))(\ell-1)\Big(\sqrt{\zeta np^3\log(1/(np^3))}+\tfrac{1}{2}k_* np^3\Big)\Big) \\ 
& \quad \leq
\frac{\exp\Big(-(1+o(1))k_*\Big(\sqrt{\zeta np^3\log(1/(np^3))}+k_* np^3/2\Big)\Big)}{1-\exp\Big(-(1+o(1))\Big(\sqrt{\zeta np^3\log(1/(np^3))}+k_* np^3/2\Big)\Big)} \\
& \quad =
\Big(\frac{1}{np^3}\Big)^{(1-\sqrt{\zeta/2}-1/8+o(1))/2}.
\end{align*}

As a consequence, $\prod_{\ell=k_*}^{2k}\Prob(Z_\ell\leq T_n) = \exp(-(np^3)^{(\zeta - 1+\sqrt{\zeta/2}+1/8)/2 + o(1)})$. 
As $n\to \infty$, solving the equation $\zeta + \sqrt{\zeta/2}-7/8=0$ leads to
\begin{equation*}
\frac{\max_{\ell\in [k_*, 2k]} Z_\ell - np}{\sqrt{np \log(1/(np^3))}} \xrightarrow{\mathbb P} \frac98 - \frac{1}{\sqrt{2}} \approx 0.418, 
\end{equation*}
and similarly, replacing $k_*$ by $k_*/2$,
$$\frac{\max_{\ell\in [\lfloor k_*/2\rfloor, 2k]} Z_\ell - np}{\sqrt{np \log(1/(np^3))}} \xrightarrow{\mathbb P} \frac{33}{32}-\frac{1}{\sqrt{8}}\approx 0.678.
$$
\end{remark}

\begin{corollary}\label{cor second max}
A.a.s.\ $Z^{(1)} = Z^{(1)}_*$ and $Z^{(2)} = Z^{(2)}_*$.
\end{corollary}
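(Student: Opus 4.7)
The plan is to combine Lemma~\ref{lem:max} with Remark~\ref{rem:generalization} via a short matching-of-scales argument. Set, as in the proof of Lemma~\ref{lem:max}, $T_n(\zeta) = np + \sqrt{\zeta\cdot np\log(1/(np^3))}$. By Lemma~\ref{lem:max}, a.a.s.\ $Z^{(1)} \geq T_n(0.99)$, while Remark~\ref{rem:generalization} yields a.a.s.\ $\max_{\ell\in [k_*,2k]} Z_\ell \leq np + (0.42)\sqrt{np\log(1/(np^3))}$. Since the two thresholds are separated by a constant factor on the scale $\sqrt{np\log(1/(np^3))}$, the first equality $Z^{(1)}=Z^{(1)}_*$ is immediate: the global maximum cannot be attained at any index in $[k_*,2k]$.

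The content of the statement lies in the second equality $Z^{(2)}=Z^{(2)}_*$, for which it suffices to show that $Z^{(2)}_*$ also exceeds $\max_{\ell\in [k_*,2k]} Z_\ell$ a.a.s. The plan is to fix any $\zeta\in (0.2, 1)$ and count
\[
N_\zeta := \big|\{\ell\in [k_*] : Z_\ell > T_n(\zeta)\}\big|.
\]
Re-reading the computation in the proof of Lemma~\ref{lem:max} restricted to $\ell\in [k_*]$, we have
\[
\mathbb E[N_\zeta] \;=\; \sum_{\ell=1}^{k_*}\Prob(Z_\ell > T_n(\zeta)) \;=\; (np^3)^{(\zeta-1)/2+o(1)},
\]
because the main contribution to the sum comes from the first $\ell_* = 1/\sqrt{np^3\log(1/(np^3))}$ indices and $\ell_*\ll k_*$, so restricting to $[k_*]$ does not change the leading order. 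Since $\eps<1/24$ forces $np^3 = o(1)$ in the present regime, $\mathbb E[N_\zeta]\to\infty$ as $\zeta<1$. By independence of $(Z_\ell)_{\ell=1}^{2k}$, $N_\zeta$ is a sum of independent Bernoulli indicators with divergent mean, so Chebyshev's inequality gives $N_\zeta\ge 2$ a.a.s., hence $Z^{(2)}_* \ge T_n(\zeta)$ a.a.s.

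To conclude, note that $\sqrt{\zeta} > 0.42 > 9/8 - 1/\sqrt{2}$, so Remark~\ref{rem:generalization} and the previous paragraph together yield $Z^{(2)}_* > \max_{\ell\in [k_*,2k]} Z_\ell$ a.a.s., forcing both the largest and second-largest values of $(Z_\ell)_{\ell=1}^{2k}$ to be attained on $[k_*]$, which gives the corollary. The main (mild) obstacle is verifying that restricting the sum in the estimate of $\mathbb E[N_\zeta]$ from $[2k]$ down to $[k_*]$ preserves the leading order; this is essentially book-keeping from the bounds already established in \eqref{eq i_*} and the preceding display in the proof of Lemma~\ref{lem:max}.
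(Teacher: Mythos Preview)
Your approach is correct in spirit and differs from the paper's. The paper argues purely via Remark~\ref{rem:generalization}: it shows that a.a.s.\ the global maximum is attained at some index in $[1,k_*/2)$ (since $1>0.678$), and separately that the maximum over $[\lfloor k_*/2\rfloor,2k]$ strictly exceeds the maximum over $[k_*,2k]$ (since $0.678>0.418$). This produces two indices in $[k_*]$ with \emph{distinct} values, both above $\max_{\ell\ge k_*}Z_\ell$, so $Z^{(2)}_*>\max_{\ell\ge k_*}Z_\ell$ without any tie analysis. Your route instead counts exceedances of a single threshold via the first and second moment; this is arguably more elementary (you only need one case of Remark~\ref{rem:generalization} rather than two) and makes transparent that in fact a divergent number of indices in $[k_*]$ beat the tail $[k_*,2k]$.

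There is, however, a small but genuine gap in the line ``$N_\zeta\ge 2$ a.a.s., hence $Z^{(2)}_*\ge T_n(\zeta)$ a.a.s.'' Recall that $Z^{(2)}_*$ is defined as the largest value \emph{strictly below} $Z^{(1)}_*$, not the second order statistic. Thus $N_\zeta\ge 2$ only yields $Z^{(2)}_*\ge T_n(\zeta)$ once you know the two exceedances take distinct values. This is easy to supply: for any $i\ne j$ in $[k_*]$, $\Prob(Z_i=Z_j)=O((np)^{-1/2})$, so by a union bound $\Prob(\exists\, i\ne j\in[k_*]:Z_i=Z_j)=O(k_*^2/\sqrt{np})$; since $k_*^2=\Theta(\log(1/(np^3))/(np^3))$ and $np^3\sqrt{np}\ge n^{3/16+o(1)}$ in the regime $np\ge n^{5/8+\eps}$, this probability is $o(1)$. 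With this one-line addition (or, alternatively, by noting that $N_\zeta\to\infty$ in probability while the multiplicity of the maximum is a.a.s.\ one), your argument is complete.
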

\begin{proof}
We show that a.a.s.\ $Z^{(2)}_* > \max_{\ell\, >\, k_*} Z_j$, which implies the statement of the corollary. Firstly, Lemma~\ref{lem:max} together with the second conclusion Remark~\ref{rem:generalization} implies that a.a.s.~$Z^{(1)}>\max_{\ell\in [k_*/2,2k]}Z_j$. Similarly, Lemma~\ref{lem:max} and the first conclusion of Remark~\ref{rem:generalization} imply that a.a.s.\ $\max_{\ell\in [\lfloor k_*/2\rfloor, 2k]} Z_\ell > \max_{\ell\in [k_*, 2k]} Z_\ell$, and thus finishes the proof of the corollary.
\end{proof}

Next, we estimate $Z_*^{(1)} - Z_*^{(2)}$. The following lemma is a general result for binomial random variables.

\begin{lemma}\label{lem decreasing}
Fix $n\in \mathbb N$ and $t\in [n]$. Then, the function $s\in \mathbb N\cap [t, n-1]\mapsto \Prob(Z_*^{(2)}\le Z_*^{(1)} - t\mid Z_*^{(1)} = s)$ is increasing in $s$.
\end{lemma}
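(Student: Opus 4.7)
The plan is to write $P(s):=\Prob(Z_*^{(2)}\le s-t\mid Z_*^{(1)}=s)$ as a ratio $N(s)/D(s)$, where
\[
N(s) = \sum_{j=1}^{k_*} \Prob(Z_j=s)\prod_{\ell\ne j}\Prob(Z_\ell\le s-t), \qquad D(s) = \Prob(Z_*^{(1)}=s) = \prod_{\ell=1}^{k_*} F_\ell(s) - \prod_{\ell=1}^{k_*} F_\ell(s-1),
\]
with $F_\ell$ the CDF of $Z_\ell$. The explicit form of $N(s)$ uses the standing hypothesis $t\ge 1$: on $\{Z_*^{(1)}=s\}$ any tie at the maximum forces $Z_*^{(2)}=s>s-t$, so only configurations with a unique $j$ attaining $Z_j=s$ (and all other $Z_\ell\le s-t$) contribute.

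The inequality $P(s+1)\ge P(s)$ will be established by decomposing $P(s)=A(s)\cdot B(s)$ with
\[
A(s) = \Prob(\text{max unique}\mid Z_*^{(1)}=s), \qquad B(s) = \Prob(Z_*^{(2)}\le s-t\mid Z_*^{(1)}=s,\text{max unique}),
\]
and showing both factors are non-decreasing in $s$. Conditioning on the max being uniquely attained at index $J$, the variables $(Z_\ell)_{\ell\ne J}$ are independent with law $Z_\ell\mid Z_\ell<s$, giving
\[
B(s) = \sum_{j=1}^{k_*} \mu_j(s)\prod_{\ell\ne j}\sigma_\ell(s),\qquad \sigma_\ell(s):=\frac{F_\ell(s-t)}{F_\ell(s-1)},\qquad \mu_j(s)\propto \Prob(Z_j=s)\prod_{\ell\ne j}F_\ell(s-1).
\]
Both monotonicities are fed by two consequences of log-concavity of the binomial law: (i) log-concavity of $F_\ell$ implies $F_\ell(s+1)/F_\ell(s)$ is non-increasing in $s$, so for $t\ge 1$ one gets $\sigma_\ell(s)$ non-decreasing in $s$; (ii) log-concavity of the PMF implies the reverse hazard $\rho_\ell(s):=\Prob(Z_\ell=s)/F_\ell(s-1)$ is non-increasing in $s$.

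Monotonicity of $A(s)$ follows cleanly from (ii). Writing $A(s) = e_1(\rho(s))/\bigl(\prod_\ell(1+\rho_\ell(s))-1\bigr)$ with $e_1$ the first elementary symmetric polynomial, one checks coordinate-wise monotonicity: $\partial A/\partial\rho_k\le 0$ reduces to the inequality $\prod_{\ell\ne k}(1+x_\ell)\cdot\bigl(1-\sum_{\ell\ne k}x_\ell\bigr)\le 1$ for $x_\ell\ge 0$, which is trivial when $\sum x_\ell\ge 1$ and otherwise follows from $\prod(1+x_\ell)\le e^{\sum x_\ell}\le 1/(1-\sum x_\ell)$. Since each $\rho_\ell(s)$ is non-increasing in $s$ by (ii), $A(s)$ is non-decreasing.

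The main obstacle will be the monotonicity of $B(s)$. Each value $V_j(s):=\prod_{\ell\ne j}\sigma_\ell(s)$ is individually non-decreasing in $s$ by (i), but the weights $\mu_j(s)$ also depend on $s$. The plan is to expand $B(s+1)-B(s)$ as a double sum over $i,j$, split it into a pointwise non-negative part proportional to $V_j(s+1)-V_j(s)\ge 0$ plus a covariance part of the form $\sum_{i,j}V_j(s)\bigl[\mu_j(s+1)\mu_i(s)-\mu_j(s)\mu_i(s+1)\bigr]$, and conclude via an FKG/rearrangement argument exploiting the explicit binomial PMF ratios $\Prob(Z_\ell=s+1)/\Prob(Z_\ell=s)=(z_\ell-s)p/((s+1)(1-p))$, whose monotonicity in $\ell$ is what couples the orderings of $V_j$ and of the weight shifts. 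This covariance estimate is the technically heaviest step and is the reason the proof is deferred to Section~\ref{sec:suppl}; once it is in hand, multiplying the two monotonicities yields $P(s+1)\ge P(s)$.
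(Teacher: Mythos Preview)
The paper's argument is considerably shorter than yours. It conditions directly on $\ell_1=\min\{\ell\in[k_*]:Z_\ell=Z_*^{(1)}\}$: given $\ell_1=\ell$ and $Z_*^{(1)}=s$, the remaining coordinates are independent with laws $Z_j\mid Z_j\le s-1$ for $j<\ell$ and $Z_j\mid Z_j\le s$ for $j>\ell$, so the target probability factors as
\[
\prod_{j<\ell}\frac{F_j(s-t)}{F_j(s-1)}\prod_{j>\ell}\frac{F_j(s-t)}{F_j(s)},
\]
and log-concavity of the binomial CDF makes each factor non-decreasing in $s$. The paper then simply asserts that this ``implies the lemma''; there is no $A\cdot B$ split and no discussion of why averaging over $\ell$ with the $s$-dependent weights $\Prob(\ell_1=\ell\mid Z_*^{(1)}=s)$ preserves monotonicity. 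That averaging step is exactly what you flag as the ``main obstacle,'' so in this respect you have been more careful than the paper.

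Nevertheless, your proposal does not actually close the gap. After conditioning on the unique maximiser you face the same weighted-average-of-increasing-functions problem, now for $B(s)=\sum_j\mu_j(s)V_j(s)$, and you defer the required ``covariance estimate'' rather than proving it. The one-line sketch (an FKG/rearrangement argument driven by the $\ell$-monotonicity of $(z_\ell-s)p/((s{+}1)(1{-}p))$) is not a proof: symmetrising the covariance term shows that what you need is that $j\mapsto V_j(s)$ and $j\mapsto \mu_j(s{+}1)/\mu_j(s)$ are comonotone, and neither ordering is established---the first requires that $\ell\mapsto F_\ell(s-t)/F_\ell(s-1)$ is monotone in $\ell$, the second a non-trivial comparison of hazard-type ratios. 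Your treatment of $A(s)$ is clean and correct, but the monotonicity of $B(s)$, which is the entire content of the lemma beyond the case $t=1$, remains unproved; and your reference to Section~\ref{sec:suppl} points to the proof of an unrelated lemma.
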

\begin{proof}
Firstly, define $\ell_1 = \min\{\ell\in [k_*], \, Z^{(1)}_* = Z_\ell\}$. Note that by Corollary~\ref{cor second max}, $\ell_1$ a.a.s.\ coincides with $\min\{\ell\in [2k], \, Z^{(1)} = Z_\ell\}$.  We show that for every $\ell\in [k_*]$, the function $s\in \mathbb N\mapsto \Prob(Z_*^{(2)}\le Z_*^{(1)} - t\mid Z_*^{(1)} = s, \ell_1=\ell)$ is increasing, which implies the lemma. Let us condition on the event $\ell_1 = \ell$. We have 
\[\begin{split}
\Prob(Z_*^{(2)}\le Z_*^{(1)} - t\mid Z_*^{(1)} = s, \ell_1=\ell) 
&= \prod_{j\in [\ell-1]} \Prob(Z_j\le s-t\mid Z_j\le s-1)\\ &\hspace{10em}\cdot \prod_{j=\ell+1}^{k_*} \Prob(Z_j\le s-t\mid Z_j\le s),
\end{split}\]
where we used the independence of the random variables $(Z_j)_{j=1}^{k_*}$. Now, let us fix $j\in [\ell+1, k_*]$ (the case when $j\in [\ell-1]$ is treated analogously). On the one hand, given positive integers $t$ and $s\ge t$,
\begin{equation*}
    \Prob(Z_j\le s-t\mid Z_j\le s) = \frac{\Prob(Z_j\le s-t)}{\Prob(Z_j\le s)}
\end{equation*}
and
\begin{equation*}
\Prob(Z_j\le s+1-t\mid Z_j\le s+1) = \frac{\Prob(Z_j\le s+1-t)}{\Prob(Z_j\le s+1)}.
\end{equation*}
On the other hand, it is well-known that binomial random variables have log-concave probability mass functions. Hence, the cumulative distribution function of $Z_j$, say $F$, is also log-concave (see for instance Proposition~1-1~(ii) in~\cite{Ros02}). Then, observing that for $\lambda=(t+1)^{-1}$ we have that
$s=\lambda(s-t)+(1-\lambda)(s+1)$ and $s+1-t=(1-\lambda)(s-t)+\lambda (s+1)$, it follows that
\[
\log F(s-t)+\log F(s+1) \leq \log F(s+1-t)+\log F(s),
\]
which is equivalent to 
\[\frac{\Prob(Z_j\le s-t)}{\Prob(Z_j\le s)} = \frac{F(s-t)}{F(s)} \le \frac{F(s+1-t)}{F(s+1)} = \frac{\Prob(Z_j\le s+1-t)}{\Prob(Z_j\le s+1)},\]
thereby concluding the proof of the lemma.
\end{proof}

By Lemma~\ref{lem:max} one can define a sequence of positive real numbers $(\varepsilon_n)_{n \ge 1}$ converging to 0 and such that, on the one hand, $\eps_n\ge (\log(np))^{-1/2}$ for all sufficiently large $n$, and moreover
$$M_n = np+\sqrt{(1-\tfrac{\varepsilon_n}{2})np\log(1/(np^3))}$$
is a.a.s.\ smaller than $Z^{(1)}_*$. Also, define $\gamma_n=(np)^{1/2-\varepsilon_n}$.

\begin{lemma}\label{lem fst snd max}
$$\Prob\big( Z_*^{(2)} \le Z_*^{(1)}-2\gamma_n\big) =1-o(1).$$
\end{lemma}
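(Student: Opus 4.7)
The plan is to combine the monotonicity from Lemma~\ref{lem decreasing} with a union bound over $[k_*]$, controlling each summand via a local-limit-type estimate on the binomial pmf. First, I would verify that $\Prob(Z_*^{(1)}\ge\lceil M_n\rceil)=1-o(1)$: by the proof of Lemma~\ref{lem:max} applied with $\zeta=1-\eps_n/2$, one has $\prod_{\ell\in[k_*]}\Prob(Z_\ell\le M_n)=\exp(-(np^3)^{-\eps_n/4+o(1)})=o(1)$ (since $\eps_n\log(1/(np^3))\to\infty$ by the assumption $\eps_n\ge(\log(np))^{-1/2}$), and Corollary~\ref{cor second max} identifies this product with $\Prob(Z_*^{(1)}<\lceil M_n\rceil)$. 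Since Lemma~\ref{lem decreasing} states that $s\mapsto\Prob(Z_*^{(2)}>Z_*^{(1)}-2\gamma_n\mid Z_*^{(1)}=s)$ is non-increasing, conditioning on the value of $Z_*^{(1)}$ gives
\[\Prob(Z_*^{(2)}>Z_*^{(1)}-2\gamma_n)\le o(1)+\Prob(Z_*^{(2)}>\lceil M_n\rceil-2\gamma_n\mid Z_*^{(1)}=\lceil M_n\rceil),\]
reducing the lemma to bounding the second term by $o(1)$.

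Next, I would condition further on $\ell_1=\ell$ for some $\ell\in[k_*]$ and use that the variables $(Z_j)_{j\in[k_*]\setminus\{\ell\}}$ are then independent and each bounded by $\lceil M_n\rceil$. A union bound yields
\[\Prob(Z_*^{(2)}>\lceil M_n\rceil-2\gamma_n\mid Z_*^{(1)}=\lceil M_n\rceil,\,\ell_1=\ell)\le\sum_{j\in[k_*]\setminus\{\ell\}}\frac{\Prob(M_n-2\gamma_n<Z_j\le M_n)}{\Prob(Z_j\le M_n)}.\]
Chernoff's bound guarantees $\Prob(Z_j\le M_n)\ge 1/2$ uniformly in $j\in[k_*]$ (since $z_jp=(1+o(1))np$ is well below $M_n$), so the denominators are harmless. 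For each numerator, the Stirling-based estimate $\Prob(Z_j=k)\le (C/\sqrt{np})\exp(-(1+o(1))(k-z_jp)^2/(2np))$, valid uniformly for $k-z_jp=O(\sqrt{np\log n})$ (a range that includes the window $(M_n-2\gamma_n,M_n]$ since $\sqrt{np\log n}\ll(np)^{2/3}$ in our regime), combined with the fact that the binomial pmf is decreasing for $k>z_jp$, gives
\[\Prob(M_n-2\gamma_n<Z_j\le M_n)\le\frac{C(2\gamma_n+1)}{\sqrt{np}}\exp\Big(-(1+o(1))\frac{(M_n-2\gamma_n-z_jp)^2}{2np}\Big).\]

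Since $\gamma_n=(np)^{1/2-\eps_n}\ll\sqrt{np\log n}$ and $M_n-z_jp=O(\sqrt{np\log n})$ uniformly in $j\in[k_*]$, expanding the square gives $(M_n-2\gamma_n-z_jp)^2/(2np)=(M_n-z_jp)^2/(2np)-o(1)$ uniformly in $j$. The geometric-sum computation from the proof of Lemma~\ref{lem:max} (with $\zeta=1-\eps_n/2$) then evaluates $\sum_{j\in[k_*]}\exp(-(M_n-z_jp)^2/(2np))$ to $(np^3)^{-\eps_n/4+o(1)}$, so the whole sum is
\[O\big((np)^{-\eps_n}\big)\cdot(np^3)^{-\eps_n/4+o(1)},\]
and a short calculation using $np\ge n^{5/8+\eps}$ (so that writing $np=n^{\alpha}$ with $\alpha\ge 5/8+\eps$ yields an exponent $(2-7\alpha)\eps_n/4\le-19\eps_n/32$) bounds this by $n^{-(19/32)\eps_n+o(1)}$, which is $o(1)$ because $\eps_n\log n\to\infty$. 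Being uniform in $\ell$, this establishes the lemma.

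The main obstacle is the final summation. A Chernoff-only upper bound would give $\sum_{j\in[k_*]}\Prob(Z_j>M_n-2\gamma_n)=(np^3)^{-\eps_n/4+o(1)}$, which in fact \emph{diverges}. The essential saving comes from the concentration factor $(2\gamma_n+1)/\sqrt{np}=O((np)^{-\eps_n})$ supplied by the local-limit-type pmf estimate, which precisely balances that growth and yields a polynomial-in-$n$ saving exactly under the hypothesis $np\ge n^{5/8+\eps}$. One must also verify that substituting $(M_n-2\gamma_n-z_jp)$ for $(M_n-z_jp)$ in the Gaussian exponent produces only $o(1)$ error uniformly over $j\in[k_*]$; this is precisely the place where the quantitative choice $\gamma_n=(np)^{1/2-\eps_n}$ with $\eps_n\gg 1/\log(np)$ is crucial.
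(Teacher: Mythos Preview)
Your proof is correct and follows essentially the same approach as the paper: both reduce via Lemma~\ref{lem decreasing} to bounding $\sum_{j\in[k_*]}\Prob(Z_j\in(M_n-2\gamma_n,M_n])$ and then use a local-limit (Stirling) estimate on the binomial pmf to extract the crucial saving factor $\gamma_n/\sqrt{np}=(np)^{-\eps_n}$. The only cosmetic differences are that the paper keeps the product form, computes $2\gamma_n\Prob(Z_j=M_n)$ directly and simplifies the resulting sum to $O((np)^{-\eps_n/2})$ using $(n^2p^4)^{-\eps_n/4}\le 1$, whereas you pass to a union bound, reuse the geometric summation from the proof of Lemma~\ref{lem:max}, and finish with the equivalent quantity $(np)^{-\eps_n}(np^3)^{-\eps_n/4+o(1)}$.
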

\begin{proof}
As in the proof of Lemma~\ref{lem decreasing}, let $\ell_1 = \min\{\ell\in [k_*], \, Z^{(1)}_* = Z_\ell\}$ and note that, by Corollary~\ref{cor second max}, a.a.s.~$\ell_1=\min\{\ell\in [2k], \, Z^{(1)} = Z_\ell\}$.
Using Lemma~\ref{lem decreasing} and the definition of the sequence $(\eps_n)_{n\ge 1}$ (for the second inequality) and Corollary~\ref{cor second max} (for the equality) below, we get that 
\begin{align}
\Prob\big(&Z_*^{(2)} \le Z_*^{(1)} -2\gamma_n\big) \ge
\Prob(Z_*^{(1)}\ge M_n)\Prob(Z_*^{(2)}\le
Z_*^{(1)}-2\gamma_n\mid Z_*^{(1)} \ge M_n) \nonumber \\
& \ge\; (1-\Prob(Z_*^{(1)}\le M_n-1))\Prob(Z_*^{(2)}\le 
Z_*^{(1)}-2\gamma_n\mid Z_*^{(1)} = M_n)\nonumber\\
& =\;
(1-o(1))\sum_{\ell=1}^{k_*} \Prob(\ell_1 = \ell \mid Z_*^{(1)}=M_n) \prod_{j\in [k_*]\setminus \{\ell\}} \Prob(Z_j\le M_n-2\gamma_n\mid \ell_1 = \ell, Z_\ell = M_n) \nonumber \\
& \ge\; (1-o(1))\prod_{\ell=1}^{k_*} \Prob(Z_\ell\le M_n-2\gamma_n\mid Z_\ell \le M_n),\label{eq M_n optimal}
\end{align}
where for the last inequality we used that by independence of $(Z_\ell)_{\ell=1}^{k_*}$, for every $\ell\in [k_*]$, the product in the third line rewrites as
\begin{align*}
\prod_{j=1}^{\ell-1}\Prob(Z_j\le M_n-2\gamma_n\mid Z_j\le M_n-1) \prod_{j=\ell+1}^{k_*}\Prob(Z_j\le M_n-2\gamma_n\mid Z_j\le M_n).
\end{align*}
In particular, it is at least
\[\prod_{j\in [k_*]\setminus \{\ell\}} \Prob(Z_j\le M_n-2\gamma_n\mid Z_j \le M_n),\]
which is uniformly bounded from below by~\eqref{eq M_n optimal}. Moreover, using that $\Prob(Z_{\ell}\le M_n) = 1-o(1)$ for every $\ell\in [k_*]$, the product in~\eqref{eq M_n optimal} rewrites as
\begin{equation}\label{comp:terms1}
\begin{split}
\prod_{\ell=1}^{k_*} \big(1-\Prob\big(Z_\ell \in [M_n-2\gamma_n+1
&, M_n]\mid Z_\ell\le M_n\big)\big)\\
&=\prod_{\ell=1}^{k_*} \left( 1-(1+o(1))\Prob\left(Z_\ell \in [M_n - 2\gamma_n + 1, M_n]\right)\right).
\end{split}
\end{equation}
Let us show that for every $j$, the terms $(\Prob(Z_j=M_n-\ell))_{\ell=0}^{2\gamma_n}$ are all of the same order. In fact, we only show that the terms $\Prob(Z_j = M_n)$ and $\Prob(Z_j = M_n - 2\gamma_n)$ are of the same order, the computation for the remaining ones being analogous. Indeed, recalling that $q=1-p$, note that 
\begin{equation}\label{compareterms}
\frac{\Prob(Z_\ell=M_n - 2\gamma_n)}{\Prob(Z_\ell=M_n)}=\frac{M_n(M_n-1)\cdots(M_n-2\gamma_n+1)\onemp^{2\gamma_n}}{(z_\ell-M_n+1)\cdots(z_\ell-M_n+2\gamma_n)p^{2\gamma_n}},
\end{equation}
and also
$$M_n^{2\gamma_n}\Big(1-\frac{1}{M_n}\Big)\cdots\Big(1-\frac{2\gamma_n-1}{M_n}\Big)=M_n^{2\gamma_n}\exp\Big(-(1+o(1))\frac{4\gamma_n^2}{2M_n}\Big)=(1-o(1))M_n^{2\gamma_n}.$$
Furthermore, since $\gamma_np=o(1)$, $\onemp^{2\gamma_n} = \exp(-(1+o(1))2\gamma_np)=1-o(1)$. Therefore,~\eqref{compareterms} is equal to
\begin{align*}
(1-o(1))\frac{M_n^{2\gamma_n}}{p^{2\gamma_n}}\prod_{i=1}^{2\gamma_n} \frac{1}{z_\ell - M_n+i}
&= (1-o(1)) \prod_{i=1}^{2\gamma_n} \frac{np+\sqrt{(1-\tfrac{\varepsilon_n}{2})np\log(1/(np^3))}}{np-(1+o(1))\ell np^2} \\
&= \Big(1+\big(1+o(1)\big)\Big(\sqrt{\tfrac{1}{np}(1-\tfrac{\varepsilon_n}{2})\log(1/(np^3))}+\ell p\Big)\Big)^{2\gamma_n} \\
&= \exp\Big(\big(2\gamma_n+o(\gamma_n)\big)\Big(\sqrt{\tfrac{1}{np}(1-\tfrac{\varepsilon_n}{2})\log(1/(np^3))}+\ell p\Big)\Big) \\
&= \exp(O((np)^{-\eps_n/2})) \\
& = 1+o(1),
\end{align*}
where in the second-to-last equality we used that $\gamma_n\ell p\le \gamma_nk_*p=o(1)$.

Using this observation in~\eqref{comp:terms1} implies that
\begin{equation}\label{eq:compterms2}
\prod_{\ell=1}^{k_*} \left(1-(1+o(1))\Prob\left(Z_\ell \in [M_n- 2\gamma_n+1, M_n]\right)\right)=\prod_{\ell=1}^{k_*}(1-(1+o(1))2\gamma_n \Prob\big(Z_\ell=M_n\big)).
\end{equation}

\noindent
Finally, let us fix $\ell\in [k_*]$ and find the order of $2\gamma_n\Prob(Z_\ell = M_n)$. Recall that for $m=m(s)$ such that $1\ll m\ll s$ as $s\to\infty$, it holds that 
\[
\binom{s}{m}\sim \Big(\frac{se}{m}\Big)^m\frac{1}{\sqrt{2\pi m}}\exp\Big(-\frac{m^2}{2s}+O\Big(\frac{m^3}{s^2}\Big)\Big).
\]
Thus, since $M_n=(1+o(1))np$, we have 
\begin{align}
2\gamma_n\Prob(Z_\ell=M_n) 
& = 2\gamma_n\binom{z_\ell}{M_n}p^{M_n}q^{z_\ell-M_n}\nonumber\\ 
& = 2(np)^{1/2-\varepsilon_n}\Big(\frac{z_\ell pe}{M_n}\Big)^{M_n}\frac{1+o(1)}{\sqrt{2\pi np}}q^{z_\ell-M_n}\exp\Big(-\frac{M_n^2}{2z_\ell}
+O\Big(\frac{M_n^3}{z^2_\ell}\Big)\Big) \nonumber \\
& = (np)^{-\varepsilon_n}\Big(\frac{z_\ell p}{M_n}\Big)^{M_n}(1+o(1))q^{z_\ell-M_n}\sqrt{\frac{2}{\pi}}\exp\Big(M_n-\frac{M_n^2}{2z_\ell}
+O\Big(\frac{M_n^3}{z^2_\ell}\Big)\Big).
\label{eqn Z equal M}
\end{align}
Using that $1+x=\exp(x-\tfrac{x^2}{2}+O(x^3))$ as $x\to 0$ in order to bound from above  $\frac{z_\ell p}{M_n}=1+\frac{z_\ell p-M_n}{M_n}$,  we have
\begin{align}
\Big(\frac{z_{\ell}p}{M_n}\Big)^{M_n} & 
= \exp\Big(z_\ell p-M_n-\frac{(z_\ell p-M_n)^2}{2M_n}+O\Big(\frac{|z_\ell p-M_n|^3}{M_n^2}\Big)\Big), 
\label{eqn bnd binom} \\
q^{z_\ell-M_n} 
& = \exp\Big(-\Big(p+\frac{p^2}{2}+O(p^3)\Big)|z_\ell-M_n|\Big).
\label{eqn bnd q}
\end{align}
Using that $p^3=o(1/n)$, $z_\ell=(1+o(1))n$, $M_n=(1+o(1))np$,
and observing that both $p^3z_{\ell}$ and $p^2M_n$ are of order $O(np^3)=o(1)$, 
the exponent in the right-hand side of~\eqref{eqn bnd q} is 
\[
-\Big(p+\frac{p^2}{2}\Big)(z_\ell-M_n)+o(1)=-z_\ell p-\frac{(z_{\ell}p-M_n)^2}{2z_\ell}+\frac{M_n^2}{2z_\ell}+o(1).
\]
Hence, combining~\eqref{eqn Z equal M},~\eqref{eqn bnd binom} and~\eqref{eqn bnd q}, we obtain that $2\gamma_n\mathbb P(Z_\ell=M_n)$ is equal to
\begin{align*}
(np)^{-\eps_n}\exp\Big(-\frac12(z_\ell p-M_n)^2\Big(\frac{1}{z_\ell}+\frac{1}{M_n}\Big)+O\Big(\frac{M_n^3}{z_\ell^2}+\frac{|z_\ell p-M_n|^3}{M_n^2}\Big)+O(1)\Big).
\end{align*}
Next, observe that $\frac{M_n^3}{z_\ell^2}=(1+o(1))np^3=o(1)$ and

\begin{align*}
z_\ell p - M_n 
&=\; -(\ell-1)np^2+\tfrac12(\ell-1)(\ell-2)np^3-\sqrt{(1-\tfrac{\eps_n}{2})np\log(\tfrac{1}{np^3})}\\
&=\; -np^2\bigg(\ell - 1 + 2\sqrt{2(1-\tfrac{\eps_n}{2})}k_* + O(k_*^2 p)\bigg).
\end{align*}
Thus, $\tfrac{|z_\ell p- M_n|^3}{M_n^2} = O(k_*^3 n p^4) = o(1)$ and $\tfrac{(z_{\ell}p-M_n)^2}{z_\ell} = O(k_*^2 n p^4) = o(1)$.
Moreover, using that $M_n = (1-O(k_*p))np$ and $k_*^3p = O((\log(1/(np^3)))^{3/2}/\sqrt{n^3p^7}) = o(1)$, we get that
\[\frac{(z_{\ell}p - M_n)^2}{2M_n} \ge (1+O(k_*p))\bigg(4\left(1-\frac{\eps_n}{2}\right)k_*^2 + O(k_*^2 p)\bigg)np^3 = \frac{1}{2}\left(1-\frac{\eps_n}{2}\right)\log\left(\frac{1}{np^3}\right) + o(1).\]

Hence,
\begin{equation}\label{eq:=Mn}
\begin{split}
2\gamma_n\Prob(Z_j = M_n) 
&\le (np)^{-\eps_n}\exp\big(-\tfrac12(1-\tfrac{\eps_n}{2})\log(\tfrac{1}{np^3})+o(1)\big)\\
&= O((np)^{-\eps_n}(np^3)^{1/2 - \eps_n/4}) = O\Big(\frac{(np)^{-\eps_n/2}}{k_*}\Big),
\end{split}    
\end{equation}
where for the last equality we used that $(n^2p^4)^{-\eps_n/4}\le 1$ and $(np)^{-\eps_n/4}\ll \frac{1}{\sqrt{\log(1/(np^3))}}$.

\noindent

Using that $1-x = \exp(-(1+o(1)) x)$ as $x\to 0$, we conclude that~\eqref{eq:compterms2} rewrites as 
\begin{equation*}
    \exp\Big(-(1+o(1)) \sum_{\ell=1}^{k_*} 2\gamma_n\mathbb P(Z_\ell = M_n)\Big) = \exp(-O((np)^{-\eps_n/2})) = 1-o(1),
\end{equation*}
which finishes the proof of the lemma.
\end{proof}

Now, recall that by definition $\stwo^{(1)} = \max_{\ell\in [2k]} \szBsn{\ell}$, and define also 
\[
\stwo^{(2)} = \max_{\ell\in [2k]\setminus \{\ell_1\}} \szBsn{\ell},
\]
where $\ell_1 = \min\{\ell\in [k_*] \colon Z^{(1)}_* = Z_\ell\}$.
\begin{corollary}\label{cor fst snd max}
Under the coupling from Lemma~\ref{lem approx}, a.a.s.~$\szBsn{\ell_1} = \stwo^{(1)}$ and $\stwo^{(1)} - \stwo^{(2)}\ge \gamma_n$.
\end{corollary}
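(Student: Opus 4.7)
\medskip

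\noindent\textbf{Proof proposal.}
The plan is to combine Lemma~\ref{lem approx}, Corollary~\ref{cor second max}, and Lemma~\ref{lem fst snd max}, and observe that the additive error $(np)^{2/5}$ coming from the coupling is negligible compared to the gap $\gamma_n$ between the first and second maximum of $(Z_\ell)_{\ell=1}^{2k}$. All three of these ingredients hold a.a.s., so we may simply intersect them and proceed deterministically on the resulting event.

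\emph{Step 1: transfer the problem to $Z$'s.} By Corollary~\ref{cor second max}, a.a.s.~$Z^{(1)}=Z^{(1)}_*$ and $Z^{(2)}=Z^{(2)}_*$, so in particular $\ell_1=\min\{\ell\in[2k]:Z^{(1)}=Z_\ell\}$. Combined with Lemma~\ref{lem fst snd max}, we obtain that a.a.s.\ $Z_{\ell_1}-Z_j\ge 2\gamma_n$ for every $j\in[2k]\setminus\{\ell_1\}$.

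\emph{Step 2: compare the two scales.} Recalling that $\gamma_n=(np)^{1/2-\varepsilon_n}$ with $\varepsilon_n\to 0$, for every sufficiently large $n$ we have $\varepsilon_n<1/10$, and therefore $2(np)^{2/5}\le\gamma_n$. Hence the error incurred by replacing $Z_\ell$ with $\szBsn{\ell}$ is asymptotically much smaller than the gap between the two top values of the $Z_\ell$'s.

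\emph{Step 3: lift the gap to the basins.} By Lemma~\ref{lem approx}, a.a.s.\ $|Z_\ell-\szBsn{\ell}|\le(np)^{2/5}$ uniformly in $\ell\in[2k]$. Working on the intersection of the a.a.s.\ events from Steps~1 and~3, for each $j\in[2k]\setminus\{\ell_1\}$ we have
\[
\szBsn{\ell_1}-\szBsn{j}\ge(Z_{\ell_1}-Z_j)-2(np)^{2/5}\ge 2\gamma_n-\gamma_n=\gamma_n>0.
\]
In particular $\szBsn{\ell_1}$ exceeds every other basin size, so $\szBsn{\ell_1}=\stwo^{(1)}$, and by definition $\stwo^{(2)}=\max_{j\ne\ell_1}\szBsn{j}\le\szBsn{\ell_1}-\gamma_n=\stwo^{(1)}-\gamma_n$, as required.

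There is essentially no hard step here: the three preceding results were precisely tailored to make this deduction trivial. The only point to verify is the mild numerical inequality $(np)^{2/5}\ll\gamma_n$, which holds for all large $n$ because $\varepsilon_n\to 0$, so the $1/2-\varepsilon_n$ exponent in $\gamma_n$ eventually dominates $2/5$.
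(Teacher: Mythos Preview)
Your proof is correct and follows essentially the same approach as the paper: invoke Corollary~\ref{cor second max} and Lemma~\ref{lem fst snd max} to get a gap of $2\gamma_n$ between $Z_{\ell_1}$ and the other $Z_j$'s, then use the coupling of Lemma~\ref{lem approx} together with the elementary comparison $(np)^{2/5}\ll\gamma_n$ to transfer this gap to the basin sizes. Your presentation is in fact a bit cleaner than the paper's, which introduces an auxiliary index $\widehat\ell$ and a separate triangle-inequality estimate for $\stwo^{(2)}$; you simply bound $\szBsn{\ell_1}-\szBsn{j}$ uniformly over $j\neq\ell_1$ and read off both conclusions at once.
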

\begin{proof}
Recall that, under the coupling from Lemma~\ref{lem approx}, we have that a.a.s. 
\begin{equation}\label{eq cor fst snd max}
|\szBsn{\ell_1} - Z_{\ell_1}|\le (np)^{2/5}
\end{equation}
and, for $\widehat{\ell}\in [2k]\setminus\{\ell_1\}$ such that $|\stwo^{(2)}-Z_{\widehat{\ell}}|=\min_{\ell\in [2k]\setminus\{\ell_1\}}|\stwo^{(2)}-Z_{\ell}|$,
\begin{equation}\label{eq:ell hat}
|\stwo^{(2)}-Z_{\widehat{\ell}}|
\leq
(np)^{2/5}.
\end{equation}
By Corollary~\ref{cor second max} and Lemma~\ref{lem fst snd max} we know that a.a.s.\ $|Z^{(1)} - Z^{(2)}|\ge 2\gamma_n\gg (np)^{2/5}$, which together with~\eqref{eq cor fst snd max} and~\eqref{eq:ell hat} directly implies the first statement of the corollary. 
For the second statement, since by definition of $Z^{(1)}$ and $Z^{(2)}$ we have $Z^{(1)}-Z_{\widehat{\ell}}\geq Z^{(1)}-Z^{(2)} =|Z^{(1)}-Z^{(2)}|$, the triangle inequality implies that a.a.s.\
\begin{align*}
|\stwo^{(1)} - \stwo^{(2)}|\ge |Z^{(1)} - Z_{\widehat{\ell}}| - |\szBsn{\ell_1}-Z_{\ell_1}| - |Z_{\widehat{\ell}} - \stwo^{(2)}|\ge 2\gamma_n-2(np)^{2/5}\ge \gamma_n,
\end{align*}
which finishes the proof of the corollary.
\end{proof}

At this stage, we have the tools necessary to analyze the difference between the number of vertices in Level 3 that, at the second round of ALAP, receive the first- and second-most-represented label among the Level 3 vertices. 
Our next result bounds the difference in terms of the number of vertices in Level 3 possessing label
$\ell_1=\min\{\ell\in [2k] \colon Z^{(1)}_*=Z_\ell\}$ (recall that, by Corollary~\ref{cor second max}, a.a.s.~$\ell_1=\min\{\ell\in [k_*] \colon Z^{(1)}=Z_\ell\}$) and establishes that $\ell_1$ is in fact the most represented label among Level 3 vertices after the second round of ALAP.

\begin{lemma}\label{claim 3'}
There is a constant $c_2 > 0$ such that the event ``in Level $3$ there are $\sthree_2(\ell_1)\ge \tfrac{n}{2k}$ vertices with label~$\ell_1$ after the second round of ALAP, and moreover, the number of vertices with any label in $[k]\setminus \{\ell_1\}$ in Level $3$ after the second round is at least by $c_2 p^{1/2} (np)^{-\eps_n} \sthree_2(\ell_1)$ less than the number of vertices with label $\ell_1$'', that is, 
\[
\{\sthree_2(\ell_1)\ge \tfrac{n}{2k} \text{ and }
\forall\ell\in [k]\setminus\{\ell_1\}, \sthree_2(\ell)\le (1-c_2 p^{1/2}(np)^{-\eps_n}) \sthree_2(\ell_1)\}
\]
holds a.a.s.
\end{lemma}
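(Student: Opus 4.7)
The argument closely parallels that of Lemma~\ref{claim 3}, with $\ell_1$ playing the role of label $1$, but with the basin gap reduced from $\Omega(np^2)$ to merely $\gamma_n = (np)^{1/2-\eps_n}$ (provided by Corollary~\ref{cor fst snd max}). First, I would condition on Lemma~\ref{claim F}, on the a.a.s.\ event of Corollary~\ref{cor fst snd max}, and on all edges incident to Level~$1$; this determines $(\szBsn{\ell})_{\ell=1}^{2k}$ and the index $\ell_1$. Under this conditioning, $\sthree = (1-o(1))n$ and $\szBsn{\ell_1} - \szBsn{\ell} \ge \gamma_n$ for every $\ell \ne \ell_1$. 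Fix an arbitrary $u \in C$ and, for each $\ell \in [2k]$, let $p_\ell$ denote the probability that ALAP assigns label $\ell$ to $u$ at round~$2$; by symmetry $p_\ell$ depends only on the exposed information.

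For each $\ell \in [2k]\setminus\{\ell_1\}$, apply Lemma~\ref{lem:bis} with $X = |N(u)\cap B_1(\ell_1)|$, $Y = |N(u)\cap B_1(\ell)|$, $M$ the maximum of $|N(u)\cap B_1(i)|$ over $i \in [2k]\setminus\{\ell_1,\ell\}$, and $\rho$ the number of maximizing indices, obtaining
\[
\frac{p_{\ell_1}}{p_\ell} \ge \frac{\mathbb P(X > Y) + \tfrac12\mathbb P(X=Y)}{\mathbb P(Y > X) + \tfrac12\mathbb P(X=Y)}.
\]
Set $t = \szBsn{\ell_1} - \szBsn{\ell} \ge \gamma_n$ and $a_2 = \szBsn{\ell} = (1+o(1))np$. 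Remark~\ref{rem binomials'} applies when $tp = O(1)$ and Lemma~\ref{lem difference binomials} when $tp \to \infty$; in either regime, since $a_2p=(1+o(1))np^2\to\infty$,
\[
\mathbb P(X > Y) + \tfrac12 \mathbb P(X = Y) \ge \tfrac12 + \Omega\Big(\tfrac{tp}{\sqrt{a_2 p}}\Big) \ge \tfrac12 + \Omega\bigl(\sqrt{p}\, (np)^{-\eps_n}\bigr).
\]
Since $\sqrt{p}(np)^{-\eps_n} = o(1)$, there is a constant $c_1 > 0$ such that $p_\ell \le (1 - c_1 \sqrt{p}(np)^{-\eps_n}) p_{\ell_1}$ for every $\ell \ne \ell_1$. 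Combined with $\sum_\ell p_\ell = 1$, this forces $p_{\ell_1} \ge 1/(2k) + \Omega(\sqrt{p}(np)^{-\eps_n}/k)$.

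Conditionally on the exposed information, $(\sthree_2(\ell))_\ell$ is multinomial with parameters $\sthree$ and $(p_\ell)$, hence $\mathbb E \sthree_2(\ell_1) \ge \tfrac{n}{2k}\bigl(1 + \Omega(\sqrt{p}(np)^{-\eps_n})\bigr)$ and $\mathbb E[\sthree_2(\ell_1) - \sthree_2(\ell)] \ge c_1\sqrt{p}(np)^{-\eps_n}\,\mathbb E\sthree_2(\ell_1)$ for every $\ell \ne \ell_1$. A Chernoff bound applied to the binomials $\sthree_2(\ell_1)$ and $\sthree_2(\ell_1) - \sthree_2(\ell)$, followed by a union bound over $\ell \in [k]\setminus\{\ell_1\}$, yields both claims with $c_2 = c_1/2$. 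The quantitative requirement $p(np)^{-2\eps_n}\cdot p_{\ell_1}n \gg \log n$ reduces, via $p_{\ell_1} \ge 1/(2k)$ and $k = \Theta(p^{-2}\sqrt{\log n/n})$, to $n^{3/2}p^3(np)^{-2\eps_n} \gg (\log n)^{3/2}$, and this holds since $n^{3/2}p^3 = (np)^3/n^{3/2} \ge n^{3/8+3\eps}$ whenever $np \ge n^{5/8+\eps}$.

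\textbf{Main obstacle.} The principal difficulty is propagating the tiny multiplicative separation $\sqrt{p}(np)^{-\eps_n}$ between $p_{\ell_1}$ and the remaining $p_\ell$'s through concentration. This small separation is dictated by the basin gap $\gamma_n$, and requires appeal to Remark~\ref{rem binomials'} (rather than the stronger second conclusion of Lemma~\ref{lem difference binomials}) precisely in the regime where $(\szBsn{\ell_1}-\szBsn{\ell})p=O(1)$. The lower bound $np \ge n^{5/8+\eps}$ is exactly what guarantees that $p_{\ell_1}\cdot np \cdot (np)^{-2\eps_n}$ is polynomially larger than $\log n$, so that Chernoff's bound yields the claimed gap even after a union bound over $\Theta(k)$ labels.
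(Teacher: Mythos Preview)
Your approach is essentially the paper's: invoke Lemma~\ref{lem:bis} together with Remark~\ref{rem binomials'} to obtain $p_\ell \le (1-\Omega(\sqrt p\,(np)^{-\eps_n}))\,p_{\ell_1}$, then concentrate with Chernoff and a union bound. The paper organizes the concentration slightly differently---it conditions on the set $C_2(\{\ell_1,\ell\})$ and applies a single Chernoff bound to the binary split of that set---but the computation is the same, and your quantitative check that $p(np)^{-2\eps_n}\cdot n/k$ is polynomially large is exactly what is needed.

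One small gap concerns the first clause $\sthree_2(\ell_1)\ge n/(2k)$. Your margin $p_{\ell_1}-\tfrac1{2k}=\Omega(\sqrt p\,(np)^{-\eps_n}/k)$ must absorb the deficit $n-\sthree=\Theta(knp)$, so you implicitly need $\sqrt p\,(np)^{-\eps_n}\gg kp$, equivalently $k\sqrt p\,(np)^{\eps_n}=o(1)$. But $k\sqrt p=\Theta(p^{-3/2}(n^{-1}\log n)^{1/2})$, and at $np=n^{5/8+\eps}$ this is $n^{1/16-3\eps/2+o(1)}$, which diverges for every $\eps<1/24$---precisely the standing hypothesis of Section~\ref{sec 3.2}. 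The paper avoids this by asserting directly that $p_{\ell_1}\ge 1/k$ (from $\szBsn{\ell_1}$ being maximal), giving a factor-two cushion against $\sthree<n$. The discrepancy is cosmetic, since downstream only $\sthree_2(\ell_1)=\Omega(n/k)$ is ever used, but your stated inequality $\mathbb E\sthree_2(\ell_1)\ge \tfrac{n}{2k}(1+\Omega(\sqrt p(np)^{-\eps_n}))$ is not correct as written. Finally, $\sthree_2(\ell_1)-\sthree_2(\ell)$ is not itself binomial; read your ``Chernoff on $\sthree_2(\ell_1)-\sthree_2(\ell)$'' as Chernoff applied to each marginal separately.
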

\begin{proof}
 Let us condition on the a.a.s.\ event that $\stwo\le 3knp$ (see Lemma~\ref{claim F}). Since in ALAP, conditionally on the edges exposed at the first round, every vertex in Level 3 receives label $\ell_1$ independently and with probability at least $1/k$, it follows that $\sthree_2(\ell_1)$ dominates a binomial random variable with parameters $n-2k-3knp = (1-o(1)) n$ and $1/k$, so by Chernoff's bound
\begin{equation*}
    \Prob(\sthree_2(\ell_1)\le n/2k)
    \le \exp\Big(-\frac{(n/2k-\mathbb E \sthree_2(\ell_1))^2}{2\mathbb E \sthree_2(\ell_1)}\Big) = \exp(-\Omega(n/k)) = o(1).
\end{equation*}
Now, fix $\ell\in [k]\setminus \{\ell_1\}$. Then, by the above inequality, a.a.s.\ the number of vertices in Level 3 receiving a label among $\{\ell_1, \ell\}$ after the second round is at least $\sthree_2(\ell_1)\ge n/2k$. Let us condition on this event and on the set $\vthree_2(\{\ell_1,\ell\})$ of these vertices. 

Now, by Corollary~\ref{cor fst snd max} we know that a.a.s.\ $\stwo^{(1)} - \stwo^{(2)}\ge \gamma_n$, and moreover, combining Lemma~\ref{lem:max} and Corollary~\ref{cor fst snd max} implies that $\stwo^{(1)}\le M_n^+ = np + \sqrt{\frac32 np\log(1/(np^3))}$. Let us condition on these events as well. Then, for a vertex $u$ in the third level, Lemma~\ref{lem:bis} implies that
\begin{align*}
\frac{\mathbb P(u\in \vthree_2(\ell_1)\mid u\in \vthree_2(\{\ell_1,\ell\}))}{\mathbb P(u\in \vthree_2(\ell)\mid u\in \vthree_2(\{\ell_1,\ell\}))}
&\ge\\
&\hspace{-10em}\frac{\mathbb P(\mathrm{Bin}(\stwo_1(\ell_1), p) > \mathrm{Bin}(\stwo_1(\ell), p)) + \tfrac{1}{2}\mathbb P(\mathrm{Bin}(\stwo_1(\ell_1), p) = \mathrm{Bin}(\stwo_1(\ell), p))}{\mathbb P(\mathrm{Bin}(\stwo_1(\ell), p) > \mathrm{Bin}(\stwo_1(\ell_1), p)) + \tfrac{1}{2}\mathbb P(\mathrm{Bin}(\stwo_1(\ell), p) = \mathrm{Bin}(\stwo_1(\ell_1), p))}.    
\end{align*}
By replacing the denominators in the latter fraction with the sum of the corresponding numerator and denominator and using the inequality $\stwo^{(1)} - \stwo^{(2)}\ge \gamma_n$ we obtain that, in our procedure, the probability that a vertex in $\vthree_2(\{\ell_1,\ell\})$ gets label $\ell_1$, conditionally on obtaining either $\ell$ or $\ell_1$,
is bounded from below by 
\begin{equation*}
    \alpha_\ell := \Prob\big(\mathrm{Bin}(\stwo^{(1)}, p) > \mathrm{Bin}(\stwo^{(1)}-\gamma_n, p)\big) + \tfrac{1}{2} \Prob\big(\mathrm{Bin}(\stwo^{(1)}, p\big) = \mathrm{Bin}(\stwo^{(1)}-\gamma_n, p)).
\end{equation*} 

Then, Remark~\ref{rem binomials'} implies that $\alpha_\ell$ is bounded from below by $\frac{1}{2} + \Omega\Big(\frac{\gamma_n p}{\sqrt{\stwo^{(1)}p}}\Big) = \frac{1}{2} + \Omega\Big(\frac{p^{1/2}}{(np)^{\eps_n}}\Big)$. Hence, using that conditionally on $\sthree_2(\{\ell_1,\ell\})$ we have $\mathbb E \sthree_2(\ell_1) \geq \alpha_\ell \sthree_2(\{\ell_1,\ell\})$, by Chernoff's bound the number of vertices in $\vthree_2(\{\ell_1,\ell\})$ getting label $\ell_1$ in our procedure satisfies
\begin{equation*}
\Prob\Big(\sthree_2(\ell_1) \le \frac{1}{2}\Big(\frac{1}{2} + \alpha_\ell\Big)\sthree_2(\{\ell_1,\ell\})\Big)=\exp\Big(-\Omega\Big(\Big(\alpha_\ell - \frac{1}{2}\Big)^2\sthree_2(\{\ell_1,\ell\})\Big)\Big) = o(1/n).
\end{equation*}

Hence, by a union bound we conclude that a.a.s.\ for every $\ell\in [k]\setminus \{\ell_1\}$, the difference between the number of vertices with label $\ell_1$ and $\ell$ in Level 3 after the second round is at least $(\alpha_\ell - 1/2)\sthree_2(\{\ell_1,\ell\}) = \Omega(p^{1/2} (np)^{-\eps_n} \sthree_2(\ell_1))$.
\end{proof}

It remains to analyze the effect of the second round of ALAP over the vertices in Level~2.
For every $\ell\in [k]\setminus \ell_1$, denote by $\bar{\vtwo}_{\ell_1,\ell}$ the vertices in $B$ outside the basins $B_1(\ell_1), B_1(\ell)$ as well as the basins $B_1(i)$ where at least one of $v_iv_{\ell_1}$ and $v_iv_{\ell}$ is an edge in~$G_n$.

\begin{lemma}\label{lem level 2}
There exists a constant $c_3 > 0$ such that a.a.s.~the following holds: for every $\ell\in [k]\setminus \{\ell_1\}$,
the number of vertices with label 
$\ell_1$ after the second round in $\bar{\vtwo}_{\ell_1,\ell}$ is at least by 
$c_3\gamma_n n^{-1/2} |B_2(\ell_1)\cap \bar{\vtwo}_{\ell_1,\ell}|\ge c_3\gamma_n p\sqrt{n}/2$ larger than the number of vertices with label~$\ell$ after the second round in $\bar{\vtwo}_{\ell_1,\ell}$ of ALAP.
\end{lemma}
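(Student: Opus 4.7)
My plan is to follow the blueprint of the proof of Lemma~\ref{claim 4}, with the main modification that Remark~\ref{rem binomials'} replaces Lemma~\ref{lem difference binomials} (necessary because $\gamma_n p=O(1)$ in this regime, rather than diverging).

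First, I condition on the a.a.s.\ events provided by Lemma~\ref{claim F} (so $\stwo=\Theta(knp)$), Observation~\ref{ob 3.5} (every Level-$1\cup 2$ vertex has at most $10$ neighbors in Level~$1$), Corollary~\ref{cor fst snd max} (which yields $\szBsn{\ell_1}-\szBsn{\ell}\ge\gamma_n$ for every $\ell\ne\ell_1$), and the coupling of Lemma~\ref{lem approx}. Observation~\ref{ob 3.5} implies that $\bar{\vtwo}_{\ell_1,\ell}$ excludes at most $22$ basins from $\vtwo$, so $|\bar{\vtwo}_{\ell_1,\ell}|=(1-o(1))\stwo=\Theta(knp)$. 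Fix $\ell\in[k]\setminus\{\ell_1\}$ and a vertex $u\in\bar{\vtwo}_{\ell_1,\ell}$ lying in $\basin(i)$ with $v_iv_{\ell_1},v_iv_{\ell}\notin G_n$. Define the effective basin sets $\widehat B_1(\ell_1)$ and $\widehat B_1(\ell)$ as in the proof of Lemma~\ref{claim 4}; by Observation~\ref{ob 3.5} their sizes $\widehat{\mathfrak B}_1(\ell_1)$ and $\widehat{\mathfrak B}_1(\ell)$ differ from $\szBsn{\ell_1}$ and $\szBsn{\ell}$ by at most $10$, so $\widehat{\mathfrak B}_1(\ell_1)-\widehat{\mathfrak B}_1(\ell)\ge\gamma_n-20=(1-o(1))\gamma_n$.

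Applying Lemma~\ref{lem:bis} to compare the probabilities $\widehat p_{\ell_1}'(u)$ and $\widehat p_\ell'(u)$ that $u$ gets label $\ell_1$ or $\ell$ at round~2 (with $M,\rho$ encoding the best competing label outside $\{\ell_1,\ell\}$), and then invoking Remark~\ref{rem binomials'} in place of Lemma~\ref{lem difference binomials}, I obtain the per-vertex bound
\[
\widehat p_{\ell_1}'(u)-\widehat p_\ell'(u)\ge c\cdot\frac{\gamma_n p}{\sqrt{(\widehat{\mathfrak B}_1(\ell_1)+\widehat{\mathfrak B}_1(\ell))p}}\bigl(\widehat p_{\ell_1}'(u)+\widehat p_\ell'(u)\bigr)=\Omega\!\left(\tfrac{\gamma_n}{\sqrt n}\right)\bigl(\widehat p_{\ell_1}'(u)+\widehat p_\ell'(u)\bigr)
\]
for some absolute constant $c>0$. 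Summing over $u\in\bar{\vtwo}_{\ell_1,\ell}$, and using that each such $u$ receives label $\ell_1$ with probability at least $\tfrac{1}{2k}(1-o(1))$, one has $\mathbb E|B_2(\ell_1)\cap\bar{\vtwo}_{\ell_1,\ell}|=\Omega(np)$, and the expected excess of label-$\ell_1$ over label-$\ell$ vertices in $\bar{\vtwo}_{\ell_1,\ell}$ is $\Omega(\gamma_n n^{-1/2}\,\mathbb E|B_2(\ell_1)\cap\bar{\vtwo}_{\ell_1,\ell}|)=\Omega(\gamma_n p\sqrt n)$, matching the bound in the statement.

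The concentration step follows the second-moment decomposition of Step~2 in the proof of Lemma~\ref{claim 4}. The diagonal contribution is $O(\mathbb E|B_2(\ell_1)\cap\bar{\vtwo}_{\ell_1,\ell}|)=O(np)$, negligible against the squared expectation $\Theta(\gamma_n^2 p^2 n)$ since $\gamma_n^2 p=\omega(1)$ in this regime. Each off-diagonal covariance is handled by conditioning on $uv\in G_n$ versus $uv\notin G_n$, yielding the product form~\eqref{eq rewrite}. The main technical obstacle is the analogue of~\eqref{eq w}: showing that flipping the single edge $uv$ changes each conditional expectation by only $o(\widehat p_{\ell_1}'(w)-\widehat p_\ell'(w))$. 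Since $\gamma_n p=O(1)$ precludes a direct appeal to Remark~\ref{rem binomials''}, my plan is to split pairs $(u,v)$ according to whether $v\in\widehat B_1(\ell_1)\cup\widehat B_1(\ell)$ (where the per-edge shift is of order $1/(kp\sqrt n)$) or $v$ lies in another basin (where the shifts of $\widehat p_{\ell_1}'$ and $\widehat p_\ell'$ largely cancel to higher order); the crucial arithmetic $k\gamma_n^2 p^2=\omega(1)$, valid for $np\ge n^{5/8+\eps}$, then makes the total off-diagonal contribution $o(\text{mean}^2)$. Chebyshev's inequality together with a union bound over the $k-1$ choices of $\ell$ (absorbed because the per-$\ell$ failure probability is $o(p)$ and $kp=o(1)$) then completes the proof.
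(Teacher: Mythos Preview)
Your per-vertex comparison via Lemma~\ref{lem:bis} and Remark~\ref{rem binomials'} is correct and matches the paper, as does the order of magnitude of the expected excess. The concentration step, however, is where your sketch diverges from the paper and where it has a real gap.

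The paper does \emph{not} run a second-moment argument here. Instead it exploits a structural feature of $\bar{B}_{\ell_1,\ell}$: since every $u\in\bar{B}_{\ell_1,\ell}$ lies in some $B_1(i)$ with $i\notin\{\ell_1,\ell\}$, the edges from $u$ to $B_1(\ell_1)\cup B_1(\ell)$ are disjoint from the edges from any other $u'\in\bar{B}_{\ell_1,\ell}$ to $B_1(\ell_1)\cup B_1(\ell)$. One may therefore first expose all other edges (fixing, for each $u$, the competing maximum $M(u)$ and its multiplicity $\rho(u)$), and then condition on the set $S=B_2(\{\ell_1,\ell\})\cap\bar{B}_{\ell_1,\ell}$. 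After this conditioning, the $\ell_1$-versus-$\ell$ choices for distinct $u\in S$ are \emph{independent} Bernoullis, each with success probability at least $\tfrac12+2c_3\gamma_n n^{-1/2}$ by Lemma~\ref{lem:bis} and Remark~\ref{rem binomials'}. A simple $\mathrm{Bin}(|B_1([k+1,2k])|,1/k)$ domination gives $|S|\ge np/2$ a.a.s., and Chernoff then yields failure probability $\exp(-\Omega(\gamma_n^2 p))=o(1/n)$, more than enough for the union bound over $\ell\in[k]$.

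Your second-moment sketch, by contrast, contains a concrete confusion. The covariance sum is over pairs $u,v\in\bar{B}_{\ell_1,\ell}$, and by definition of $\bar{B}_{\ell_1,\ell}$ no such $v$ lies in $B_1(\ell_1)\cup B_1(\ell)$ (nor in $\widehat B_1(\ell_1)\cup\widehat B_1(\ell)$, which differ from these only by a handful of vertices in $A$). Your first case is therefore empty. The only effect of flipping the edge $uv$ is on $u$'s count toward some label $j\notin\{\ell_1,\ell\}$, i.e.\ on $M(u),\rho(u)$; your sketch does not analyse how this perturbs $\widehat p_{\ell_1}'(u)-\widehat p_\ell'(u)$, and the heuristic ``the shifts of $\widehat p_{\ell_1}'$ and $\widehat p_\ell'$ largely cancel to higher order'' is exactly what would need to be proved. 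Finally, even if you obtained $\mathbb V=o(\text{mean}^2)$, Chebyshev would give only $o(1)$ failure per $\ell$; to survive the union bound over $k$ values you need $\mathbb V=o(p\cdot\text{mean}^2)$, which your outline does not address. The paper's conditional-independence route sidesteps all of these issues.
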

\begin{proof}
We reveal all edges with two endvertices in $A$ and condition on the following a.a.s.\ events. The first of them is the event that, for every label in $\ell\in [2k]$, $\sone_1(\ell)\le 10$ (this holds a.a.s.\ by Observation~\ref{ob 3.5}).
Moreover, we condition on the event that, for every $\ell\in [2k]$, $2np\ge \stwo_1(\ell)$, and that $\stwo_1([k+1,2k])\ge 2knp/3$. Both of these events are a.a.s.\ by Chernoff's bound (applied as in Lemma~\ref{claim F}), and a union bound over all $2k$ vertices in $A$ in the first case. Finally, we also condition on the event of Corollary~\ref{cor fst snd max}. Note that all three events are measurable in terms of the edges between two vertices in $A$ and the ones between $v_{\ell}$ and $B_1(\ell)$ for all $\ell\in [2k]$.

For every vertex $v\in B_1([k+1,2k])$, note that the indicator variable of the event~$v\in B_2(\ell_1)$ stochastically dominates a Bernoulli random variable with success probability $1/k$ since $v$ is not connected by an edge to any of $(v_j)_{j=1}^{k}$ and $\vtwo_1(\ell_1)$ is larger than all other basins by definition. Note also that all vertices in $\vtwo_1([k+1,2k])$ are attributed label $\ell_1$ independently of each other. Thus, the number $W$ of such vertices stochastically dominates a binomial random variable $\mathrm{Bin}(2knp/3,1/k)$, and by Chernoff's bound $\Prob (W \le 3np/5) \le e^{-\Omega(np)}$. We conclude that a.a.s., our procedure attributes label $\ell_1$ to at least $3np/5$ vertices in Level 2, that is, $\stwo_2(\ell_1)\ge 3np/5$.

Now, we show that a.a.s.~for every $\ell\in [k]\setminus \{\ell_1\}$, among the vertices in $\vtwo_2(\{\ell_1,\ell\})\cap \bar{\vtwo}_{\ell_1,\ell}$ there are more vertices with label $\ell_1$ than with label $\ell$. Fix $\ell\in [k]\setminus \{\ell_1\}$. First, by the preceding paragraph and the fact that no vertex in $A$ has more than 10 neighbors in $A$, a.a.s.,
\[|\vtwo_2(\{\ell_1,\ell\})\cap \bar{\vtwo}_{\ell_1,\ell}|\ge (1-o(1))|B_1([k+1,2k])\cap \vtwo_2(\ell_1)|\ge np/2.
\]
We condition on the set $\vtwo_2(\{\ell_1,\ell\})\cap\bar{\vtwo}_{\ell_1,\ell}$ and on the event $|\vtwo_2(\{\ell_1,\ell\})\cap\bar{\vtwo}_{\ell_1,\ell}|\ge np/2$.
Given a vertex $v\in \vtwo_2(\{\ell_1,\ell\})\cap\bar{\vtwo}_{\ell_1,\ell}$,
recall that each of its edges towards the vertices with label $\ell_1$ and $\ell$ on the first two levels remains unexposed, and the numbers $a_1$ and~$a_2$ of these vertices are among $\mathfrak{B}_1(\ell_1)\pm 11$ and $\mathfrak{B}_1(\ell)\pm 11$, respectively, where, thanks to the conditioning in the beginning of the proof, 11 is an upper bound on the number of vertices on Level 1 with any given label. 
Hence, applying Remark~\ref{rem binomials'} for $a_1$, $a_2$,
$X_1\in \mathrm{Bin}(a_1,p)$ and $X_2\in \mathrm{Bin}(a_2,p)$, we get that
\begin{align*}
\Prob(v\in\vtwo_2(\ell_1) \mid v\in \vtwo_2(\{\ell_1,\ell\})\cap\bar{\vtwo}_{\ell_1,\ell}) 
&\ge \Prob(X_1 > X_2) + \frac{1}{2} \Prob(X_1 = X_2)\\
&= \frac{1}{2} + \Omega\left(\frac{\gamma_n p}{\sqrt{a_1p}}\right) \ge \frac{1}{2}+2c_3\gamma_n n^{-1/2},
\end{align*}
where $c_3 > 0$ is a sufficiently small absolute constant.

We conclude that the number of vertices in $\vtwo_2(\{\ell_1,\ell\})\cap \bar{\vtwo}_{\ell_1,\ell}$ receiving label $\ell_1$ by our procedure dominates the sum of $|\vtwo_2(\{\ell_1,\ell\})\cap \bar{\vtwo}_{\ell_1,\ell}|\ge np/2$ Bernoulli random variables with success probability $\frac{1}{2}+2c_3\gamma_nn^{-1/2}$. 
Recalling that we are working conditionally on the set $\vtwo_2(\{\ell_1,\ell\})\cap \bar{B}_{\ell_1,\ell}$ and the event $|\vtwo_2(\{\ell_1,\ell\})\cap \bar{\vtwo}_{\ell_1,\ell}|\geq np/2$, and that $\gamma_n = (np)^{1/2-o(1)}$, Chernoff's bound implies that 
\begin{align*}
&\Prob\big(|\vtwo_2(\ell_1)\cap\bar{\vtwo}_{\ell_1,\ell}|\le |\vtwo_2(\ell)\cap\bar{\vtwo}_{\ell_1,\ell}|
+2c_3\gamma_n n^{-1/2}|\vtwo_2(\{\ell_1,\ell\}\cap\bar{\vtwo}_{\ell_1,\ell}|\big)
\\
&\quad=\Prob\big(|\vtwo_2(\ell_1)\cap\bar{\vtwo}_{\ell_1,\ell}|\le  (\tfrac12+c_3\gamma_n n^{-1/2})|\vtwo_2(\{\ell_1,\ell\})\cap\bar{\vtwo}_{\ell_1,\ell}|\big)
\\
&\quad\leq\Prob\big(\mathrm{Bin}(|\vtwo_2(\{\ell_1,\ell\})\cap \bar{\vtwo}_{\ell_1,\ell}|, \tfrac{1}{2}+2c_3\gamma_n n^{-1/2})\le (\tfrac{1}{2}+c_3\gamma_n n^{-1/2})|\vtwo_2(\{\ell_1,\ell\})\cap \bar{\vtwo}_{\ell_1,\ell}|\big)\\
&\quad\leq\exp\Big(-\frac{(c_3\gamma_n n^{-1/2} |\vtwo_2(\{\ell_1,\ell\})\cap \bar{\vtwo}_{\ell_1,\ell}|)^2}{3|\vtwo_2(\{\ell_1,\ell\})\cap \bar{\vtwo}_{\ell_1,\ell}|}\Big)
\\
& \quad \le \exp(-(np)^{1-o(1)} p) = o(1/n).
\end{align*}

\noindent
The statement follows by taking a union bound over all $\ell\in [k]\setminus \{\ell_1\}$.
\end{proof}

It remains to analyze the number of vertices in $B\setminus \bar{B}_{\ell_1, \ell}$ which obtain a label in~$\{\ell_1, \ell\}$ at the second round of ALAP. 
In fact, we will concentrate our effort on showing that the vertices in $B\setminus \bar{B}_{\ell_1, \ell}$ getting label $\ell$ at the second round is a.a.s.\ smaller than the difference ensured by Lemma~\ref{claim 3'}.

\begin{lemma}\label{lem snd level 2}  
A.a.s.\ for every $\ell\in [2k]\setminus \{\ell_1\}$, there are at most $200 n^{3/2}p^{5/2+\eps/100}$ vertices with label $\ell$ in $B\setminus \bar{B}_{\ell_1, \ell}$ after the second round of ALAP.
\end{lemma}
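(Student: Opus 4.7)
The plan is to control the number $X_\ell$ of vertices in $B\setminus\bar B_{\ell_1,\ell}$ carrying label $\ell$ after round 2 of ALAP via a second-moment argument. I first condition on the a.a.s.\ events established in the preceding pages: the approximation of basin sizes from Lemma~\ref{lem approx} and Corollary~\ref{cor fst snd max}, the upper bound $\stwo^{(1)}\le np+O(\sqrt{np\log(1/(np^3))})$ from Lemma~\ref{lem:max}, Observation~\ref{ob 3.5}, and the Chernoff bound $\szBsn{j}\le 2np$ uniformly in $j\in[2k]$. By Observation~\ref{ob 3.5}, there are at most $20$ indices $i\notin\{\ell,\ell_1\}$ with $v_i\sim v_\ell$ or $v_i\sim v_{\ell_1}$, so $B\setminus\bar B_{\ell_1,\ell}$ lies in the union of at most $22$ basins, whence $|B\setminus\bar B_{\ell_1,\ell}|\le 44np$.

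For $v\in B\setminus\bar B_{\ell_1,\ell}$, the random variables $(|N(v)\cap B_1(j)|)_{j\in[2k]}$ are independent binomials with parameters $(|B_1(j)|-\mathbf{1}[v\in B_1(j)],p)$, none of which were exposed during round 1 of ALAP. Hence the probability that $v$ obtains label $\ell$ at round~2 is at most $\Prob(\ell\in\mathrm{argmax}_{j\in[2k]} |N(v)\cap B_1(j)|)$; a symmetry computation analogous to Lemma~\ref{lem:max} bounds this probability by $O(1/k)$, so $\mathbb{E}[X_\ell]=O(np/k)$.

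Since a union bound over $2k=O(n^{1/4-2\eps}\sqrt{\log n})$ choices of $\ell$ combined with Markov's inequality is not sharp enough, I turn to Chebyshev. Following the decoupling technique of Lemma~\ref{claim 4}, for distinct $u,v\in B\setminus\bar B_{\ell_1,\ell}$, conditioning on whether $uv\in G_n$ gives $|\mathrm{Cov}(\mathbf{1}[u\text{ gets }\ell],\mathbf{1}[v\text{ gets }\ell])|=p(1-p)|\Delta_u\Delta_v|$, where $|\Delta_w|$ is bounded by the probability that the $\mathrm{argmax}$ of $(|N(w)\cap B_1(j)|)_j$ shifts when a single edge at $w$ is flipped. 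A local limit estimate for binomials yields $|\Delta_w|=O(1/(p\sqrt{n}))$, so $\mathrm{Var}(X_\ell)=O(\mathbb{E}[X_\ell]+|B\setminus\bar B_{\ell_1,\ell}|^2/(pn))=O(np)$. Chebyshev's inequality then gives $\Prob(X_\ell\ge 200 n^{3/2}p^{5/2+\eps/100})=O(n^{-1/2-4\eps})$, and a union bound over $\ell\in[2k]\setminus\{\ell_1\}$ yields a total failure probability of $O(\sqrt{\log n}\,n^{-1/4-6\eps})=o(1)$.

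The main obstacle is establishing the anti-concentration estimate $|\Delta_w|=O(1/(p\sqrt{n}))$ rigorously. This amounts to a local limit theorem for $\mathrm{Bin}(|B_1(j)|,p)$ at the level of the $\mathrm{argmax}$ of $2k$ independent copies, i.e.\ roughly $\sqrt{\log(2k)}$ standard deviations above the mean, where the peak density of the binomial is of order $1/\sqrt{np^2}=1/(p\sqrt{n})$ -- the same quantity that underlies the Berry--Esseen type bounds used in Lemma~\ref{lem difference binomials} and in the covariance computation of Lemma~\ref{claim 4}.
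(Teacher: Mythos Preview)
Your overall strategy---second-moment method with the edge-decoupling trick from Lemma~\ref{claim 4}, Chebyshev, then union bound over $\ell$---is exactly the paper's. The genuine gap is the mean bound. Your claim $\Prob(v\text{ gets label }\ell)=O(1/k)$ ``by a symmetry computation analogous to Lemma~\ref{lem:max}'' is false: the random variables $|N(v)\cap B_1(j)|$ are \emph{not} exchangeable, and the drift $np^3$ in their means, though $o(1)$ per step, accumulates over $[2k]$ so that the argmax is concentrated on far fewer than $2k$ indices. A back-of-envelope self-consistency calculation (as in Lemma~\ref{lem:max}) shows the argmax lives on roughly $1/\sqrt{np^4}$ indices, and $1/\sqrt{np^4}=k/\Theta(\sqrt{\log n})\ll k$; hence the probability for small $\ell$ is $\gg 1/k$.

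What the paper does instead is introduce $m:=\lceil n^{-1/2}p^{-3/2-\eps/100}\rceil\ll k$---note that $200\,n^{3/2}p^{5/2+\eps/100}$ is precisely $200\,np/m$, so this is where the threshold in the statement comes from---and spends the bulk of the proof on a careful local-limit computation at the level of $\max_{j\in[m]} X_j$ to show $\Prob(X_\ell=\max_{j\in[m]}X_j)=(1+o(1))/m$ uniformly in $\ell\in[m]$, giving $\mathbb E[Y_\ell]\le 100\,np/m$. The same computation yields the separation estimate (their~\eqref{eq:max2}) $\Prob(\exists j\neq i:\,X_j\ge M-C\mid X_i=M=\max)\le O(m/\sqrt{np^2})=o(1)$, which is what makes the $|\Delta_w|=o(\Prob)$ step go through cleanly. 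So the piece you flag as ``the main obstacle'' (the local-limit bound on $|\Delta_w|$) is actually a consequence of the mean computation; the mean computation is the missing idea. Once you have $\mathbb E[Y_\ell]\le 100\,np/m$, your variance bound $O(np)$ and Chebyshev numerics are fine.
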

\begin{proof}
Fix $m:=\lceil n^{-1/2}p^{-3/2-\eps/100} \rceil$ and observe that $m \ll k$.
By Chernoff's bound together with a union bound, a.a.s.\ $\szBsn{\ell} = np-O(\sqrt{np}\log m)$ for all $\ell\in [m]$ (with uniform constant in the $O(\cdot)$) and, moreover, a.a.s.\ every vertex in $B$ has up to 10 neighbors in $A$ by Observation~\ref{ob 3.5}.
Moreover, conditionally on the latter a.a.s.\ event, another Chernoff bound implies that the (up to 22) basins of vertices $v_i$ which are incident to $\{v_{\ell},v_{\ell_1}\}$ in $A$ have total size up to $22\cdot 3np$;
we condition on the above three events.

Now, we fix a vertex $v \in B\setminus \bar{B}_{\ell_1, \ell}$ for some $\ell \le m$, and bound from above the probability of the event $\{ v\in B_2(\ell)\}$.
To do so, we expose the edges from $v$ to $B_1(\ell)$; note that $v$ has up to 10 neighbors in $A$ but we ignore these for the moment.
Observe that there is a family of independent random variables $(X_i)_{i=1}^m$ with distributions respectively $(\mathrm{Bin}(\szBsn{i}-\mathds{1}_{v\in B_1(i)}, p))_{i=1}^m$ which are stochastically dominated by the number of edges from $v$ to vertices with label $i$ not yet exposed after the first round.
Our first step is to find the expected value of the maximum $M$ of $(X_i)_{i=1}^m$ as well as its fluctuations; 
while this index $i$ of the maximal $X_i$ does not necessarily correspond to the label obtained by $v$
after the second round, it will be sufficient to compare the probability that a label $\ell\le m$ is attributed to $v$ for different values of $\ell$.
For $c\in \mathbb R$, fix $x_c = x_c(n) := np^2+\sqrt{2np^2 \log (m/\sqrt{\log m})}+c\sqrt{2np^2/\log(m/\sqrt{\log m})}$. Then, 
\begin{equation}\label{eq:x_c}
\mathbb P(X_i\ge x_c) = \sum_{j=x_c}^{\mathfrak{B}_1(i)} \binom{\mathfrak{B}_1(i)}{j} p^j (1-p)^{\mathfrak{B}_1(i)-j}.
\end{equation}
Instead of directly computing the above sum, we will compare consecutive terms first. Given $j$, we have
\begin{equation}\label{eq:ratio_new}
\binom{\mathfrak{B}_1(i)}{j+1} p^{j+1} (1-p)^{\mathfrak{B}_1(i)-j-1}\bigg/\binom{\mathfrak{B}_1(i)}{j} p^j (1-p)^{\mathfrak{B}_1(i)-j} = \bigg(\frac{\mathfrak{B}_1(i)+1}{j+1}-1\bigg)\frac{p}{1-p}.
\end{equation}
Moreover, one can easily check that
\[\mathfrak{B}_1(i)p = np^2 + O(\sqrt{np^3}\log n) = np^2+o(\sqrt{np^2/\log m})\]
and $p = o(1/\sqrt{np^2\log m})$, which implies that~\eqref{eq:ratio_new} is equal to
\begin{align*}
\frac{np^2}{j+1}+o\bigg(\frac{1}{\sqrt{np^2\log m}}\bigg) = 1+\frac{np^2-j-1}{j+1}+o\bigg(\frac{1}{\sqrt{np^2\log m}}\bigg).   
\end{align*}
In particular, for any $d > c$, we have that 
\begin{align*}
\prod_{j=x_c}^{x_d} \bigg(1+\frac{np^2-j-1}{j+1}+o\bigg(\frac{1}{\sqrt{np^2\log m}}\bigg)\bigg) 
&= \exp\bigg((1+o(1)) \sum_{j=x_c}^{x_d} \frac{np^2-j-1}{j+1}\bigg)\\
&= \exp\bigg((1+o(1)) \sum_{j=x_c}^{x_d} \frac{np^2-j-1}{np^2}\bigg)\\
&= \exp\bigg((1+o(1)) \frac{(np^2-x_c)^2-(np^2-x_d)^2}{2np^2}\bigg)\\ 
&= \mathrm{e}^{2(c-d)+o(1)}.
\end{align*}
As a result, using a classical approximation of the sum~\eqref{eq:x_c} by an integral and the fact that $px_c=o(1)$, we obtain that 
\begin{align}
\mathbb P(X_i\ge x_c)
&= (1+o(1)) \binom{\mathfrak{B}_1(i)}{x_c} p^{x_c} (1-p)^{\mathfrak{B}_1(i)-x_c} \cdot \sqrt{\frac{2np^2}{\log(m/\sqrt{\log m})}} \int_{0}^{\infty} \mathrm{e}^{-2u} \mathrm{d}u\nonumber\\
&= (1+o(1)) \bigg(\frac{\mathrm{e}\mathfrak{B}_1(i)p}{x_c}\bigg)^{x_c} \exp\bigg(-\frac{x_c^2}{2\mathfrak{B}_1(i)}\bigg) \frac{1}{\sqrt{2\pi x_c}} \mathrm{e}^{-p\mathfrak{B}_1(i)} \cdot \sqrt{\frac{np^2}{2\log m}}\nonumber\\
&= (1+o(1)) \bigg(\frac{\mathrm{e}np^2}{x_c}\bigg)^{x_c} \mathrm{e}^{-np^2} \cdot \sqrt{\frac{1}{4\pi\log m}}\nonumber\\
&= (1+o(1)) \bigg(1+\frac{np^2-x_c}{x_c}\bigg)^{x_c} \mathrm{e}^{x_c-np^2}\cdot \sqrt{\frac{1}{4\pi\log m}}.\label{eq:taylor}
\end{align}
Finally, Taylor expansion of the first term in~\eqref{eq:taylor} shows that 
\begin{align*}
\bigg(1+\frac{np^2-x_c}{x_c}\bigg)^{x_c} 
&= \mathrm{e}^{np^2-x_c - (x_c-np^2)^2/2x_c + O((x_c-np^2)^3/x_c^2)}\\
&= (1+o(1))\mathrm{e}^{np^2-x_c}\cdot \mathrm{e}^{-2c}\sqrt{\log m}/m,
\end{align*}
which means that~\eqref{eq:x_c} equals $(1+o(1)) \mathrm{e}^{-2c}/(\sqrt{4\pi} m)$, and the leading order of this expression does not depend on $i\in [m]$.

Moreover, for every $i\in [m]$ and every integer value, $X_i$ takes this value with probability $O(1/\sqrt{np^2}) = o(1/m)$.
As a result, a.a.s.\ a single variable among $X_1,\ldots,X_m$ attains the maximum and, for any constant $C\in \N$,
\begin{equation}\label{eq:max2}
\mathbb P\bigg(\exists j\in [m]\setminus \{i\}, X_j\ge M-C\,\bigg|\, \max_{j\in [m]} X_j = X_i = M\bigg) = O\bigg(\frac{m}{\sqrt{np^2}}\bigg) = o(1).
\end{equation}
We conclude that, first, the edges between $v$ and $A$ a.a.s.\ do not influence the label that $v$ obtains at the second round of ALAP, and second, 
each of the first $m$ labels is attributed to $v$ with the same probability, up to a factor of $1+o(1)$: 
indeed, on the one hand, for every $\eps > 0$, there is a $c = c(\eps) > 0$ such that $M = \max_{i\in [m]} X_i\in [x_{-c}, x_c]$ with probability at least $1-\eps$  and, on the other hand, conditionally on this event, for every $i,j \in [m]$, 
\[\mathbb P (X_i = M) = (1+o(1))\mathbb P (X_j = M) = (1+o(1))/m.\]

Denote by $Y_{\ell}$ the number of vertices in $B\setminus \bar{B}_{\ell_1, \ell}$ which obtain label $\ell$ at the second round. In particular, thanks to our conditioning that $B\setminus \bar{B}_{\ell_1, \ell}$ contains at most 22 basins of total size at most $22\cdot 3np$ and the fact that every label in $[m]$ is obtained with approximately the same probability by the vertices in $B$, the mean of $Y_{\ell}$ is of order $\Theta(|B\setminus \bar{B}_{\ell_1, \ell}|/m)=\Theta(np/m)$, but also at most $22\cdot 3np/m\le 100np/m$, say.

Next, we show that a.a.s., for all $\ell\in [m]$, $Y_\ell$ is ``close'' to its expectation. The argument is similar to Step 2 in the proof of Lemma~\ref{claim 4}. Note that $Y_\ell = \sum_{v\in \vtwo_1(\ell)}\mathds{1}_{v\in \vtwo_2(\ell)}$.
Then, we have that
\begin{align}
    \mathbb V(Y_\ell) 
    &=\; \sum_{u,v\in \vtwo_1(\ell)} \Big(\mathbb E[\mathds{1}_{u\in \vtwo_2( \ell)}\mathds{1}_{v\in \vtwo_2( \ell)}] - \mathbb E[\mathds{1}_{v\in \vtwo_2( \ell)}]\mathbb E[\mathds{1}_{v\in \vtwo_2( \ell)}]\Big)\nonumber \\
    &=\; (1+o(1)) \mathbb E[X_\ell] + \sum_{u,v\in \vtwo_1(\ell): u\neq v} \Big(\mathbb E[\mathds{1}_{u\in \vtwo_2( \ell)}\mathds{1}_{v\in \vtwo_2( \ell)}] - \mathbb E[\mathds{1}_{u\in \vtwo_2( \ell)}]\mathbb E[\mathds{1}_{v\in \vtwo_2( \ell)}]\Big).\label{eq:snd line}
\end{align}

Using a transformation similar to the one from equations~\eqref{eq snd moment}--\eqref{eq rewrite}, we deduce that for all pairs of different vertices $u,v$ in $B_1(\ell)$, $\mathbb E[\mathds{1}_{u\in \vtwo_2(\ell)}\mathds{1}_{v\in \vtwo_2(\ell)}]$ rewrites as
\begin{align*}
&q \mathbb E[\mathds{1}_{u\in \vtwo_2(\ell)}\mathds{1}_{v\in \vtwo_2(\ell)}\mid uv\notin G_n] + p \mathbb E[\mathds{1}_{u\in \vtwo_2(\ell)}\mathds{1}_{v\in \vtwo_2(\ell)}\mid uv\in G_n]\\
=\; 
&q \mathbb E[\mathds{1}_{u\in \vtwo_2(\ell)}\mid uv\notin G_n]\mathbb E[\mathds{1}_{v\in \vtwo_2(\ell)}\mid uv\notin G_n]\\ 
&\hspace{17em}+ p \mathbb E[\mathds{1}_{u\in \vtwo_2(\ell)}\mid uv\in G_n]\mathbb E[\mathds{1}_{v\in \vtwo_2(\ell)}\mid uv\in G_n]\\
=\; 
&q \mathbb E[\mathds{1}_{u\in \vtwo_2(\ell)}\mid uv\notin G_n]^2 + p \mathbb E[\mathds{1}_{u\in \vtwo_2(\ell)}\mid uv\in G_n]^2,
\end{align*}
while $\mathbb E[\mathds{1}_{u\in \vtwo_2( \ell)}]\mathbb E[\mathds{1}_{v\in \vtwo_2( \ell)}]$ rewrites as
\begin{align*}
&(q\mathbb E[\mathds{1}_{u\in \vtwo_2( \ell)}\mid uv\notin G_n]+p\mathbb E[\mathds{1}_{u\in \vtwo_2(\ell)}\mid uv\in G_n])\\
&\hspace{17em}\cdot (q\mathbb E[\mathds{1}_{v\in \vtwo_2( \ell)}\mid uv\notin G_n]+p\mathbb E[\mathds{1}_{v\in \vtwo_2(\ell)}\mid uv\in G_n]).
\end{align*}
This implies that the general term in the sum in~\eqref{eq:snd line} rewrites as
\begin{align*}
&pq (\mathbb E[\mathds{1}_{u\in \vtwo_2(\ell)}\mid uv\notin G_n] - \mathbb E[\mathds{1}_{u\in \vtwo_2(\ell)}\mid uv\in G_n])\\
&\hspace{17em}\cdot (\mathbb E[\mathds{1}_{v\in \vtwo_2(\ell)}\mid uv\notin G_n] - \mathbb E[\mathds{1}_{v\in \vtwo_2(\ell)}\mid uv\in G_n])\\
=\;
&pq (\mathbb E[\mathds{1}_{v\in \vtwo_2(\ell)}\mid uv\notin G_n] - \mathbb E[\mathds{1}_{v\in \vtwo_2(\ell)}\mid uv\in G_n])^2.
\end{align*}

Finally, to deduce the analogue of~\eqref{eq w}, we show that 
\begin{equation}\label{eq:almost=}
    \mathbb E[\mathds{1}_{v\in \vtwo_2(\ell)}\mid uv\notin G_n] = (1+o(1)) \mathbb E[\mathds{1}_{v\in \vtwo_2(\ell)}\mid uv\in G_n] = (1+o(1)) \mathbb E[\mathds{1}_{v\in \vtwo_2( \ell)}].
\end{equation}

\noindent
Fix $j\in [m]\setminus \{\ell\}$. Since by~\eqref{eq:max2} a constant number of edges do not modify the probability of obtaining a label in $[m]$ apart from lower order terms, the probabilities
\begin{equation}\label{eq conditional comparison}
    \frac{\Prob(v\in\vtwo_2(\ell)\mid uv\in G_n)}{\Prob(v\in\vtwo_2(\ell)\mid uv\in G_n)+\Prob(v\in\vtwo_2(j)\mid uv\in G_n)} \quad \text{and}\quad \frac{\Prob(v\in\vtwo_2( \ell))}{\Prob(v\in\vtwo_2(\ell))+\Prob(v\in\vtwo_2(j))}
\end{equation}
are the same up to a factor of $1+o(1)$, and the comparison conditionally on the event $uv\notin G_n$ instead of $uv\in G_n$ is also of the same order, which proves~\eqref{eq:almost=}.

We conclude that $\mathbb{V}(Y_\ell) = (1+o(1))(\mathbb E Y_\ell + o(p (\mathbb E Y_\ell)^2)) = o(p (\mathbb E Y_\ell)^2))$, where we used that
$\mathbb E Y_{\ell}=\Theta((\mathbb E Y_{\ell})^2/(np/m))=o(p(\mathbb E Y_\ell)^2)$. 
Finally, recalling that $\mathbb EY_\ell\leq 100np/m$, for every $\ell\in [m]$,
\begin{equation*}
    \Prob\Big(Y_\ell\ge \frac{200np}{m}\Big)\le \frac{\mathbb V(Y_\ell)}{(\mathbb E Y_{\ell})^2} = o(p).
\end{equation*}
Since ties are broken towards smaller labels in the first round, the above display actually holds for all $\ell\in [2k]$. 
Since $kp = o(1)$, a union bound finishes the proof.
\end{proof}

\begin{remark}\label{rem snd level}
The same argument (up to minor modifications in the definitions of $a$ and $b$ in the proof of Lemma~\ref{lem snd level 2} due to $v$ possibly being in a different basin) shows that a.a.s.~for all $\ell\in [k]\setminus \{\ell_1\}$, $\mathfrak{B}_2(\ell)\le 200np\log n$.
Indeed, for $\ell \le \tfrac{k}{\log n}$, the argument of the proof of Lemma~\ref{lem snd level 2} can be applied directly to bound from above the probability of the complementary event, and for larger values of $\ell$, this probability can only decrease. The result then follows by a union bound over all $\ell \in [k] \setminus \{\ell_1\}$.
\end{remark}

\begin{lemma}\label{lem technical}
Let $(\Omega_\ell)_{\ell=1}^k$ be subsets of $\vtwo$ such that for every $\ell\in [k] \setminus \{\ell_1\}$, $|\Omega_\ell| - |\Omega_{\ell_1}|\le 200n^{3/2}p^{5/2+\eps/100}$
and $|\Omega_\ell|\le 200 np \log n$. Then, a.a.s.~for every vertex $v$ in $\vthree$ and every $\ell\in [k]$, it holds that $|N(v)\cap \Omega_{\ell}|-|N(v)\cap \Omega_{\ell_1}|\leq 400 n^{3/2} p^{7/2+\eps/100}$.
\end{lemma}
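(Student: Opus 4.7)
The plan is to apply Chernoff's bound (Lemma~\ref{lem chernoff}) followed by a union bound, the key observation being that the sets $(\Omega_\ell)_{\ell=1}^k\subseteq \vtwo$ live in Level~2 while $v\in \vthree$ lives in Level~3. In the intended application, the membership of vertices in $\Omega_\ell$ is determined by edges inside $\vone\cup\vtwo$ (which govern the label attributions of round~2 of ALAP restricted to $\vtwo$), whereas $N(v)\cap \vtwo$ is determined by the edges between $v$ and $\vtwo$. These two edge-sets are disjoint, so after conditioning on $(\Omega_\ell)_{\ell=1}^k$ the variables $X_\ell := |N(v)\cap\Omega_\ell|$ are binomial with parameters $(|\Omega_\ell|,p)$.

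Fix $v\in \vthree$ and $\ell\in [k]\setminus\{\ell_1\}$, and decompose
\[
X_\ell - X_{\ell_1} \;=\; (X_\ell-\mathbb E X_\ell) - (X_{\ell_1}-\mathbb E X_{\ell_1}) + (|\Omega_\ell|-|\Omega_{\ell_1}|)p.
\]
The deterministic term on the right-hand side is at most $200\,n^{3/2}p^{7/2+\eps/100}$ by hypothesis. Setting $t:=100\,n^{3/2}p^{7/2+\eps/100}$, it therefore suffices to prove that each of the two centered binomials has absolute value at most $t$ with probability $1-o(n^{-2})$, and then to union-bound over the $O(nk)=o(n^2)$ pairs $(v,\ell)\in\vthree\times[k]$. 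Since the hypothesis $|\Omega_\ell|,|\Omega_{\ell_1}|\leq 200\,np\log n$ gives $\mathbb E X_\ell,\mathbb E X_{\ell_1}\leq 200\,np^2\log n$, the right-hand side of Chernoff's inequality reduces to $t^2/\Theta(\mathbb E X_\ell)$ (once one checks that $t\leq \mathbb E X_\ell$ for $n$ large, which follows from $p\ll n^{-1/3}$).

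The only thing left to verify---and the place where the lower bound on $p$ enters---is that this exponent grows faster than $\log n$. Using $p\geq n^{-3/8+\eps}$ (equivalent to $np\geq n^{5/8+\eps}$), a direct substitution gives
\[
\frac{t^2}{\mathbb E X_\ell} \;=\; \Theta\!\left(\frac{n^2 p^{5+\eps/50}}{\log n}\right) \;\geq\; \frac{n^{1/8+\Omega(\eps)}}{\log n} \;=\; \omega(\log n),
\]
so each tail probability is at most $\exp(-\omega(\log n))=o(n^{-3})$ and the union bound closes the argument. I do not foresee any conceptual difficulty in executing this plan; the only step requiring some care is verifying that the polynomial margin above holds uniformly throughout the whole regime $n^{5/8+\eps}\leq np\ll n^{2/3}$, which is straightforward given the constraint $\eps\in(0,1/24)$.
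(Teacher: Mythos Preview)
Your argument is correct and is essentially the paper's proof: both condition on the sets $\Omega_\ell\subseteq B$, use that each $|N(v)\cap\Omega_\ell|$ is $\mathrm{Bin}(|\Omega_\ell|,p)$ for $v\in C$, apply Chernoff with the upper bound $\mathbb E X_\ell\le 200np^2\log n$, and take a union bound over $v$ and~$\ell$. The only cosmetic difference is that the paper fixes the deviation at $\sqrt{200np^2(\log n)^3}$ and then checks this is $\ll n^{3/2}p^{7/2+\eps/100}$, whereas you set the deviation equal to the target $t$ and check the exponent is $\omega(\log n)$; one minor imprecision is that your parenthetical should read ``$t\le 200np^2\log n$'' (the upper bound on $\mathbb E X_\ell$) rather than ``$t\le\mathbb E X_\ell$'', since no lower bound on $|\Omega_\ell|$ is assumed --- but the computation you give verifies exactly this and the bound only improves when $\mathbb E X_\ell$ is smaller.
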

\begin{proof} 
Fix a vertex $v\in \vthree$. By Chernoff's bound, with probability $1-o(n^{-2})$, we have that both 
\begin{align*}
|N(v)\cap \Omega_{\ell}| 
&\le p|\Omega_{\ell}| + \sqrt{p\cdot 200np(\log n)^3} \\
|N(v)\cap \Omega_{\ell_1}| 
&\ge p|\Omega_{\ell_1}| - \sqrt{p\cdot 200np(\log n)^3},
\end{align*}
and therefore also
$$
|N(v)\cap \Omega_{\ell}|-|N(v)\cap \Omega_{\ell_1}| \le
p(|\Omega_{\ell}| - |\Omega_{\ell_1}|)+
2\sqrt{200np^2(\log n)^3}\le 400 n^{3/2} p^{7/2+\eps/100}
$$
where we used that $n^{3/2}p^{7/2+\eps/100}\gg \sqrt{np^2(\log n)^{3}}$ implied by $np\ge n^{5/8+\eps}$.
The lemma follows by a union bound over the complementary events for all $v\in \vthree$ and $\ell\in [k]\setminus \{\ell_1\}$.
\end{proof}

\begin{proof}[Proof of the second point in Theorem~\ref{thm 1}]

We first show the desired conclusion for the third round of ALAP and then make the connection with LPA. Note that, by Lemma~\ref{lem level 2}, Lemma~\ref{lem snd level 2} and Remark~\ref{rem snd level}, the assumptions of Lemma~\ref{lem technical} with $\Omega_i=\vtwo_2(i)$ are satisfied.

Recall that attributing the labels of the vertices in $C$ based only on their edges towards $B$ leaves all edges in $C$ unexposed. Using this, we prove that the surplus coming from the neighbors with label $\ell_1$ in Level 3 is far larger than $400 n^{3/2} p^{7/2+\eps/100}$ 
for any vertex (note that based on the conclusion of Lemma~\ref{lem technical} with the above choice of $(\Omega_i)_{i=1}^k$, this is sufficient to conclude the proof). Indeed, fix a vertex $v\in \vthree$ and, 
for every $\ell\in [k]$, let $Y_\ell$ be the number of neighbors of $v$ in $C_2(\ell)$, that is, $Y_\ell=|N(v)\cap\vthree_2(\ell)|$. 
Then, by Lemma~\ref{claim 3'}, a.a.s., 
\[
\sthree_2(\ell_1) \ge \frac{n}{2k}.
\]
We condition on this event. Then,
an application of Chernoff's bound shows (for positive real numbers $(\eps_n)_{n\geq 1}$ chosen as before the statement of Lemma~\ref{lem fst snd max}) that
\begin{equation*}
\begin{split}
\Prob(Y_{\ell_1} - \mathbb E Y_{\ell_1}\le -p^{1/2} (np)^{-2\eps_n} \mathbb E Y_{\ell_1})
&\le \exp\Big(-\frac{p (np)^{-4\eps_n} \mathbb E Y_{\ell_1}}{3}\Big)\\
&\le \exp\Big(-\frac{p^2 (np)^{-4\eps_n} n}{6k}\Big) = o\left(\frac{1}{kn}\right),
\end{split}
\end{equation*}
while, for every other label $\ell\in [k]\setminus \{\ell_1\}$, we have that
\begin{equation*}
    \Prob\big(Y_\ell - \mathbb E Y_\ell\ge p^{1/2} (np)^{-2\eps_n} \mathbb E Y_{\ell_1}\big)\le \exp\Big(-\frac{p (np)^{-4\eps_n} \mathbb E Y_{\ell_1}}{3}\Big) = o\left(\frac{1}{kn}\right).
\end{equation*}

Using Lemma~\ref{claim 3'}, we conclude that, with probability $1-o(\tfrac{1}{kn})$, for every vertex $v\in \vthree$, by taking into consideration its neighbors in $B\cup C$,
the number of neighbors of $v$ with label $\ell_1$ after round 2 that are in Level 3 is at least by 
\begin{align*}
& Y_{\ell_1}-Y_{\ell}-400 n^{3/2} p^{7/2+\eps/100} \ge \mathbb E Y_{\ell_1}-\mathbb E Y_{\ell} - 2p^{1/2}(np)^{-2\varepsilon_n}\mathbb E Y_{\ell_1}-400 n^{3/2} p^{7/2+\eps/100} \\
&=\;
\Omega(p^{1/2} (np)^{-\eps_n} \mathbb E Y_{\ell_1}) - 2p^{1/2} (np)^{-2\eps_n} \mathbb E Y_{\ell_1} -400 n^{3/2} p^{7/2+\eps/100} = \Omega(p^{1/2} (np)^{-\eps_n} \mathbb E Y_{\ell_1})
\end{align*}
larger than the number of neighbors with label $\ell$, where the last equality uses that $\mathbb E Y_{\ell_1}\ge \tfrac{np}{2k}$ and so
\[p^{1/2} (np)^{-\eps_n} \mathbb E Y_{\ell_1}\ge n^{3/2+o(1)} p^{7/2}\gg n^{3/2} p^{7/2+\eps/100}.\]
In particular, a union bound over the complementary events for all vertices in Level 3 and all labels $\ell\in [k]\setminus \{\ell_1\}$ implies that a.a.s.\ after the third round all vertices in $\vthree$ have a surplus of $\Omega(p^{1/2} (np)^{-\eps_n} \tfrac{np}{2k})\gg K p$ neighbors with label $\ell_1$  
compared to neighbors with any other label (with $K$ defined as in Lemma~\ref{lemma 2}). Since by Lemma~\ref{lemma 2} a.a.s.\ the labels of the vertices $V\setminus V([K])$ after the second round coincide in ALAP and LPA, and Chernoff's bound implies that a.a.s.\ every vertex has at most $O(K p)$ neighbors among $V([K])$, a.a.s.\ all vertices in Level 3 receive label $\ell_1$ after round 3 both in ALAP and in LPA. As there are more than $0.9n$ vertices in Level 3, the proof follows by  Lemma~\ref{claim 5}. 
\end{proof}

\section{Proof of Lemma~\ref{lem:bis}}\label{sec:suppl}
In this proof, we work with an integer $M\in [0,n]$ and under the convention that, when a fraction has a positive numerator and denominator $0$, it is equal to infinity.
In this section, we present the delayed proof of Lemma~\ref{lem:bis}.
First of all, by rewriting the claimed inequality in the equivalent form
\[\frac{p_X(M,\rho)}{p_X(-1,\rho)}\ge \frac{p_Y(M,\rho)}{p_Y(-1,\rho)},\]
we observe that it is sufficient to show the lemma for $n' = n-1$ (which we do assume): indeed, to deduce the inequality for smaller values of $n'$, it is enough to apply induction.

We are going to show that, for every $M\ge 0$,
\begin{equation}\label{eq:1'}
\frac{p_X(M,\rho)}{p_Y(M,\rho)}\ge \frac{p_X(M-1,\rho)}{p_Y(M-1,\rho)}.  
\end{equation}
Using that $p_X(M-1,\rho) > p_X(M,\rho)$ and $p_Y(M-1,\rho) > p_Y(M,\rho)$ for all integer $M\in [0,n-1]$ and real $\rho\ge 0$,~\eqref{eq:1'} is equivalent to
\begin{equation}\label{eq:1}
\frac{p_X(M,\rho)}{p_Y(M,\rho)}\ge \frac{p_X(M-1,\rho)-p_X(M,\rho)}{p_Y(M-1,\rho)-p_Y(M,\rho)}.
\end{equation}
Note that the conclusion of the lemma is obtained by consecutive applications of~\eqref{eq:1'}; nevertheless, for technical reasons, we concentrate on showing~\eqref{eq:1} instead. To this end, we show the inequalities
\begin{equation}\label{eq:2}
\frac{p_X(n,\rho)}{p_Y(n,\rho)}\ge \frac{p_X(M-1,\rho)-p_X(M,\rho)}{p_Y(M-1,\rho)-p_Y(M,\rho)}   
\end{equation}
and 
\begin{equation}\label{eq:3}
\forall i\in [n-M],\ \frac{p_X(M+i-1, \rho) - p_X(M+i, \rho)}{p_Y(M+i-1, \rho) - p_Y(M+i, \rho)} \ge \frac{p_X(M+i-2, \rho) - p_X(M+i-1, \rho)}{p_Y(M+i-2, \rho) - p_Y(M+i-1, \rho)}.    
\end{equation}
Indeed,~\eqref{eq:2} and~\eqref{eq:3} imply that simultaneously
\begin{align*}
\frac{p_X(n,\rho)}{p_Y(n,\rho)}
&\ge \frac{p_X(M-1,\rho)-p_X(M,\rho)}{p_Y(M-1,\rho)-p_Y(M,\rho)},\\
\frac{p_X(n-1, \rho) - p_X(n, \rho)}{p_Y(n-1, \rho) - p_Y(n, \rho)}
&\ge \frac{p_X(M-1,\rho)-p_X(M,\rho)}{p_Y(M-1,\rho)-p_Y(M,\rho)},\\
&\ldots\\
\frac{p_X(M,\rho)-p_X(M+1,\rho)}{p_Y(M,\rho)-p_Y(M+1,\rho)}
&\ge \frac{p_X(M-1,\rho)-p_X(M,\rho)}{p_Y(M-1,\rho)-p_Y(M,\rho)},
\end{align*}
and~\eqref{eq:1} follows by summing the numerators and the denominators of the fractions on the left-hand side.

Again,~\eqref{eq:2} is clear since $p_Y(n,\rho) = 0$ while all other terms are positive real numbers. Set $N = M+i-1$.
Then, the left-hand side of~\eqref{eq:3} rewrites as
\footnotesize 
\[\frac{\frac{1}{\rho+1}\mathbb P(X=N>Y)+\frac{1}{\rho+2}\mathbb P(X=N=Y)+(1-\frac{1}{\rho+1})\mathbb P(X=N+1>Y)+(\frac{1}{2}-\frac{1}{\rho+2})\mathbb P(X=N+1=Y)}{\frac{1}{\rho+1}\mathbb P(Y=N>X)+\frac{1}{\rho+2}\mathbb P(Y=N=X)+(1-\frac{1}{\rho+1})\mathbb P(Y=N+1>X)+(\frac{1}{2}-\frac{1}{\rho+2})\mathbb P(Y=N+1=X)},\]
\normalsize
while the right-hand side is given by
\footnotesize
\[\frac{\frac{1}{\rho+1}\mathbb P(X=N-1>Y)+\frac{1}{\rho+2}\mathbb P(X=N-1=Y)+(1-\frac{1}{\rho+1})\mathbb P(X=N>Y)+(\frac{1}{2}-\frac{1}{\rho+2})\mathbb P(X=N=Y)}{\frac{1}{\rho+1}\mathbb P(Y=N-1>X)+\frac{1}{\rho+2}\mathbb P(Y=N-1=X)+(1-\frac{1}{\rho+1})\mathbb P(Y=N>X)+(\frac{1}{2}-\frac{1}{\rho+2})\mathbb P(Y=N=X)}.\]

\normalsize
We divide each of the numerator and the denominator of the above two displays by $(1-p)^{2n-1}$. 
As a result, each of them can be seen as a polynomial of $x = \tfrac{p}{1-p}$.
For concreteness, we assume that the first display rewrites as $P(x)/Q(x)$ while  the second display rewrites as $R(x)/S(x)$ where
\[A(x) = A_1(x) + \rho A_2(x)\qquad\text{and}\qquad B(x) = B_1(x) + \rho B_2r (x)\]
with
\begin{alignat*}{3}
A_1(x) &= 2\binom{n}{N}\binom{n-1}{N} x^{2N},\quad 
&&A_2(x) = \binom{n}{N+1}\binom{n-1}{N+1} x^{2N+2},\\
B_1(x) &= 2\binom{n}{N-1}\binom{n-1}{N-1} x^{2N-2},\quad 
&&B_2(x) = \binom{n}{N}\binom{n-1}{N} x^{2N},
\end{alignat*}
and
\begin{alignat*}{3}
P_1(x) &= \binom{n}{N} \sum_{i=0}^{N-1} \binom{n-1}{i} x^{N+i},\quad 
&&P_2(x) = \binom{n}{N+1} \sum_{i=0}^{N} \binom{n-1}{i} x^{N+i+1},\\
Q_1(x) &= \binom{n-1}{N} \sum_{i=0}^{N-1} \binom{n}{i} x^{N+i},\quad 
&&Q_2(x) = \binom{n-1}{N+1} \sum_{i=0}^{N} \binom{n}{i} x^{N+i+1},\\
R_1(x) &= \binom{n}{N-1} \sum_{i=0}^{N-2} \binom{n-1}{i} x^{N+i-1},\quad 
&&R_2(x) = \binom{n}{N} \sum_{i=0}^{N-1} \binom{n-1}{i} x^{N+i},\\
S_1(x) &= \binom{n-1}{N-1} \sum_{i=0}^{N-2} \binom{n}{i} x^{N+i-1},\quad 
&&S_2(x) = \binom{n-1}{N} \sum_{i=0}^{N-1} \binom{n}{i} x^{N+i},
\end{alignat*}
and finally,
\begin{alignat*}{3}
P(x) &= \frac{P_1(x)}{\rho+1}+\frac{\rho P_2(x)}{\rho+1}+\frac{A(x)}{2\rho+4},\quad &&Q(x) = \frac{Q_1(x)}{\rho+1}+\frac{\rho Q_2(x)}{\rho+1}+\frac{A(x)}{2\rho+4},\\
R(x) &= \frac{R_1(x)}{\rho+1}+\frac{\rho R_2(x)}{\rho+1}+\frac{B(x)}{2\rho+4},\quad &&S(x) = \frac{S_1(x)}{\rho+1}+\frac{\rho S_2(x)}{\rho+1}+\frac{B(x)}{2\rho+4}.
\end{alignat*}
Note that actually $P_1(x)=R_2(x)$ and $Q_1(x)=S_2(x)$, but we nevertheless keep both notations for simplicity of the exposition.
In the sequel, we abbreviate notation and write $P_1,Q_1,\ldots$ instead of $P_1(x),Q_1(x),\ldots$, leaving the dependence on $x$ implicit for better readability.

To derive~\eqref{eq:3}, we show that, for all $x,\rho\ge 0$, $PS\ge QR$ or, equivalently, that
\begin{equation}\label{eq:polyrho}
(\rho+1)^2(2\rho+4)PS\geq (\rho+1)^2(2\rho+4)QR.
\end{equation}
Observe that
\begin{align*}
  & (\rho+1)^2(2\rho+4)PS
  \\
  & =
  P_1\big[(2\rho+4)S_1 + \rho(2\rho+4)S_2 + (\rho+1)B_1+\rho(\rho+1)B_2\big] \\
  & \qquad +
  P_2\big[\rho(2\rho+4)S_1 + \rho^2(2\rho+4)S_2 + \rho(\rho+1)B_1+\rho^2(\rho+1)B_2] \\
  & \qquad +
  (A_1+\rho A_2)\big[(\rho+1)S_1 + \rho(\rho+1)S_2\big]  + (\rho+1)^2 AB/(2\rho+4) \\
  & = \rho^3\cdot\big[2P_2S_2+P_2B_2+S_2A_2] \\
  & \qquad +
  \rho^2\cdot\big[2P_1S_2+P_1B_2+2P_2S_1+4P_2S_2+P_2B_1+P_2B_2+S_1A_2+S_2A_1+S_2A_2\big] \\
  & \qquad +
  \rho\cdot\big[2P_1S_1+4P_1S_2+P_1B_1+P_1B_2+4P_2S_1+P_2B_1+S_1A_1+S_1A_2+S_2A_1\big] \\
  & \qquad +
  1\cdot\big[4P_1S_1+P_1B_1+S_1A_1\big] + (\rho+1)^2 AB/(2\rho+4),
\end{align*}
and similarly
\begin{align*}
  & (\rho+1)^2(2\rho+4)QR
  \\
  & = \rho^3\cdot\big[2Q_2R_2+Q_2B_2+R_2A_2] \\
  & \qquad +
  \rho^2\cdot\big[2Q_1R_2+Q_1B_2+2Q_2R_1+4Q_2R_2+Q_2B_1+Q_2B_2+R_1A_2+R_2A_1+R_2A_2\big] \\
  & \qquad +
  \rho\cdot\big[2Q_1R_1+4Q_1R_2+Q_1B_1+Q_1B_2+4Q_2R_1+Q_2B_1+R_1A_1+R_1A_2+R_2A_1\big] \\
  & \qquad +
  1\cdot\big[4Q_1R_1+Q_1B_1+R_1A_1\big] + (\rho+1)^2 AB/(2\rho+4),
\end{align*}

The remainder of this proof is dedicated to showing the following simpler auxiliary inequalities for every $x\ge 0$:
\begin{enumerate}
    \item[\textbf{I1}] $P_1 S_1\ge Q_1 R_1$,
    \item[\textbf{I2}] $P_2 S_2\ge Q_2 R_2$,
    \item[\textbf{I3}] $P_1 S_2 + P_2 S_1\ge Q_1 R_2 + Q_2 R_1$,
\end{enumerate}
and use them to derive that, for every $x\ge 0$, each of the following inequalities holds as well:
\begin{enumerate}
\item[\textbf{J1}] $2P_2 S_2+P_2B_2+S_2A_2\ge 2Q_2 R_2+Q_2B_2+R_2A_2$,
\item[\textbf{J2}] $2P_1S_2+P_1B_2+2P_2S_1+4P_2S_2+P_2B_1+P_2B_2+S_1A_2+S_2A_1+S_2A_2$
\\
\hspace*{10em}$\geq 2Q_1R_2+Q_1B_2+2Q_2R_1+4Q_2R_2+Q_2B_1+Q_2B_2+R_1A_2+R_2A_1+R_2A_2$,
\item[\textbf{J3}] $2P_1S_1+4P_1S_2+P_1B_1+P_1B_2+4P_2S_1+P_2B_1+S_1A_1+S_1A_2+S_2A_1$\\
\hspace*{10em}$\geq 2Q_1R_1+4Q_1R_2+Q_1B_1+Q_1B_2+4Q_2R_1+Q_2B_1+R_1A_1+R_1A_2+R_2A_1$,
\item[\textbf{J4}] $4P_1S_1+P_1B_1+S_1A_1\ge 4Q_1R_1+Q_1B_1+R_1A_1$.
\end{enumerate}

\paragraph{Showing \textbf{I1}.} 
We will show that all coefficients of the polynomial $P_1 S_1 - Q_1 R_1$ are positive. First, we consider the terms $x^{2N-1+k}$ with $k\in [0,N-2]$. For any such $k$, the coefficient in front of $x^{2N-1+k}$ rewrites
\begin{align}\label{eq:pos1}
\sum_{i,j\ge 0,\, i+j=k} \bigg(\binom{n}{N} \binom{n-1}{i} \binom{n-1}{N-1} \binom{n}{j} - \binom{n-1}{N} \binom{n}{i} \binom{n}{N-1} \binom{n-1}{j}\bigg).
\end{align}
Then, by the inequality $\binom{n}{N}\binom{n-1}{N-1}\geq \binom{n-1}{N}\binom{n}{N-1}$ and the symmetry of the sum with respect to $i,j$, the positivity of the last expression follows.
In order to prepare the terrain for the case $k\in [N-1,2N-3]$, we further transform~\eqref{eq:pos1}.
Using that, for every $\ell\ge 0$, $\tbinom{n-1}{\ell} = \tfrac{n-\ell}{n}\tbinom{n}{\ell}$, we obtain that the above coefficient rewrites
\begin{align*}
\sum_{i,j\ge 0,\, i+j=k} \binom{n}{N} \binom{n}{i} \binom{n}{N-1} \binom{n}{j} \frac{(n-i)(n-N+1)-(n-j)(n-N)}{n^2},
\end{align*}
which, thanks to the inherent symmetry between $i$ and $j$ above, rewrites
\begin{align*}
\frac{1}{2} \sum_{i,j\ge 0,\, i+j=k} \binom{n}{N} \binom{n}{i} \binom{n}{N-1} \binom{n}{j} \frac{(n-i+n-j)((n-N+1)-(n-N))}{n^2},
\end{align*}
or equivalently
\begin{align}\label{eq:pos2}
\frac{1}{2} \sum_{i,j\ge 0,\, i+j=k} \binom{n}{N} \binom{n}{i} \binom{n}{N-1} \binom{n}{j} \frac{2n-k}{n^2} > 0.
\end{align}
Next, we turn to the coefficients in front of the terms $x^{2N-1+k}$ for $k\in [N-1,2N-3]$ where $i$ could be equal to $N-1$ but $j$ cannot, hence the loss of the symmetry used above.
Nevertheless, using that the term corresponding to $i=N-1, j=k-(N-1)$ is the only term without symmetric counterpart, we obtain that the coefficient in front of $x^{2N-1+k}$ rewrites as a sum of two terms. Following~\eqref{eq:pos2}, the first term contains the `symmetric part' of the sum, which is equal to
\begin{align*}
\frac{1}{2} \sum_{i,j\le N-2,\, i+j=k} \binom{n}{N} \binom{n}{i} \binom{n}{N-1} \binom{n}{j} \frac{2n-k}{n^2},
\end{align*}
and further rewrites as
\begin{align}\label{eq:I11}
\binom{n}{N}\binom{n}{N-1}\binom{n}{N-1}\binom{n}{k-N+1} \sum_{i,j\le N-2,\, i+j=k} \frac{\tbinom{n}{i}\tbinom{n}{j}}{\tbinom{n}{N-1}\tbinom{n}{k-N+1}} \frac{n-k/2}{n^2}.
\end{align}
The second term is
\begin{align*}
\binom{n}{N}\binom{n-1}{N-1}^2\binom{n}{k-N+1} - \binom{n-1}{N}\binom{n}{N-1}^2\binom{n-1}{k-N+1},
\end{align*}
which further rewrites as
\begin{align}
& \binom{n}{N}\binom{n}{N-1}^2\binom{n}{k-N+1} \frac{(n-N+1)^2-(n-N)(n+N-k-1)}{n^2}\nonumber\\
& =
\binom{n}{N}\binom{n}{N-1}^2\binom{n}{k-N+1} \frac{1-(n-N)(2N-k-3)}{n^2}.\label{eq:I12}
\end{align}
Finally, using that minimizing the expression $\tbinom{n}{i}\tbinom{n}{j}$ under the constraints $i,j\le N-2$ and $i+j=k$ requires $|i-j|$ to be largest possible (that is, $i=N-2$ and $j=k-N+2$ or vice versa), we derive that $\tbinom{n}{i}\tbinom{n}{j}\ge \tbinom{n}{N-2}\tbinom{n}{k-N+2}\ge \tbinom{n}{N-1}\tbinom{n}{k-N+1}$. 
This yields the inequality
\[\sum_{i,j\le N-2,\, i+j=k} \frac{\tbinom{n}{i}\tbinom{n}{j}}{\tbinom{n}{N-1}\tbinom{n}{k-N+1}} \frac{n-k/2}{n^2}\ge \frac{(2N-k-3)(n-k/2)}{n^2}\ge \frac{(2N-k-3)(n-N)}{n^2},\]
which shows that the sum of~\eqref{eq:I11} and~\eqref{eq:I12} is positive and finishes the proof of \textbf{I1}.

\paragraph{Showing \textbf{I2}.}  The proof of this second step follows from the proof of the first step by replacing $N$ with $N+1$ in all places. 

\paragraph{Showing \textbf{I3}.}  First, since $P_1 = R_2$ and $Q_1 = S_2$, we have that $P_1S_2 = Q_1R_2$. Thus, it remains to study the difference $P_2S_1 - Q_2R_1$. As for \textbf{I1}, we begin by studying the terms $x^{2N+k}$ with $k\in [0,N-2]$. For any such $k$, the coefficient in front of $x^{2N+k}$ rewrites as
\begin{align*}
\sum_{i,j\ge 0,\, i+j=k} \bigg(\binom{n}{N+1} \binom{n-1}{i} \binom{n-1}{N-1} \binom{n}{j} - \binom{n-1}{N+1} \binom{n}{i} \binom{n}{N-1} \binom{n-1}{j}\bigg).
\end{align*}
Again, the positivity of the last sum follows from the symmetry of the sum with respect to $i,j$ and the inequality $\binom{n}{N+1}\binom{n-1}{N-1}\geq\binom{n-1}{N+1}\binom{n}{N-1}$.
Moreover, using that, for every $\ell\ge 0$, $\tbinom{n-1}{\ell} = \tfrac{n-\ell}{n}\tbinom{n}{\ell}$, we obtain that the above coefficient rewrites
\begin{align*}
\sum_{i,j\ge 0,\, i+j=k} \binom{n}{N+1} \binom{n}{i} \binom{n}{N-1} \binom{n}{j} \frac{(n-i)(n-N+1)-(n-j)(n-N-1)}{n^2},
\end{align*}
which, thanks to the inherent symmetry between $i$ and $j$ above, rewrites
\begin{align*}
\frac{1}{2} \sum_{i,j\ge 0,\, i+j=k} \binom{n}{N+1} \binom{n}{i} \binom{n}{N-1} \binom{n}{j} \frac{(n-i+n-j)((n-N+1)-(n-N-1))}{n^2},
\end{align*}
or equivalently
\begin{align*}
\sum_{i,j\ge 0,\, i+j=k} \binom{n}{N+1} \binom{n}{i} \binom{n}{N-1} \binom{n}{j} \frac{2n-k}{n^2} > 0.
\end{align*}
Next, we turn to the coefficients in front of the terms $x^{2N+k}$ for $k\in [N-1,2N-2]$ where $i$ could be equal to $N-1$ or $N$ but $j$ cannot, hence the loss of the symmetry used above.
We consider three subcases.

\paragraph{Case 3.1: $k=N-1$.} 
Then, the term corresponding to $i=N-1$, 
 $j=0$ is the only term without symmetric counterpart.
Hence, the coefficient in front of $x^{3N-1}$ rewrites as the sum of two terms. By the latter computations, the (symmetric) first term is given by
\begin{align}\label{eq:case3.1.1}
\sum_{i,j\ge  1,\, i+j=N-1} \binom{n}{N+1} \binom{n}{i} \binom{n}{N-1} \binom{n}{j} \frac{2n-k}{n^2},
\end{align}
while the second term is
\begin{align*}
\binom{n}{N+1}\binom{n-1}{N-1}^2 - \binom{n-1}{N+1}\binom{n}{N-1}^2,
\end{align*}
which further rewrites as
\begin{align}
&\binom{n}{N+1}\binom{n}{N-1}^2\bigg(\frac{(n-N+1)^2}{n^2} - \frac{(n-N-1)n}{n^2}\bigg)\nonumber\\
& =
\binom{n}{N+1}\binom{n}{N-1}^2 \frac{4-(n-N-1)(N-3)}{n^2}.\label{eq:case3.1.2}
\end{align}
Finally, using that $\tbinom{n}{i}\tbinom{n}{j}\ge \tbinom{n}{N-1}$ for all $i,j\ge 1$ with $i+j=N-1$ together with the fact that
\[\sum_{i,j\ge 1,\, i+j=N-1} \frac{2n-k}{n^2} = \frac{(N-2)(2n-N+1)}{n^2} \ge \frac{(N-3)(n-N-1)}{n^2},\]
showing that the sum of~\eqref{eq:case3.1.1} and~\eqref{eq:case3.1.2} is positive and finishing the proof in this case.

\paragraph{Case 3.2: $k\in [N,2N-3]$.}
In this case, there are two terms without symmetric counterparts, namely, the ones corresponding to $i=N-1, j=k-N+1$ and $i=N, j=k-N$.
The coefficient in front of $x^{2N+k}$ coming from the choice of $i=N-1, j=k-N+1$~is
\begin{align*}
\binom{n}{N+1} \binom{n-1}{N-1}^2 \binom{n}{k-N+1} - \binom{n-1}{N+1} \binom{n}{N-1}^2 \binom{n-1}{k-N+1},
\end{align*}
which rewrites as
\begin{align}
&\binom{n}{N+1}\binom{n}{N-1}^2 \binom{n}{k-N+1}\bigg(\frac{(n-N+1)^2}{n^2} - \frac{(n-N-1)(n-k+N-1)}{n^2}\bigg)\nonumber\\
&=
\binom{n}{N+1}\binom{n}{N-1}^2\binom{n}{k-N+1} \frac{4-(n-N-1)(2N-4-k)}{n^2}.\label{eq:coefn1}
\end{align}
The coefficient in front of $x^{2N+k}$ coming from the choice of $i=N, j=k-N$ is
\begin{align*}
\binom{n}{N+1} \binom{n-1}{N} \binom{n-1}{N-1} \binom{n}{k-N} - \binom{n-1}{N+1} \binom{n}{N} \binom{n}{N-1} \binom{n-1}{k-N},
\end{align*}
which rewrites as
\begin{align}
&\binom{n}{N+1} \binom{n}{N} \binom{n}{N-1} \binom{n}{k-N}\bigg(\frac{(n-N)(n-N+1)}{n^2} - \frac{(n-N-1)(n-k+N)}{n^2}\bigg)\nonumber\\
& =
\binom{n}{N+1} \binom{n}{N} \binom{n}{N-1} \binom{n}{k-N} \frac{2-(n-N-1)(2N-2-k)}{n^2}.\label{eq:coefn2}
\end{align}
We now consider two subcases. If $k=2N-3$, ~\eqref{eq:coefn1} is positive and the inequality $\tbinom{n}{N-1}\tbinom{n}{N-2}\ge \tbinom{n}{N}\tbinom{n}{N-3}$ ensures that the sum of~\eqref{eq:coefn1} and~\eqref{eq:coefn2} remains positive.

Otherwise, suppose that $k\in [N,2N-4]$. Again, minimizing the expression $\tbinom{n}{i}\tbinom{n}{j}$ under the constraints $i,j\le N-2$ and $i+j=k$ requires $|i-j|$ to be largest possible (that is, $i=N-2$ and $j=k-N+2$ or vice versa). As a result, $\tbinom{n}{i}\tbinom{n}{j}\ge \binom{n}{N-2}\binom{n}{k-N+2} \ge\tbinom{n}{N-1}\binom{n}{k-N+1}\ge \tbinom{n}{N}\binom{n}{k-N}$ for all $i,j\le N-2$ with $i+j=k$.
Combining this observation with the inequality 
\begin{align}
\sum_{i,j\le N-2,\, i+j=k} \frac{2n-k}{n^2} 
&= \frac{(2N-k-3)(2n-k)}{n^2}\nonumber\\
&\ge \frac{(2N-k-3)(2n-(2N-2))}{n^2}\nonumber\\
&\ge \frac{((2N-2-k)+(2N-4-k))(n-N-1)}{n^2},\label{eq:last line}
\end{align}
ensures that the sum of the symmetric term (for $i,j\le N-2$ with $i+j=k$),~\eqref{eq:coefn1} and~\eqref{eq:coefn2} is positive, and
finishes the proof in this case.

\paragraph{Case 3.3: $k = 2N-2$.} Then, the coefficient in front of $x^{2N+k}$ in $P_2S_1-Q_2R_1$ is given by~\eqref{eq:coefn2} and is positive.

\vspace{1em}

Before we continue with the proof of \textbf{J1}--\textbf{J4}, we simplify the expressions $P_1-Q_1 = R_2-S_2$, $P_2-Q_2$ and $R_1-S_1$. First, using that, for every $\ell\ge 0$, $\binom{n-1}{\ell} = \tfrac{n-\ell}{n}\binom{n}{\ell}$, we obtain that
\begin{equation}\label{prlm:P1minusQ1}
P_1-Q_1 = R_2-S_2 = \sum_{i=0}^{N-1} \bigg(\binom{n}{N}\binom{n-1}{i} 
- \binom{n-1}{N}\binom{n}{i}\bigg) x^{N+i} = \sum_{i=0}^{N-1} \binom{n}{N}\binom{n}{i}\frac{N-i}{n}x^{N+i}.
\end{equation}

Furthermore, since $P_2$ (respectively, $Q_2$) is obtained from $P_1$ (respectively, $Q_1$)
 by replacing $N$ by $N+1$, we have
\begin{equation}\label{prlm:P2minusQ2}
  P_2-Q_2 =
  \sum_{i=0}^{N} \binom{n}{N+1}\binom{n}{i}\frac{N+1-i}{n}x^{N+i+1}.
\end{equation}

Simplifying $R_1-S_1$ follows a similar computation:
\begin{equation}\label{prlm:R1minusS1}
R_1 - S_1 = \sum_{i=0}^{N-2} \bigg(\binom{n}{N-1}\binom{n-1}{i} 
- \binom{n-1}{N-1}\binom{n}{i}\bigg) x^{N+i-1} = \sum_{i=0}^{N-2} \binom{n}{N-1}\binom{n}{i} \frac{N-1-i}{n} x^{N+i-1}.
\end{equation}

\paragraph{Showing J1.}
By \textbf{I2}, we already know that $P_2S_2\geq Q_2R_2$, so it remains to show that 
\begin{align}\label{eqn:j2prime1st}
(P_2-Q_2)B_2-(R_2-S_2)A_2 \geq 0.
\end{align}
By~\eqref{prlm:P1minusQ1} and~\eqref{prlm:P2minusQ2}, the desired inequality is equivalent to
\[\sum_{i=0}^{N}\binom{n}{N+1}\binom{n}{i}\binom{n}{N}\binom{n-1}{N}\frac{N+1-i}{n}
x^{3N+i+1} - \sum_{i=0}^{N-1}\binom{n}{N}\binom{n}{i}\binom{n}{N+1}\binom{n-1}{N+1}\frac{N-i}{n}
x^{3N+i+2}\ge 0.\]
By ignoring the summand for $i=0$ in the first sum above and changing the variable from $i\in [0,N-1]$ to $i-1\in [1,N]$ in the second sum, the last display is implied by the inequality
\[
\sum_{i=1}^{N}\bigg(\binom{n}{N+1}\binom{n}{i}\binom{n}{N}\binom{n-1}{N}-\binom{n}{N}\binom{n}{i-1}\binom{n}{N+1}\binom{n-1}{N+1}\bigg)\frac{N+1-i}{n}
x^{3N+i+1}
\geq 0,
\]
which is implied by the fact that, for every $i\in [N]$,
\[
\frac{\binom{n}{i}}{\binom{n}{i-1}}\cdot
\frac{\binom{n-1}{N}}{\binom{n-1}{N+1}}
= \frac{n-i+1}{i}\cdot \frac{N+1}{n-1-N}
\geq \frac{n-N+1}{N}\cdot\frac{N+1}{n-1-N} > 1.
\]

\paragraph{Showing J2.}
By \textbf{I2} and \textbf{I3}, we know respectively that $P_2S_2\geq Q_2R_2$ and $P_1S_2+P_2S_1\geq Q_1R_2+Q_2R_1$.
Thus, in order to prove \textbf{J3}, it suffices to show that
\[
(P_1-Q_1)B_2+(P_2-Q_2)(B_1+B_2)-(R_1-S_1)A_2-(R_2-S_2)(A_1+A_2) \geq 0.
\]
Using~\eqref{eqn:j2prime1st} and recalling that $P_1-Q_1=R_2-S_2$, it is enough to show that
\begin{align}\label{eqn:j2prime2nd} 
(P_1-Q_1)(B_2-A_1)+(P_2-Q_2)B_1-(R_1-S_1)A_2 \geq 0.
\end{align}

To prove~\eqref{eqn:j2prime2nd}, observe that combining the fact that $A_1=2B_2$,~\eqref{prlm:P1minusQ1} and a change of variable ensures that
\begin{align}
(P_1-Q_1)(B_2-A_1) = -(P_1-Q_1)B_2
&= -\sum_{i=0}^{N-1}\binom{n}{N}\binom{n}{i}\frac{N-i}{n}\binom{n}{N}\binom{n-1}{N}x^{3N+i}\nonumber\\
&= -\sum_{j=1}^{N}\binom{n}{N}\binom{n}{j-1}\frac{N+1-j}{n}\binom{n}{N}\binom{n-1}{N}x^{3N+j-1}.\label{eq:simA1B2}
\end{align}
Furthermore,~\eqref{prlm:P2minusQ2} and~\eqref{prlm:R1minusS1} ensure respectively that
\begin{align}
(P_2-Q_2)B_1  & = 2\sum_{i=0}^{N} \binom{n}{N+1}\binom{n}{i}\frac{N+1-i}{n}\binom{n}{N-1}\binom{n-1}{N-1}x^{3N+i-1},\label{eq:simA1B2+}\\
(R_1-S_1)A_2 & = \sum_{i=0}^{N-2} \binom{n}{N-1} \binom{n}{i} \frac{N-1-i}{n}\binom{n}{N+1}\binom{n-1}{N+1}x^{3N+i+1}\nonumber\\
& = \sum_{j=2}^{N} \binom{n}{N-1} \binom{n}{j-2} \frac{N+1-j}{n}\binom{n}{N+1}\binom{n-1}{N+1}x^{3N+j-1}.\label{eq:simA1B2++}
\end{align}
Henceforth, we adopt the convention that $\binom{b}{a}=0$ if $a<0$. 
Then, by combining~\eqref{eq:simA1B2},~\eqref{eq:simA1B2+} and~\eqref{eq:simA1B2++}, the coefficient in front of $x^{3N+i-1}$ in~\eqref{eqn:j2prime2nd} is non-negative if and only if
\begin{align}\label{eq:finalJ2}
2\binom{n}{N+1}\binom{n}{i}\binom{n}{N-1}\binom{n-1}{N-1} -\binom{n}{N-1}\binom{n}{i-2}\binom{n}{N+1}\binom{n-1}{N+1}
- \binom{n}{N}\binom{n}{i-1}\binom{n}{N}\binom{n-1}{N}\ge 0.
\end{align}
For $i=0$, the inequality is obvious.
For $i\in [N]$, observe that
\[
\frac{\binom{n}{i}}{\binom{n}{i-1}}
\cdot
\frac{\binom{n}{N-1}}{\binom{n}{N}}
\cdot \frac{\binom{n}{N+1}\binom{n-1}{N-1}}{\binom{n}{N}\binom{n-1}{N}}
= \frac{n-i+1}{i}\cdot\frac{N}{n-N+1}\cdot \frac{N}{N+1}
\geq \frac{N}{N+1},
\]
and
\[
\frac{\binom{n}{N-1}}{\binom{n}{N}}
\cdot\frac{\binom{n}{i-2}}{\binom{n}{i-1}}
\cdot\frac{\binom{n}{N+1}}{\binom{n}{N}}
\cdot\frac{\binom{n-1}{N+1}}{\binom{n-1}{N}}
= \frac{N}{n-N+1}\cdot\frac{i-1}{n-i+2}\cdot\frac{n-N}{N+1}\cdot\frac{n-N-1}{N+1}
\leq \frac{i-1}{N+1}\leq \frac{N-1}{N+1}.
\]
By combining the last two inequalities, it follows that the right hand side of~\eqref{eq:finalJ2} dominates
\[\bigg(\frac{2N}{N+1}-\frac{N-1}{N+1}-1\bigg)\binom{n}{N}\binom{n}{i-1}\binom{n}{N}\binom{n-1}{N} = 0,\]
which finishes the proof of \textbf{J2}.

\paragraph{Showing J3.}
By \textbf{I1} and \textbf{I3}, we know respectively that $P_1S_1\geq Q_1R_1$ and $P_1S_2+P_2S_1\geq Q_1R_2+Q_2R_1$. Thus, in order to prove \textbf{J3}, it suffices to show that
\[
(P_1-Q_1)(B_1+B_2)+(P_2-Q_2)B_1-(R_1-S_1)(A_1+A_2)-(R_2-S_2)A_1 \geq 0.
\]
By recalling that $P_1-Q_1=R_2-S_2$ and using~\eqref{eqn:j2prime2nd}, it is enough to show that
\begin{equation}\label{eq:finalsym}
(P_1-Q_1)B_1-(R_1-S_1)A_1 \geq 0.  
\end{equation}
However, this inequality follows similarly to~\eqref{eqn:j2prime1st}: indeed, note that $P_2,Q_2,R_2,S_2,A_2,B_2$ are obtained respectively from $P_1,Q_1,R_1,S_1,\frac12 A_1,\frac12 B_1$ by replacing $N$ with $N+1$.

\paragraph{Showing J4.}
This inequality follows by combining \textbf{I1} and~\eqref{eq:finalsym}. This concludes the proof of Lemma~\ref{lem:bis}.

\section{Concluding remarks}
The focus of the current paper was the rigorous analysis of a variant of LPA on the binomial random graph $\cG(n,p)$. We showed that as long as $np\ge n^{5/8+\eps}$, a.a.s.\ a unique label survives after 5 iterations of the algorithm. The proof distinguished two regimes that required the use of different techniques. In the regime $np = \Omega(n^{2/3})$, the fact that the sizes of the basins were typically at distance at least $\mathbb V(\szBsn{\ell}) = \Omega(n^{1/3})$ from each other was crucial. In this case, the surviving label was among the $O(1)$ initial ones (and if $np \gg n^{2/3}$, it is the first one). For smaller values of $p$, a finer understanding of the gap between the largest and the second largest basin was needed. In this case, a closer look at the proof shows that the surviving label is distributed over a range of $\Theta((np^3 \log(1/(np^3)))^{-1/2})$ initial labels. 

We finish with several further comments:

\begin{enumerate}
\item The last part of the proof of the second point of Theorem~\ref{thm 1} is the bottleneck of our argument when $n^{5/8} \ll np \ll n^{2/3}$. In particular, this is the place where the exponent $5/8$ appears. Nevertheless, one may improve this constant by reusing the idea from Lemmas~\ref{claim 4} and~\ref{lem level 2} ensuring the lower bound on $\mathfrak{B}_2(1)$ and $\mathfrak{B}_2(\ell_1)$, respectively. More precisely, one may similarly define a set of labels $[k_1]$ such that no vertex in $C$ carries label in $[k]\setminus [k_1]$ after the third round. Then, partition Level 3 into two sets: $C_2([k_1])$ and $S = C\setminus C_2([k_1])$. By designing a suitable alternative procedure exposing only edges in $C$ incident to $C_2([k_1])$, we find a label $\ell_2\in [k_1]$ (most likely different from $\ell_1$) that appears most often. As will turn out, $\mathfrak{C}_2([k_1])\gg \mathfrak{B}$ by the choice of $k_1$, so the difference between $\mathfrak{C}_3(\ell_2)$ and $\mathfrak{C}_3(\ell)$ (for $\ell\in [k_1]\setminus \ell_2$) will grow larger compared to $\mathfrak{C}_2(\ell_1) - \mathfrak{C}_2(\ell)$. Thus, for suitably large $p$, we may similarly show that after $4$ rounds, label $\ell_2$ is carried by $n-o(n)$ vertices in $S$. In fact, this argument can also be bootstrapped: if the differences in size between $S(\ell_2)$ and $S(\ell)$ are still small, one may look for an integer $k_2$ such that the largest set after the fourth round has label in $[k_2]$. In that case, partition $S$ into $S([k_2])$ and its complement, and explore the edges incident to $S([k_2])$ before exploring the rest. As the formal proof of this additional step would increase the technicality of the paper without contributing new ideas, we omit the details. It is not clear (to us) how much the lower bound on $np$ could be improved this way; at some point, we expect other bottlenecks to appear as well.
\item As mentioned in the introduction, empirical evidence reported in~\cite{KLPSS19,PhDThesis} suggests that the behavior of the label propagation algorithm on $\mathcal G(n,p)$ exhibits a threshold behavior around~$np = n^{1/5}$. The same article~\cite{KLPSS19} shows that there is an $\epsilon>0$ such that when $\log(n) \ll np\leq n^\epsilon$ the algorithm terminates with $\Omega((np)^{3})$ label classes, each of size $O(n/(np)^3)$. We hope that the insights on which our contribution relies might also help estimating the range of values of $\epsilon$ for which the claim still holds.
\item We showed that when $np=c n^{2/3}$, the a.a.s.\ unique label that survives after 5 rounds is a tight random variable. In fact, with a little bit of extra work, one could show that this label is distributed as the index of the maximum of $(N_i-c(i-1))_{i\ge 1}$ where $(N_i)_{i\ge 1}$ is a sequence of i.i.d.\ normal variables of expectation $0$ and variance~$1$.
\end{enumerate}

\paragraph{Acknowledgements.}
The authors thank Ravi Sundaram for calling to their attention the lack of a complete mathematically rigorous understanding of label propagation algorithms, Yoshiharu Kohayakawa for referring us to~\cite{KLPSS19}, and an anonymous referee for carefully inspecting the entire paper and for pointing out an important mistake in a previous version.



\end{document}